\documentclass[a4paper,reqno]{amsart}
\usepackage{amscd,amsfonts,amssymb,amsmath,amsthm,bbm}
\usepackage{graphicx}
\usepackage[utf8]{inputenc}
\usepackage[english]{babel}

\usepackage[colorlinks=true,linkcolor=blue,citecolor=blue,urlcolor=blue,breaklinks]{hyperref}

\usepackage{mathrsfs}
\usepackage{latexsym}
\usepackage{enumerate}
\usepackage{amsopn}
\usepackage{amsbsy}
\usepackage{amscd,indentfirst,epsfig}

\usepackage{bbold} % Double-stroke one.

\usepackage{fullpage}

\hypersetup{pdftitle={Entropy dissipation estimates for the linear Boltzmann operator}}
\hypersetup{pdfauthor={M. Bisi, J. A. Ca\~{n}izo and B. Lods}}

% \parindent=5mm
% \setlength{\oddsidemargin}{.5cm} \setlength{\evensidemargin}{.5cm}
% \setlength{\textwidth}{16.0cm} \setlength{\textheight}{19.5cm}
% \setlength{\topmargin}{1cm} \baselineskip = 18pt
% \baselineskip=2\baselineskip \font\dc=cmbxti10

% Bibliography
%-----------------------------------------------------------------

\usepackage{breakurl}
\usepackage{natbib}
\usepackage{url}
\usepackage{color}

%%%%%%%%%%%%%%%%%%%%%%%%%%%%%%%%%%%%%%%%%%%%%%%%%%%%%%%%%%%%%%%%%%%%%%%%%%%%%
%                                                                           %
%                       Theorems                                            %
%                                                                           %
%%%%%%%%%%%%%%%%%%%%%%%%%%%%%%%%%%%%%%%%%%%%%%%%%%%%%%%%%%%%%%%%%%%%%%%%%%%%%
%_________________________________________________________________________
%                    COUNTERS:

\theoremstyle{plain}
\newtheorem{theo}{Theorem}[section]
\newtheorem{lemme}[theo]{Lemma}

\newtheorem{propo}[theo]{Proposition}

\newtheorem{nb}[theo]{Remark}

\theoremstyle{definition}
\newtheorem{exa}[theo]{Example}

\def \leq {\leqslant}
\def \geq {\geqslant}

\numberwithin{equation}{section}
\def\ind#1{\lower5pt\hbox{$\scriptstyle #1$}}
%\renewcommand{\thefootnote}{\fnsymbol{footnote}}

%%%%%%%%%%%%%%%%%%%%%%%%%%%%%%%%%%%%%%%%%%%%%%%%%%%%%%%%%%%%%%%%%%%%%%%%%%%%%%%%%
%                                                                               %
%                       MACROS                                                  %
%                                                                               %
%%%%%%%%%%%%%%%%%%%%%%%%%%%%%%%%%%%%%%%%%%%%%%%%%%%%%%%%%%%%%%%%%%%%%%%%%%%%%%%%

\def \d {\,\mathrm{d}}
\def \ddt {\frac{\mathrm{d}}{\mathrm{d}t}}
\def \L {\mathcal{L}}

\def \H {\mathcal{H}}

\def \ds {\displaystyle}

\def \D {\mathcal{D}}

\def \ds {\displaystyle}
\def\Q {\mathcal{Q}}

\def\R{{\mathbb R}}

\def \S {{\mathbb S}^{d-1}}
\def \q {q}
\def \n {n}

\def \v {{v}}

\def \vb {\v_*}

\def \M {{M}}
\def \p {\partial}
\def \dis {\displaystyle}

\def\1{\mathbb{1}}

%%%%%%%%%%%%%%%%%%%%%%%%%%INTEGRAL

\def \It {\int_{\R^d} \int_{\S}}
\def \IS {\int_{\S}}
\def \IR {\int_{\R^d}}
\def \ird {\int_{\R^d}}
\def \ir3 {\int_{\R^3}}

%%%%%%%%%%%%%%%%%%%%%%%%%%%%%%%%%%%%%%%%%%%%

%\renewcommand{\thefootnote}{\fn{footnote}}

%%%%%%%%%%%%%%%%%%%%%%%%%%%%%%%%%%%%%%%%%%%%%%%%%%%%%%%%%%%%%%%
%                                                             %
%               Title                                         %
%                                                             %
%%%%%%%%%%%%%%%%%%%%%%%%%%%%%%%%%%%%%%%%%%%%%%%%%%%%%%%%%%%%%%%

%%%%%%%%%%%%%%%%%%%%%% Document %%%%%%%%%%%%%%%%%%%%%%%%%%%%%%%%%

\title[Entropy dissipation for the linear Boltzmann equation]
{Entropy dissipation estimates for the linear Boltzmann operator}

\author{Marzia Bisi}

\address{\textbf{Marzia Bisi}. Dipartimento di Matematica e Informatica, Universit\`{a} di Parma,
  Parco Area delle Scienze 53/A, 43124 Parma, Italy}
\email{marzia.bisi@unipr.it}

\author{Jos\'e A.~Ca\~nizo}

\address{\textbf{Jos\'{e} A.~Ca\~{n}izo}. School of Mathematics,
  University of Birmingham, Edgbaston, Birmingham B15 2TT, UK}
\email{j.a.canizo@bham.ac.uk}

\author{Bertrand Lods}
\address{\textbf{Bertrand Lods}. Universit\`{a} degli
  Studi di Torino \& Collegio Carlo Alberto, Department of Economics and
  Statistics, Corso Unione Sovietica, 218/bis, 10134 Torino, Italy.}
\email{lodsbe@gmail.com}

\hyphenation{bounda-ry rea-so-na-ble be-ha-vior pro-per-ties
cha-rac-te-ris-tic  coer-ci-vity}
\begin{document}

\begin{abstract}
  We prove a linear inequality between the entropy and entropy
  dissipation functionals for the linear Boltzmann operator (with a
  Maxwellian equilibrium background). This provides a positive answer
  to the analogue of Cercignani's conjecture for this linear collision
  operator. Our result covers the physically relevant case of
  hard-spheres interactions as well as Maxwellian kernels, both with
  and without a cut-off assumption. For Maxwellian kernels, the proof
  of the inequality is surprisingly simple and relies on a general
  estimate of the entropy of the gain operator due to \cite{MT12,
    fisher}. For more general kernels, the proof relies on a
  comparison principle. Finally, we also show that in the grazing
  collision limit our results allow to recover known logarithmic
  Sobolev inequalities.
\end{abstract}

\maketitle

\tableofcontents

\section{Introduction}
\setcounter{equation}{0}

\subsection{Setting of the problem and main result}

The use of Lyapunov functionals is a well-known technique to study the
asymptotic behavior of dynamical systems, and in the theory of the
Boltzmann equation and related models it is now a classical tool. For
the nonlinear, spatially homogeneous Boltzmann equation
\begin{equation}
  \label{eq:nonlinear-Boltzmann}
  \partial_t f = \Q(f,f), \qquad f(0,v)=f_0(v),
  \qquad v \in \R^d, \,t \geq 0,
\end{equation}
posed for a function $f = f(t,v)$ depending on $t \geq 0$ and $v \in
\R^d$, it is a well-known fact that $f(t,v)$ converges (as $t\to
\infty$) towards the Maxwellian distribution $M_f$ with same mass,
momentum and energy as $f_0$,
\begin{equation*}
  M_f(v) = \dfrac{\varrho_f}{(2\pi\,E_f)^{d/2}}
  \exp\left(-\frac{|v-\mathbf{u}_f|^2}{2\,E_f}\right),
  \qquad v \in \R^d,
\end{equation*}
where
\begin{equation*}
  \left.
  \begin{aligned}
    \varrho_f &= \int_{\R^d} f(t,v)\d v = \int_{\R^d} f_0(v)\d v,
    \\
    \varrho_f \mathbf{u}_f &= \int_{\R^d} f(t,v) v\d v=\int_{\R^d}
    f_0(v)v \d v,
    \\
    \quad d\, \varrho_f E_f &= \int_{\R^d} f(t,v)|v-\mathbf{u}_f|^2\d
    v=\int_{\R^d} f_0(v)\,|v-\mathbf{u}_f|^2\d v
  \end{aligned}
  \quad
  \right\}
  \qquad \text{ for all $t \geq 0$.}
\end{equation*}
Notice that eq.~\eqref{eq:nonlinear-Boltzmann} conserves density,
momentum and kinetic energy which explains why the above quantities
$\varrho_f$, $\mathbf{u}_f$ and $E_f$ are constant in time.
% (we assume particle mass $m=1$)
The \emph{Shannon-Boltzmann relative entropy} of $f$ with respect to
the Maxwellian distribution $M_f$
\begin{equation}
  \label{eq:relative-entropy}
  \H(f|M_f) := \ird f(v) \log \frac{f(v)}{M_f(v)} \d v
\end{equation}
is a Lyapunov functional, that is, it is decreasing along solutions to
\eqref{eq:nonlinear-Boltzmann}: if $f = f(t,v)$ solves
\eqref{eq:nonlinear-Boltzmann},
\begin{equation}
  \label{eq:D-nl-Boltzmann}
  \ddt \H(f|M_f) = -\D(f) \leq 0,
\end{equation}
where the functional $\D$ is called the \emph{entropy dissipation}.
The question of whether one can find a functional inequality between
$\H$ and $\D$ of the form
\begin{equation*}
  \D(f) \geq \lambda \,\mathbf{\Phi}(\H(f|M_f))
\end{equation*}
valid for some $\lambda > 0$, some nondecreasing continuous function
$\mathbf{\Phi}\;:\;[0,+\infty) \to [0,+\infty)$ with $\mathbf{\Phi}(0)=0$, and all functions
$f$ (with $f$ possibly satisfying some additional suitable bounds), is
generally known as \emph{Cercignani's conjecture}. It has several
variants and a long history (e.g. \citet{Carlen94,ToVi,Vil03}; see the
recent review by \cite{DMV11} for further details). If true, this
inequality gives a lot of information on the asymptotic behavior of
\eqref{eq:nonlinear-Boltzmann}, since then one obtains the
differential inequality
\begin{equation*}
  \ddt \H(f(t)|M_f) \leq -\lambda \, \mathbf{\Phi}(\H(f(t)|M_f)),
\end{equation*}
from which one can deduce that $\H(f(t)|M_f)$ converges to $0$ as
$t \to +\infty$, with an explicit rate. Notice that due to the
Csisz\'ar-Kullback-Pinsker inequality the convergence of
$\H(f(t)|M_f)$ towards $0$ implies the convergence in $L^1(\R^d)$ of
$f(t,v)$ towards $M_f$. Unfortunately, the available versions of
Cercignani's conjecture do not yield an optimal rate of convergence of
$f(t,v)$ towards $M_f$. However, the use of this Lyapunov functional
approach combined with a careful spectral analysis of the linearized
Boltzmann operator allow to recover an exponential convergence to
equilibrium \citep{M06}.  \medskip

We are interested in studying the corresponding conjecture in the case
of the \emph{linear} Boltzmann equation which, though simpler, has not
yet been settled.  Let us describe the model in more detail before
explaining our results. The homogeneous, linear Boltzmann equation is
given by
\begin{equation}
  \label{eq:linear-Boltzmann}
  \partial_t f = \Q(f,\M) = \L f,
  \qquad f(0,v) = f_0(v),
  \qquad t \geq 0,\, v \in \R^d,
\end{equation}
where $\Q$ is the bilinear Boltzmann operator,
\begin{equation}
  \label{bolt}
  \Q(f,g)
  =
  \It B(|q|, \xi) \bigg(
    f(\v')g(\vb') - f(\v)g(\vb)
  \bigg)\d\vb \d\n.
\end{equation}
Here $\q=\v-\vb$ is the relative velocity, $\xi = |q \cdot n| / |q|$,
and $\v'$ and $\vb'$ are the pre-collisional velocities which result,
respectively, in $ \v $ and $\vb$ after the elastic collision
\begin{equation}
    \label{vprime}
  \v'=v-(\q \cdot \n)\n, \qquad \vb'=\vb+(\q \cdot \n)\n.
\end{equation}
The particle distributions $f$ and $g$ are nonnegative functions of
the velocity variable $\v \in\R^d$ and $B(|q|, \xi)$ is a nonnegative
function usually called the \emph{collision kernel}. We will assume
throughout this paper that the function $\M$ appearing in
\eqref{eq:linear-Boltzmann} is a given normalized Maxwellian
distribution with unit mass:
\begin{equation}\label{maxwe1}
  \M(\v)=\bigg({2\pi \theta}\bigg)^{-d/2}\exp
  \left(-\dfrac{|\v-u_0|^2}{2\theta}\right), \qquad \qquad \v
  \in \R^d,
\end{equation}
where $u_0 \in \R^d$ is the \emph{bulk velocity} and $\theta > 0$ is
the \emph{effective temperature}. We notice that the normalization of
$\M$ is not a loss of generality since a time scaling of
\eqref{eq:linear-Boltzmann} easily translates into results for
non-normalized Maxwellians. Similarly, since
\eqref{eq:linear-Boltzmann} is linear, for simplicity we will assume
throughout that the solution $f$ also has mass $1$:
$$\int_{\R^{d}}f(t,v)\d v=\int_{\R^{d}}f_{0}(v)\d v=1, \qquad t \geq 0.$$ Galilean
invariance and a scaling in $v$ also easily show that one may study
only the case $\theta = 1$, $u_0 = 0$. However, we will state all results
for \eqref{maxwe1} in order to make clear how inequalities depend on
them.

We shall investigate in this paper collision operators $\L=\L_B$
corresponding to various collision kernels $B=B(|q|,\xi)$ but shall
most often deal with kernels that factor as
\begin{equation}
  \label{eq:Bqxi}
  B(|q|,\xi)=\beta(|q|)  \,b(\xi)
\end{equation}
for some measurable nonnegative mappings $b:[0,1] \to [0,\infty)$ and
$\beta(\cdot) : [0,\infty) \to [0,\infty)$. For the purposes of proofs
we always work with the cut-off assumption that
\begin{equation}
  \label{eq:cutoff}
  \IS b(\tilde{q} \cdot n) \d n < +\infty,
\end{equation}
(where $\tilde{q}= q/|q|$), though our results apply also to
non-cutoff kernels (just because the entropy dissipation is larger in
that case; see Remark \ref{rem:non-cutoff}). We always deal with
\textit{hard potential interactions}, that is, collision kernels with
$\beta$ nondecreasing.\footnote{Notice that, for collision kernel of
  the above shape, if $\beta(\cdot)$ is such that
  $\liminf_{r \to \infty}\beta(r)=0$, the convergence towards
  equilibrium is not expected to be exponential (for instance, the
  spectrum of $\L$ in the $L^2$ space with weight $\M^{-1}$ does not
  have a spectral gap)} In particular, we will deal with
\begin{equation}
  \label{eq:hard-potential-kernel}
  B(|q|,\xi) = c_d |q|^\gamma \xi^{d-2},
\end{equation}
for $\gamma \geq 0$ (with $c_d$ a normalization constant). In dimension $d=3$, the case
$\gamma = 1$ is the case of \textit{hard-spheres interactions}, while
the $\gamma = 0$ corresponds to the \textit{Maxwell molecules
  interaction}, that is,
\begin{align}
  \label{eq:hard-sphere-kernel}
  &B(|q|,\xi) = B_{\mathrm{hs}}(|q|,\xi)
  = c_d |\q \cdot \n| = c_d |q| \xi
  \qquad &&\text{(Hard-spheres),}
  \\
  \label{eq:Maxwell-kernel}
  &B(|q|,\xi) =
  B_{\mathrm{max}}(|q|,\xi) =
  c_d \frac{|q \cdot \n|}{|q|} = c_d \xi
  \qquad &&\text{(Maxwell molecules).}
\end{align}
We will also deal with general \emph{Maxwellian collision kernels},
that is, kernels which depend only on $\xi$:
\begin{equation}
  \label{eq:maxwellian-kernel}
  B(|q|,\xi) = b(\xi)
\end{equation}
for some measurable function $b:[0,1] \to [0,+\infty)$.  (The Maxwell
molecules approximation \eqref{eq:Maxwell-kernel} being a particular
case.) We say a Maxwellian collision kernel is \emph{normalized} when,
for any $\tilde{q} \in \S$,
\begin{equation}
  \label{eq:normalized}
  \IS b(\tilde{q} \cdot n) \d n
  = |\S| \int_{0}^1 b(\xi) (1-\xi^2)^{\frac{d-3}{2}}\d \xi = 1,
\end{equation}
where $|\S|$ represents the $(d-1)$-dimensional volume of $\S$.

Equation \eqref{eq:linear-Boltzmann} is sometimes known also as the
\emph{scattering} equation, and can be interpreted as giving the time
evolution of the velocity distribution of a cloud of particles,
homogeneously distributed in space. These particles do not interact
among themselves, but only with background particles whose
distribution is given by $\M$, considered as a thermal bath in the
sense that it remains unchanged even after interaction with the cloud
of particles (this is reasonable if, for example, the total mass of
the cloud of particles is much smaller than that of the background).
Equation \eqref{eq:linear-Boltzmann} conserves density (i.e., $\ird
f(t,v) \d v = \ird f(0,v) \d v$ for all $t$), but in contrast with the
nonlinear Boltzmann equation, momentum and kinetic energy are not
conserved due to the interaction with the background. Except in the
special case of \emph{Maxwellian molecules}, no explicitly solvable
differential equations can be derived for the evolution of the
momentum and the kinetic energy. (For the explicit time evolution of
momentum, energy and temperature in the case of Maxwell molecules see
for example \citet{ST04}.)

It will be sometimes convenient to express the collision operator $\L$
in the following weak form:
\begin{equation}
  \label{weak}
  \ird \psi(v) \L f(v) \d v
  =
  \int _{\R^d} \int_{\R^d} \int_{\S} B(|q|,\xi)
  f(\v){\M}(\vb)\big(\psi(\v')-\psi(\v)\big)\d
  v \d \vb \d \n
\end{equation}
for any sufficiently regular $\psi$. On the other hand, $\L$ can also
be written in the form
\begin{equation}
  \label{eq:linear-Boltzmann-kernel-form}
  \L f(v) = \IR k_B(w,v)f(w)\d w -\sigma_B(v)f(v),
  \qquad v \in \R^d
\end{equation}
for a kernel $k_B(v,w) \geq 0$ which depends of course on the
collision kernel $B$ (see for instance \cite{Carleman1957}), and with
\begin{equation*}
  \sigma_B(v) = \int_{\R^d} k_B(v,w) \d w,
  \qquad v \in \R^d.
\end{equation*}
The kernel $k_B$ can be written explicitly in some cases; see
e.g. \citet{ArLo}. For a general expression of $k_B$ see the
discussion leading to equation \eqref{eq:kB}. One sees then that
eq.~\eqref{eq:linear-Boltzmann} is the Kolmogorov forward equation for
a Markov process on $\R^d$ with invariant measure, or equilibrium,
$\M$ (notice that $\L(\M) = \Q(\M,\M) = 0$ regardless of the collision
kernel $B$), and it is well known that the relative entropy
\eqref{eq:relative-entropy} with respect to the equilibrium is a
Lyapunov functional for any equation of this type (see for example
\citet{Chafai04} or \citet{Michel}.) In addition, $\L$ satisfies the
\emph{detailed balance} condition, that is,
\begin{equation}
  \label{eq:detailed-balance}
  \M(v)k_B(v,w) = \M(w)k_B(w,v),
  \qquad v,w \in \R^d,
\end{equation}
which translates to the fact that $\L$ is symmetric in $L^2(\R^d,
M(v)^{-1} \d v)$.  Using this, we can explicitly write the time
derivative of $\H(f|\M)$ along solutions to
\eqref{eq:linear-Boltzmann}:
\begin{equation}
  \label{eq:dtH-linear-Boltz}
  \ddt \H(f(t)|\M)
  =
  \ird \L f(t,v) \log\left(\frac{f(t,v)}{\M(v)}\right) \d v
  = -\D(f(t))
\end{equation}
where the \emph{entropy dissipation} $\D(f)$ is
\begin{equation}
  \label{eq:D-linear-Boltz}
  \D(f) := \frac{1}{2} \ird \ird \int_{\S}
  B(|q|,\xi) \M(v)\M(v_*)
  \Psi\left( \frac{f(v)}{\M(v)}, \frac{f(v')}{\M(v')} \right)
  \d n \d v_* \d v,
\end{equation}
with $\Psi(x,y) := (x-y)(\log x - \log y) \geq 0$. Alternatively, we
can also write
\begin{equation}
  \label{eq:D-linear-Boltz-2}
  \D(f) := \frac{1}{2} \ird \ird
  \M(v) k_B(v,v')
  \Psi\left( \frac{f(v)}{\M(v)}, \frac{f(v')}{\M(v')} \right)
  \d v \d v',
\end{equation}
where $k_B$ is the kernel appearing in
\eqref{eq:linear-Boltzmann-kernel-form}.

It is interesting then to look for inequalities of the form $\D(f) \geq
\lambda \H(f|\M)$, for some $\lambda > 0$, since clearly this implies
that any solution $f$ to \eqref{eq:linear-Boltzmann} with mass $1$
satisfies
\begin{equation*}
  \H(f(t)|\M) \leq \H(f_0|\M) \, \exp\left(-\lambda\,t\right)
  \qquad \forall t \geq 0
\end{equation*}
yielding exponential convergence to the equilibrium $\M$ in the
entropic sense (notice that our Maxwellian $\M$ was also normalized to
have mass $1$). The following is our main result regarding this:
\begin{theo}
  \label{main}
  Let $\D$ be the entropy dissipation functional
  \eqref{eq:D-linear-Boltz} and consider either a hard-potential collision
  kernel $B$ of the form \eqref{eq:hard-potential-kernel} with $\gamma
  > 0$, or any normalized Maxwellian collision kernel (i.e.,
  satisfying \eqref{eq:maxwellian-kernel} and \eqref{eq:normalized}).
  There exists a positive constant $\lambda=\lambda(B) > 0$ such that
  \begin{equation}
    \label{entropydiss}
    \D(f) \geq \lambda \, \H(f|\M)
  \end{equation}
  holds for any probability distribution $f \in L^1(\R^d)$.

  If the collision kernel is Maxwellian then one may take
  \begin{equation}
    \label{eq:gammab}
    \lambda
    =
    \gamma_b
    := \int_{\S} (\tilde{q} \cdot n)^2 b(\tilde{q} \cdot n) \d n
    = |\S| \int_{0}^1 \xi^2 b(\xi) (1-\xi^2)^{\frac{d-3}{2}}\d \xi \in (0,1),
    \qquad \tilde{q} \in \S.
  \end{equation}
 Notice that the value of $\gamma_b$ does not depend on $\tilde{q}$
  due to radial symmetry, and is a number strictly between $0$ and $1$
  due to normalization.
\end{theo}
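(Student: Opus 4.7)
The plan is to handle the two classes of kernels separately, with the hard-potential case reduced to the Maxwellian one. For a normalized Maxwellian kernel, the crucial structural simplification is that the collision frequency is constant, $\sigma_B\equiv 1$, so that $\L f=\Q^+(f,\M)-f$ with the gain part $\Q^+(f,\M)(v):=\iint B(|q|,\xi)\,f(v')\M(v_*')\,\d v_*\,\d n$. Computing $\D(f)=-\int_{\R^d}\L f\,\log(f/\M)\,\d v$ from this decomposition gives the identity
\begin{equation*}
  \D(f) \;=\; \H(f|\M) \;-\; \int_{\R^d}\Q^+(f,\M)(v)\,\log\!\Big(\frac{f(v)}{\M(v)}\Big)\,\d v.
\end{equation*}
The measure-preserving change of variables $(v,v_*)\mapsto(v',v_*')$, combined with the identity $\M(v)\M(v_*)=\M(v')\M(v_*')$ and the normalization $\int_{\S}b(\tilde q\cdot n)\,\d n=1$, shows that $\Q^+(f,\M)$ is again a probability density. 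Gibbs' inequality then yields $\int_{\R^d}\Q^+(f,\M)\,\log(f/\M)\,\d v\leq\H(\Q^+(f,\M)\,|\,\M)$, so that
\begin{equation*}
  \D(f) \;\geq\; \H(f|\M)\;-\;\H(\Q^+(f,\M)\,|\,\M).
\end{equation*}

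The crux of the Maxwellian part of the theorem is then the entropy-contraction estimate for the gain operator,
\begin{equation*}
  \H(\Q^+(f,\M)\,|\,\M) \;\leq\; (1-\gamma_b)\,\H(f|\M),
\end{equation*}
which is the content of the bound obtained in \cite{MT12,fisher} specialized to our linear gain operator (the Maxwellian background contributes zero entropy on the right-hand side). Since $\xi\in[0,1]$ and $b$ is normalized, the constant $\gamma_b$ defined in \eqref{eq:gammab} lies in $(0,1)$, and inserting this into the previous display yields the sharp inequality $\D(f)\geq\gamma_b\,\H(f|\M)$ announced in the theorem.

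For the hard-potential kernel $B(|q|,\xi)=c_d|q|^\gamma\xi^{d-2}$ with $\gamma>0$, the Maxwellian argument breaks down since $\sigma_B(v)$ is no longer constant and $\Q^+_B(f,\M)$ is not a probability density. My plan is to deploy a comparison principle between $\D_B$ and the Maxwellian dissipation: on the set $\{|q|\geq R\}$ the bound $|q|^\gamma\geq R^\gamma$ allows one to control the corresponding piece of $\D_B$ from below by $R^\gamma$ times the integrand of a truncated Maxwellian entropy dissipation, to which the previous paragraph applies. The complementary small-$|q|$ region is the genuine obstacle, as $|q|^\gamma$ degenerates there and no pointwise comparison with a Maxwellian integrand is available; I expect this region to be absorbable by exploiting the growth $\sigma_B(v)\sim(1+|v|)^\gamma$ at infinity together with the Gaussian decay of $\M(v_*)$ in the integrand, through a splitting finely matched to the tails of $\M$ so that the Maxwellian inequality of the previous step delivers the desired hard-potential bound with some (non-sharp) $\lambda=\lambda(B)>0$.
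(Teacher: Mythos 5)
Your treatment of the Maxwellian case is essentially the paper's own proof: the same decomposition $\L f=\L_+f-f$, the same identity $\D(f)=\H(f|\M)-\H(\L_+f|\M)+\H(\L_+f|f)$ with the last term discarded by nonnegativity of relative entropy between probability densities, and the same appeal to the Fisher-information/entropy contraction of \cite{MT12,fisher} for the gain operator. One caveat: the inequality you quote from those references is for the \emph{absolute} Shannon entropy, $H(\Q_+(f,g))\leq(1-\gamma_b)H(f)+\gamma_b H(g)$, and passing to the \emph{relative} entropy $\H(\L_+f|\M)\leq(1-\gamma_b)\H(f|\M)$ is not just a matter of the background "contributing zero entropy"; it requires controlling $\int\L_+f\,\log\M\,\d v$, i.e.\ the second moment of $\L_+f$. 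The paper does this via the explicit identity $\int\L f\,|v-u_0|^2\d v=-\gamma_b\int(f-\M)|v-u_0|^2\d v$ (Lemmas \ref{lem:max} and \ref{lem:energy-contraction-lemma}); the computation does work out, but it is a step you need to carry out, not a triviality.

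The hard-potential case is where your proposal has a genuine gap. Your splitting of the dissipation integral at $\{|q|\geq R\}$ versus $\{|q|<R\}$ does not close, for two reasons. First, on the large-$|q|$ region the bound $|q|^\gamma\geq R^\gamma$ only compares $\D_B$ to the dissipation of the \emph{truncated} kernel $b(\xi)\1_{\{|q|\geq R\}}$, which is not a Maxwellian kernel (it depends on $|q|$), so Theorem \ref{theo:max} does not apply to it; you would still need to show that the truncated Maxwellian dissipation controls the full one, which is a comparison problem of the same difficulty as the one you started with. Second, for the small-$|q|$ region you explicitly offer only the expectation that it can be "absorbed" via the growth of $\sigma_B$ and Gaussian decay — this is a hope, not an argument, and it is precisely the hard part. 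The paper's resolution is structurally different: it does \emph{not} compare integrands pointwise in $(v,v_*,n)$, but instead uses Carleman's representation to write the jump kernel $k_B(v',v)$ as an integral of $B$ against $\M$ over the hyperplane $E_{v,v'}$, and proves the pointwise kernel bound $k_B(v',v)\geq C_\theta\,k_{\tilde B}(v',v)$ (Proposition \ref{lem:kernel-comparison}). The degeneracy of $|q|^\gamma$ at small relative velocity is tamed by the $(d-1)$-dimensional integration over the hyperplane with Gaussian weight: in the coordinates of the proof, $|q|^2=|\bar v-\bar v'_*|^2+s^2$, and the resulting convolution inequality \eqref{convol} is verified for $\beta(|q|)=|q|^\gamma$ by a local-versus-far-field splitting in $\bar v'_*$ in which the would-be singularity $|\bar v-\bar v'_*|^{2-d}$ is integrable in dimension $d-1$. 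Without some such averaging mechanism, a comparison performed directly in the original collision variables does not go through, so this part of your proof is missing its key idea.
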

\begin{nb}
  \label{rem:non-cutoff}
  Notice that our results actually cover non cut-off kernels for which
  \eqref{eq:cutoff} is not satisfied, as long as they can be bounded
  below by a collision kernel to which the above theorem
  applies. Indeed, if $b\::\:[0,1] \to \R^{+}$ is such that
  $\IS b(\tilde{q} \cdot n) \d n = +\infty$ (remember this integral
  does not depend on $\tilde{q} \in \S$) then removing the
  singularities, we can bound $b$ from below by some measurable
  $b_{0}\::\:[0,1] \to \R^{+}$ satisfying
  $$\IS b_{0}(\tilde{q} \cdot n) \d n < +\infty, \qquad \tilde{q} \in \S.$$
  Since, as one sees from \eqref{eq:D-linear-Boltz}, the \emph{entropy
    dissipation} functional is monotone with respect to the collision
  kernel (while the relative entropy is obviously independent of the
  collision mechanism!), the result obtained for the cut-off kernel
  $b_{0}$ applies to the original kernel $b$. It is however likely
  that the obtained bound is far from being optimal.
\end{nb}
Also, for the hard spheres kernel \eqref{eq:hard-sphere-kernel} in
dimension $d=3$ we may take $\lambda = \sqrt{\theta}/4$ (see
Example \ref{exa:hard-spheres}). In fact, we are able to give a
general condition on $B$ ensuring that inequality \eqref{entropydiss}
holds; see Theorem \ref{thm:main-general}. This inequality is part of
a larger family of inequalities relating other Lyapunov functionals of
\eqref{eq:linear-Boltzmann} to their dissipations (see section
\ref{sec:comparison}), of which a prominent example is the
\emph{spectral gap inequality}
\begin{equation}
  \label{eq:spectral-gap-ineq}
  \ird \ird \int_{\S}
  B(|q|,\xi) \M(v)\M(v_*)
  \left( \frac{f(v)}{\M(v)}- \frac{f(v')}{\M(v')}\right)^2
  \d n \d v_* \d v
  \geq
  \frac{\lambda_2}{2}
  \ird \M \left( \frac{f}{\M} - 1 \right)^2 \d v.
\end{equation}
This inequality was already studied in \citet{LoMo}, and it implies
exponential relaxation to equilibrium in the $L^2$ norm with weight
$\M^{-1}$ for equation \eqref{eq:linear-Boltzmann}. However, it gives
a different information from inequality \eqref{entropydiss}, since
convergence is given in a different distance: the spectral gap result
gives convergence in a stronger topology, but also requires the
initial condition to have stronger decay for large $v$. Of course, the
best possible constants $\lambda_2$ and $\lambda$ may be different as
well (always with $\lambda \leq \lambda_2/2$, see \citet{Ane,
  Bakry2014}) implying different exponential relaxation speeds.

\subsection{Link to Logarithmic Sobolev inequalities}
In addition to being fundamental in the study of the asymptotic
behavior of \eqref{eq:linear-Boltzmann}, entropy dissipation
inequalities of the form of \eqref{entropydiss} have interesting links
to results in the theory of Markov processes and have been the subject
of several recent studies in discrete settings. In the framework of
discrete, time-continuous Markov processes the study of inequalities
such as \eqref{entropydiss} is relatively recent. They are often
referred to as a type of ``\textit{modified logarithmic Sobolev
  inequalities}'' in this context; see \citet{BT06, Bakry2014} for recent results
and a summary of related literature. Comparatively, entropy
dissipation inequalities for continuous-space processes have been
little studied, so it is interesting to see whether more general
techniques can be developed for them. The idea of studying the
convergence to equilibrium of a Markov process in terms of the
relative entropy to the invariant measure is in fact much older, but
it has usually been done by means of logarithmic Sobolev inequalities
instead of \eqref{entropydiss}. For our linear operator $\L$, this
would be an inequality of the form
\begin{equation}
  \label{eq:log-sob-L}
  \mathscr{E}\left( \sqrt{\M} \sqrt{f} \right)
  \geq \lambda_0 H(f|\M)
\end{equation}
for some $\lambda_0 > 0$ and all probability distributions $f$ in
$\R^d$, where $\mathscr{E}$ is the Dirichlet  form associated to $\L$:
\begin{equation*}
  \mathscr{E} (g) := -\ird   g(v) \L g(v) \,M^{-1}(v)\d v.
\end{equation*}
This approach is followed, for example, in \citet{DS96}. Though it is
written there for discrete models, one can easily follow the same
arguments here in order to see that \eqref{eq:log-sob-L} would imply
\eqref{entropydiss} with $\lambda = \lambda_0$. The interesting
problem with this approach is that for our continuous model the
logarithmic Sobolev inequality \eqref{eq:log-sob-L} \emph{cannot
  hold}. The reason for this is that, as is well-known
\citep{Gross1975Logarithmic, Gross1993Logarithmic}, the log-Sobolev
inequality \eqref{eq:log-sob-L} is equivalent to an $L^q-L^p$
regularizing property of solutions of equation
\eqref{eq:linear-Boltzmann}, known as \textit{hypercontractivity}
which does not hold for solutions to \eqref{eq:linear-Boltzmann} (see
a quick proof of this fact in Appendix \ref{sec:no-log-sob}). Hence we
have that
\begin{theo}\label{theo12}
  Under the cut-off assumption \eqref{eq:cutoff}, there is no constant
  $\lambda_0 > 0$ such that inequality \eqref{eq:log-sob-L} holds for
  all probability distributions $f$ in $\R^d$.
\end{theo}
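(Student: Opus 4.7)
My plan is to argue by contradiction using Gross's equivalence between logarithmic Sobolev inequalities and hypercontractivity. Assume that \eqref{eq:log-sob-L} holds with some $\lambda_0>0$ and set $\phi := f/\M$. Then $\phi$ evolves under the conjugated generator $\tilde{\L}\phi := \L(\M\phi)/\M$. The detailed balance \eqref{eq:detailed-balance}, together with conservation of positivity and of mass for $(e^{t\L})$, makes $(e^{t\tilde{\L}})_{t\geq 0}$ a symmetric Markov semigroup on the probability space $(\R^d,\M\,\d v)$, with $e^{t\tilde{\L}}1=1$. A routine change of variables identifies the paper's Dirichlet form $\mathscr{E}$ (defined on $L^2(\M^{-1}\,\d v)$) with the natural Dirichlet form of this Markov semigroup on $L^2(\M\,\d v)$, so that \eqref{eq:log-sob-L} becomes the standard LSI with constant $\lambda_0$. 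Gross's theorem then implies hypercontractivity: for every $1<p<q<\infty$ there exists $T=T(p,q,\lambda_0)>0$ such that
\[
  \|e^{T\tilde{\L}}\phi\|_{L^q(\M\,\d v)} \leq \|\phi\|_{L^p(\M\,\d v)} \qquad \text{for all } \phi\geq 0.
\]

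The next step is to exploit the cut-off assumption \eqref{eq:cutoff} to produce a pointwise lower bound on the semigroup that rules out such $L^p\to L^q$ smoothing. Using \eqref{eq:linear-Boltzmann-kernel-form} I would write $\tilde{\L}\phi = \tilde{K}\phi - \sigma_B(v)\phi$, where $\tilde{K}$ is a positivity-preserving operator and $\sigma_B\in L^\infty_{\mathrm{loc}}(\R^d)$ by \eqref{eq:cutoff} (indeed, $\sigma_B$ is even continuous for the kernels considered here). The Duhamel formula with integrating factor $e^{\sigma_B(v)t}$, combined with the nonnegativity of $\tilde{K}e^{s\tilde{\L}}\phi$ for $\phi\geq 0$, then yields
\[
  e^{t\tilde{\L}}\phi(v) \geq e^{-\sigma_B(v)t}\,\phi(v), \qquad v\in\R^d,\ t\geq 0,\ \phi\geq 0.
\]

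To derive the contradiction, I pick any $v_0\in\R^d$ (where $\M(v_0)>0$ and $\sigma_B$ is locally bounded), fix $1<p<q<\infty$ with associated $T$ from Step~1, and test hypercontractivity on the concentrated family $\phi_\epsilon := \epsilon^{-d/p}\1_{B(v_0,\epsilon)}$. A direct computation shows that $\|\phi_\epsilon\|_{L^p(\M\,\d v)}^p$ remains bounded as $\epsilon\to 0$, whereas the pointwise lower bound above gives
\[
  \|e^{T\tilde{\L}}\phi_\epsilon\|_{L^q(\M\,\d v)}^q \geq \epsilon^{-qd/p}\int_{B(v_0,\epsilon)} e^{-q\sigma_B(v)T}\,\M(v)\,\d v \geq c\,\epsilon^{d(1-q/p)}\,e^{-q\sigma_B(v_0)T}\,\M(v_0)
\]
for $\epsilon$ small enough. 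Since $q>p$, the exponent $d(1-q/p)$ is negative and the right-hand side diverges as $\epsilon\to 0$, contradicting the hypercontractivity bound.

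The main obstacle is the bookkeeping of the first step: one must verify carefully that $\mathscr{E}$ in \eqref{eq:log-sob-L} really coincides, under the conjugation $\phi = f/\M$, with the Dirichlet form of $(e^{t\tilde{\L}})$ on $L^2(\M\,\d v)$, so that Gross's theorem can be invoked verbatim. Once this is done, the mechanism of the contradiction is conceptually transparent: under cut-off the dynamics retains a multiplicative decay factor $e^{-\sigma_B(v)t}$ that already forbids any instantaneous $L^p\to L^q$ gain, while LSI would demand exactly such a gain.
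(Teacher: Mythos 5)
Your proposal is correct and follows essentially the same route as the paper's Appendix~\ref{sec:no-log-sob}: conjugate to the Markov semigroup on $L^2(\M\,\d v)$, invoke Gross's theorem to turn the assumed LSI into hypercontractivity, and use the cut-off splitting $\L = K - \sigma_B$ to get the pointwise lower bound $e^{t\L}f \geq e^{-\sigma_B(v)t}f$, which forbids any $L^p\to L^q$ gain. The only (inessential) difference is the final witness: the paper picks an initial datum in $L^2(\d\mu)\setminus L^p(\d\mu)$ and shows it remains outside $L^p(\d\mu)$, whereas you concentrate a normalized bump at a point and show the $L^q$ norm blows up.
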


Hence, the linear Boltzmann operator is an interesting case in which
the entropy dissipation (or modified log-Sobolev) inequality
\eqref{entropydiss} holds, but the log-Sobolev inequality
\eqref{eq:log-sob-L} does not!

The links between our modified log-Sobolev inequality
\eqref{entropydiss} and true log-Sobolev inequalities turn out to be
tighter than expected. Recall that the well-known Gaussian log-Sobolev
inequality (also known as Stam-Gross inequality) asserts that
\begin{equation}\label{eq:Gaussian-log-Sob}
I(f|\M) \geq \frac{2}{\theta}\, H(f|M)
\end{equation}
for any $f \in L^1(\R^d)$ with unit mass. Here $I(f|\M)$ is the
relative Fisher information
$$I(f|\M)=\ird  f(v) \left|\nabla \log\left(\frac{f(v)}{\M(v)}\right)\right|^2 \d v.$$
The above functional inequality is, as well-known, the entropy-entropy
dissipation estimate for the Fokker-Planck equation
\begin{equation}\label{eq:FP}\partial_t \varrho(t,v)=\nabla \cdot \left(\nabla \varrho(t,v) - \frac{\nabla \M(v)}{\M(v)} \varrho(t,v)\right)\end{equation}
since the time derivative of $H(\varrho|\M)$ along solutions to
\eqref{eq:FP} exactly yields
$$\ddt H(\varrho(t)|\M)=-I(\varrho(t)|\M) \qquad \forall t \geq 0.$$
In Proposition \ref{prp:D-comparison} below we are able to show that
inequality \eqref{entropydiss} also holds for the following family of
collision kernels (depending on $\epsilon \in (0,1]$):
\begin{equation}
\label{grazing}
  B_\epsilon(|q|,\xi) = |q| b_\epsilon(\xi),
  \qquad
  b_\epsilon(\xi) = \xi \1_{[0,\epsilon]}(\xi),
\end{equation}
where $\1_{[0,\epsilon]}$ denotes the characteristic of the interval
$[0,\epsilon]$. As $\epsilon \to 0$, a suitable scaling of equation
\eqref{eq:linear-Boltzmann} with this collision kernel approaches a
Fokker-Planck equation (with a diffusion matrix different from the
identity; see \citet{LoTo}.) The dependence of $\lambda$ on $\epsilon$
actually enables us to recover in the limit $\epsilon \to 0$ a version
of \eqref{eq:Gaussian-log-Sob} for that diffusion matrix (notice that
\eqref{eq:Gaussian-log-Sob} corresponds to the case of an identity
diffusion matrix); details of this are given in Section
\ref{sec:diffusive}. This procedure can be understood as a microscopic
validation of well-known logarithmic Sobolev inequalities.

%Inequality \eqref{eq:log-sob-L} is known to be equivalent to $L^q-L^p$
%regularization properties of $\L$ (we refer again to
%\citet{Gross1993Logarithmic}). However, if one is interested in the
%speed of relaxation to equilibrium, our results strongly suggest that
%the entropy dissipation inequality \eqref{entropydiss}, and not the
%logarithmic Sobolev inequality \eqref{eq:log-sob-L}, is the right
%analog of \eqref{eq:Gaussian-log-Sob}.

There are interesting similarities between this result and one derived
in \citet{BT06}: it is shown there that one may obtain
\eqref{eq:Gaussian-log-Sob} as the limit of certain discrete modified
log-Sobolev inequalities. We show a similar result here, but through a
completely different limiting process.

\subsection{Method of proof}

Our proof of \eqref{entropydiss} consists in first proving the result
for the dissipation $D_{\mathrm{max}}$ of the linear Boltzmann
operator $\L_\mathrm{max}$ associated with a Maxwellian collision
kernel and then deducing the result for other collision kernels by a
comparison argument. Namely, one of the main steps in our proof is the
following comparison result whose proof closely follows the lines of a
similar result proved for the study of the spectral gap of
$\L_{\mathrm{hs}}$ \cite[Proposition 3.3]{LoMo}:
\begin{propo}
  \label{prp:D-comparison-intro}
  Take $\gamma \geq 0$ and let $\D_{\gamma}$ denote the entropy
  dissipation functional of the linear Boltzmann operator associated
  to the collision potential \eqref{eq:hard-potential-kernel} (so that
  $\gamma=0$ corresponds to Maxwellian molecules interactions). There
  is some positive explicit constant $C>0$, depending only on
  $\gamma$, such that
  $$\D_{\gamma}(f) \geq C \theta^{\gamma/2} \D_{0}(f)$$
  for any probability distribution $f$.
\end{propo}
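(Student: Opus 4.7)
The plan is to reduce Proposition~\ref{prp:D-comparison-intro} to the pointwise comparison
\begin{equation*}
  k_{B_\gamma}(v,v') \;\geq\; C(\gamma,d)\,\theta^{\gamma/2}\,k_{B_0}(v,v'),\qquad v,v' \in \R^d,
\end{equation*}
between the two integral kernels appearing in the representation \eqref{eq:linear-Boltzmann-kernel-form}. Once this pointwise bound is established, it suffices to multiply by the non-negative weight $\tfrac{1}{2}\M(v)\,\Psi(f(v)/\M(v),f(v')/\M(v'))$ and integrate over $(v,v')\in\R^d\times\R^d$, using the form \eqref{eq:D-linear-Boltz-2} of the entropy dissipation, to obtain $\D_\gamma(f)\geq C\,\theta^{\gamma/2}\D_0(f)$ for every probability density $f$.

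The first step is to derive a Carleman-type representation of $k_{B_\gamma}$. Starting from the weak form \eqref{weak} and performing, for fixed $v$, the change of variables $(v_*,n)\mapsto(v',w)$ with $n=(v-v')/|v-v'|$, $v_*=v'-w$, $w\in(v-v')^\perp$ (the associated Jacobian being $|v-v'|^{1-d}$), a direct computation gives
\begin{equation*}
  k_{B_\gamma}(v,v') \;=\; \frac{c_d}{|v-v'|}\int_{(v-v')^\perp} \bigl(|v-v'|^2+|w|^2\bigr)^{(\gamma+2-d)/2}\,\M(v-w)\,dw,
\end{equation*}
together with the analogous expression for $\gamma=0$. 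Consequently, the ratio $k_{B_\gamma}(v,v')/k_{B_0}(v,v')$ equals exactly the expectation of $(|v-v'|^2+|w|^2)^{\gamma/2}$ under the probability measure on $(v-v')^\perp$ whose density is proportional to $(|v-v'|^2+|w|^2)^{(2-d)/2}\M(v-w)$. Denoting by $v^\perp$ the orthogonal projection of $v$ onto $(v-v')^\perp$, the Maxwellian factor reduces up to normalisation to $\exp(-|v^\perp-w|^2/(2\theta))$, so this reference measure is the polynomial reweighting of a Gaussian of scale $\sqrt{\theta}$ centred at $v^\perp$.

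The remaining task is to bound this expectation below by $C\theta^{\gamma/2}$ uniformly in $(v,v')$. A natural dichotomy appears: when $|v-v'|\geq\sqrt{\theta}$ one uses the pointwise estimate $(|v-v'|^2+|w|^2)^{\gamma/2}\geq|v-v'|^\gamma\geq\theta^{\gamma/2}$; when $|v-v'|<\sqrt{\theta}$ one uses instead $(|v-v'|^2+|w|^2)^{\gamma/2}\geq|w|^\gamma$ and rescales $w=\sqrt{\theta}\,s$, so that the ratio becomes $\theta^{\gamma/2}$ times a purely dimensionless integral in the parameters $u=|v-v'|/\sqrt{\theta}\in[0,1]$ and $\tau=|v^\perp|/\sqrt{\theta}\geq 0$. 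I expect the main obstacle to be showing that this dimensionless integral is uniformly bounded below by a positive constant. The hard corner is $u,\tau\ll 1$, since in dimension $d\geq 3$ the polynomial weight $(u^2+s^2)^{(2-d)/2}$ concentrates the rescaled measure near $s=0$, exactly where $|w|^\gamma$ vanishes, and one must verify that the Gaussian spread still yields a strictly positive expectation. This I plan to handle by an explicit spherical integration in $s$, reducing matters to a one-dimensional integral whose limit as $u\to 0$ is an explicit strictly positive constant, combined with a continuity/compactness argument in $(u,\tau)$ on bounded sets; the complementary regime $\tau\gtrsim 1$ is trivial, since the reference measure then concentrates at distance $\gtrsim\sqrt{\theta}$ from the origin, where $|w|^\gamma\gtrsim\theta^{\gamma/2}$. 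The overall structure of the estimate mirrors the spectral-gap comparison of \cite[Proposition~3.3]{LoMo}.
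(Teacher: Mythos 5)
Your proposal follows essentially the same route as the paper: the paper likewise reduces the dissipation comparison to a pointwise bound $k_{B_\gamma}\geq C\theta^{\gamma/2}k_{B_0}$ via Carleman's representation (Propositions \ref{prop:D-comparison-general} and \ref{lem:kernel-comparison}), which becomes exactly your lower bound on the ratio of hyperplane convolution integrals, verified for hard potentials in Proposition \ref{prp:D-comparison-hardpotentials}. The only cosmetic differences are that the paper extracts the factor $\theta^{\gamma/2}$ by a global rescaling of the dissipation functionals rather than your dichotomy on $|v-v'|$ versus $\sqrt{\theta}$, and it obtains the uniform lower bound by an explicit splitting of the integration domain at $|\bar v-\bar v_*|=\delta$ with Gaussian tail estimates instead of your continuity--compactness argument, which yields a fully explicit constant.
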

For the proof of this (in a more general statement that allows for
comparing dissipations of other Lyapunov functionals) see Proposition
\ref{prp:D-comparison-hardpotentials}.

\medskip Then one sees that in order to prove Theorem \ref{main} it is
enough to prove it for a normalized Maxwellian collision kernel. This
lends itself to significant simplification since, as is well-known,
Maxwellian collision kernels generally allow for explicit
computations. Here is the heart of the argument, which we give for
simplicity in the Maxwellian molecules case (i.e., for $B$ given by
\eqref{eq:Maxwell-kernel}). In this case, the linear Boltzmann
operator $\L_{\mathrm{max}}$ can be written as
$$\L_{\mathrm{max}}(f)=\L^+_{\mathrm{max}}(f) -f.$$
Now, the operator $\L^+_{\mathrm{max}}(f) = \Q^+_{\mathrm{max}}(f,M)$
satisfies the following analog of the Shannon-Stam inequality
\citep[Corollary 4.3]{fisher}: for any probability densities $f$ and
$g$ it holds that
\begin{equation}
  \label{eq:villani}
  H(\Q^+_{\mathrm{max}}(f,g)) \leq \frac{1}{2} H(f)+ \frac{1}{2} H(g),
\end{equation}
where $H$ is the \emph{Shannon-Boltzmann entropy}
\begin{equation}
  \label{eq:H}
  H(f) := \ird f(v) \log f(v) \d v,
\end{equation}
defined for any nonnegative $f \in L^1(\R^d)$ with finite energy. We
show in Lemma \ref{lem:max} that this translates to a contraction
property of $\L^+_{\mathrm{max}}$, measured in entropy:
\begin{equation*}
  \H(\L^+_{\mathrm{max}} f |\M)
  \leq
  \frac 1 2 \H(f|\M).
\end{equation*}
This allows us to write
\begin{multline*}
  \D_\mathrm{max}(f)=\IR f
  \log\left(\dfrac{f}{\M}\right)\d\v
  -\IR \L^+_{\mathrm{max}}(f)\log\left(\dfrac{f}{\M}\right)\d\v
  \\
  =
  \H(f|\M)
  -\H(\L^+_{\mathrm{max}} f  | \M)
  +\H(\L^+_{\mathrm{max}} f  | f)
  \geq
  \frac 1 2 \H(f|\M).
\end{multline*}
Notice that we estimated $\H(\L^+_{\mathrm{max}} f | f) \geq 0$
since $\L^+_{\mathrm{max}}(f)$ and $f$ have the same mass. This shows
the inequality.

This provides an interesting link between the entropy dissipation
inequality \eqref{entropydiss} and the convexity property
\eqref{eq:villani} of the gain part $\Q^+_{\mathrm{max}}$ of the
bilinear Boltzmann operator. The proof of \eqref{eq:villani} was based
on a similar contraction property of $\Q^+_{\mathrm{max}}(f,g)$ with
respect to the Fisher information, along with a representation of the
Fisher information as the time-derivative of the entropy along the
adjoint Ornstein-Uhlenbeck semigroup (see equation \eqref{eq:3} in
Section \ref{sec:maxwell}); we refer to \cite{fisher} for a detailed
proof. Estimates for other Maxwellian kernels (i.e., depending only on
the $\xi$ variable) may be obtained by using extensions of
\eqref{eq:villani} which were essentially proved in \citet*{MT12}. We
refer to Section \ref{sec:maxwell} for details on this.

\subsection{Structure of the paper}

The plan of the paper is as follows. In Section \ref{sec:maxwell} we
prove our results for Maxwellian kernels (including the proof of
Theorem \ref{main} for Maxwellian kernels.) In Section
\ref{sec:non-maxwell} we prove a more general version of the
comparison result in Proposition \ref{prp:D-comparison-intro} in order
to deduce Theorem \ref{main} for hard potential interactions, thus
completing the proof of Theorem \ref{main}.  In Section
\ref{sec:speed1} we show how the entropy dissipation inequality may be
used to give an exponential rate of convergence to equilibrium for
eq.~\eqref{eq:linear-Boltzmann} (which is straightforward) and for a
nonlinear Boltzmann equation with particles bath. Finally, we describe
in Section \ref{sec:diffusive} the link between our inequality
\eqref{entropydiss} and logarithmic Sobolev inequalities. In
particular, we recall the Fokker-Planck limit of grazing collisions
and some well-known features of log-Sobolev inequalities, and then
show how some of them can be recovered from \eqref{entropydiss}.

\section{Inequalities for Maxwellian collision kernels}
\label{sec:maxwell}

We begin in this section with a proof of the following entropy
dissipation inequality for the linear Boltzmann operator with a
Maxwellian collision kernel:
\begin{theo}
  \label{theo:max}
  Let $B(|q|,\xi)=b(\xi)$ be a normalized Maxwellian
  collision kernel. Let $\D_\mathrm{max}$ denote the associated entropy
  dissipation functional. For any probability distribution $f=f(v)$
  one has
  \begin{equation}
  \label{teoMax}
    \D_\mathrm{max}(f) \geq \gamma_b \, \H(f|\M)
  \end{equation}
  with $\gamma_b$ defined in (\ref{eq:gammab}).
\end{theo}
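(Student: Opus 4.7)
The plan is to implement the strategy sketched in the introduction, adapting it from the Maxwell molecules case to an arbitrary normalized Maxwellian kernel $b$. Because $b$ is normalized and $\M$ has unit mass, the loss term of $\L$ collapses to multiplication by $1$, so one has $\L f = \L^+ f - f$ with $\L^+ f = \Q^+(f,\M)$. Starting from the identity $\D_\mathrm{max}(f) = -\ird \L f \, \log(f/\M)\,\d v$ given by \eqref{eq:dtH-linear-Boltz}, substituting this decomposition, splitting $\log(f/\M) = \log(f/\L^+ f) + \log(\L^+ f/\M)$, and using mass conservation $\ird \L^+ f\,\d v = \ird f\,\d v = 1$, I would obtain the identity
\[
  \D_\mathrm{max}(f) \;=\; \H(f|\M) \;-\; \H(\L^+ f|\M) \;+\; \H(\L^+ f|f).
\]
Since $\L^+ f$ and $f$ are both probability densities, the last term is nonnegative and may be dropped, yielding
\[
  \D_\mathrm{max}(f) \;\geq\; \H(f|\M) - \H(\L^+ f|\M).
\]

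The crux of the proof is then the entropy-contraction estimate
\[
  \H(\L^+ f|\M) \;\leq\; (1-\gamma_b)\,\H(f|\M);
\]
inserting this into the previous display immediately gives $\D_\mathrm{max}(f) \geq \gamma_b\,\H(f|\M)$, which is exactly \eqref{teoMax}. For Maxwell molecules this contraction is essentially a consequence of the Shannon-Stam type inequality \eqref{eq:villani} applied with $g=\M$, together with a careful translation between the Boltzmann entropy $H$ and the relative entropy $\H(\cdot|\M)$. For a general normalized Maxwellian kernel $b(\xi)$ I would rely on the extension of \eqref{eq:villani} due to \cite{MT12}, itself proved by establishing a Fisher-information contraction for the gain operator $\Q^+_b(\cdot,\M)$ and integrating it along the adjoint Ornstein-Uhlenbeck semigroup interpolating between $f$ and $\M$ (a de Bruijn-type identity). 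The coefficient $1-\gamma_b$ arises from the second angular moment $\gamma_b = \int_{\S}(\tilde q\cdot n)^2 b(\tilde q\cdot n)\,\d n$ of $b$, which quantifies how one collision with a Gaussian thermal bath contracts peculiar velocities.

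The main obstacle will be the derivation of this contraction with the sharp constant $1-\gamma_b$ for a general angular weight $b$, rather than just the rotationally uniform case. Concretely, one has to check that the weight attached to $g=\M$ in the extended Shannon-Stam type inequality for $\Q^+_b(f,g)$ equals precisely $\gamma_b$, and that no further moment assumptions on $b$ intervene when converting the Fisher-information estimate into a relative-entropy estimate along the Ornstein-Uhlenbeck flow. Once this Lemma \ref{lem:max}-style contraction is available, the theorem follows immediately from the three-line manipulation above.
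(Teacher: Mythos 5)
Your proposal is correct and follows essentially the same route as the paper: the identity $\D_\mathrm{max}(f)=\H(f|\M)-\H(\L^+f|\M)+\H(\L^+f|f)$, dropping the nonnegative last term, and the contraction $\H(\L^+f|\M)\leq(1-\gamma_b)\H(f|\M)$ obtained from the Matthes--Toscani Fisher-information inequality integrated along the adjoint Ornstein--Uhlenbeck semigroup. The ``careful translation'' you flag is handled in the paper by an explicit energy identity, $\ird \L f\,|v-u_0|^2\d v=-\gamma_b\ird(f-\M)|v-u_0|^2\d v$, which disposes of the $\log\M$ terms with exactly the constant $\gamma_b$.
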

\medskip

In order to prove this we need several previous results; the
proof of Theorem \ref{theo:max} is given at the end of this
section. We first prove the following contraction property of the entropy which
is essentially contained in \citet{MT12}:
\begin{lemme}
  \label{lem:MT}
  Let $B(|q|,\xi)=b(\xi)$ be a normalized Maxwellian
  collision kernel. For any probability distributions $f, g$ one has
  \begin{equation}
    \label{eq:MT}
    H(\Q_+(f,g))
    \leq
    (1-\gamma_b) H(f) + \gamma_b H(g)
  \end{equation}
where we recall that $H(\cdot)$ denotes the Shannon-Boltzmann entropy defined in \eqref{eq:H}.
\end{lemme}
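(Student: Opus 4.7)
My plan is to follow the strategy of \citet{fisher} (for Maxwell molecules) and \citet{MT12} (for general Maxwellian kernels), which links an entropy subadditivity of this form to a Blachman--Stam-type inequality for the Fisher information through the adjoint Ornstein--Uhlenbeck semigroup.

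\textbf{Step 1 (probabilistic representation).} Since $\int_{\S} b(\tilde q \cdot n)\,dn = 1$, the density $\Q_+(f,g)$ coincides with the law of
$$V' \;=\; V - ((V-W)\cdot N)\,N \;=\; (I - N\otimes N)\,V + (N\otimes N)\,W,$$
where $V \sim f$ and $W \sim g$ are independent, and $N \in \S$ is sampled, conditionally on $(V,W)$, with density $n \mapsto b\bigl((V-W)/|V-W|\cdot n\bigr)$. For fixed $N$, $V'$ is a linear combination of $V$ and $W$ through the two orthogonal projectors onto $N^\perp$ and $\mathrm{span}(N)$.

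\textbf{Step 2 (Fisher information bound).} I would first establish the Fisher information counterpart
$$I(\Q_+(f,g)) \;\leq\; (1-\gamma_b)\,I(f) + \gamma_b\,I(g).$$
Applying the matrix form of the Blachman--Stam inequality fiberwise (conditionally on $N$) to the orthogonal splitting in Step~1 yields
$$I(V'\,|\,N) \;\leq\; \mathrm{tr}\bigl((I - N\otimes N)\,I_V\bigr) + \mathrm{tr}\bigl((N\otimes N)\,I_W\bigr),$$
where $I_V, I_W$ denote the Fisher information matrices of $f$ and $g$. The constants $\gamma_b$ and $1-\gamma_b$ arise naturally after integrating in $N$ from the elementary identity
$$\int_{\S} b(\tilde q\cdot n)\,n\otimes n\,dn \;=\; \gamma_b\,\tilde q\otimes\tilde q + \tfrac{1-\gamma_b}{d-1}\bigl(I - \tilde q\otimes\tilde q\bigr), \qquad \tilde q \in \S,$$
which follows from rotational invariance around $\tilde q$ combined with the definition \eqref{eq:gammab} of $\gamma_b$.

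\textbf{Step 3 (de Bruijn integration).} I would then deduce \eqref{eq:MT} from the Fisher information inequality using the adjoint Ornstein--Uhlenbeck semigroup $(P_t^\ast)_{t\geq 0}$ whose invariant measure is a standard Gaussian $\mathcal{G}$ on $\R^d$, together with de Bruijn's identity
$$H(h\,|\,\mathcal{G}) \;=\; \int_0^{+\infty} I(P_t^\ast h\,|\,\mathcal{G})\,dt.$$
The key algebraic ingredient is the compatibility of $\Q_+$ with Gaussian convolution, which allows the Fisher information inequality to be applied to $(P_t^\ast f, P_t^\ast g)$ and integrated in $t$. The contributions coming from the Gaussian reference cancel thanks to the mass conservation of $\Q_+$ and $\gamma_b + (1-\gamma_b) = 1$, thereby converting the relative-entropy statement into the Shannon--Boltzmann entropy inequality \eqref{eq:MT}.

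\textbf{Main obstacle.} The most delicate step is the Fisher information inequality of Step~2: the anisotropic weight $b(\xi)$ on $\S$ must be carefully averaged to extract precisely the constants $\gamma_b$ and $1-\gamma_b$, and one must control the fact that the collision direction $N$ depends on $V - W$. A secondary difficulty is the interaction of $\Q_+$ with Ornstein--Uhlenbeck diffusion in Step~3: because of the same dependence of $N$ on $V - W$, a direct commutation $P_t^\ast \Q_+(f,g) = \Q_+(P_t^\ast f, P_t^\ast g)$ cannot be expected in general, and one has to use instead an appropriate monotonicity argument along the semigroup in the spirit of \citet{MT12}.
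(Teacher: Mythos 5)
Your overall strategy --- deduce the entropy inequality from the corresponding Fisher information inequality by integrating along the adjoint Ornstein--Uhlenbeck flow via de Bruijn's identity --- is exactly the one the paper uses. However, both of your load-bearing steps have genuine problems. In Step 2, the fiberwise Blachman--Stam argument does not go through as stated: conditionally on $N=n$, the pair $(V,W)$ is no longer distributed according to $f\otimes g$, since its density is proportional to $f(v)g(w)\,b\bigl(\tfrac{v-w}{|v-w|}\cdot n\bigr)$, which is not a product measure unless $b$ is constant. Blachman--Stam requires independence, so it cannot be applied to the conditional law, and the subsequent averaging over $N$ (whose own law depends on $V-W$) does not extract the constants $\gamma_b$ and $1-\gamma_b$ in the clean way you indicate. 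You flag this dependence as ``the most delicate step,'' but it is not a technicality to be controlled: it is precisely the obstruction that forces Matthes and Toscani to prove the Fisher inequality \eqref{eq:2} by an entirely different route, namely Bobylev's Fourier representation of $\Q_+$ for Maxwellian kernels (see the remark following Lemma \ref{lem:MT}). The paper does not reprove this inequality at all; it simply quotes \citet[eq.~(3)]{MT12}.

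In Step 3 your worry points in the wrong direction: for \emph{Maxwellian} collision kernels the commutation $\mathcal{S}_t\Q_+(f,g)=\Q_+(\mathcal{S}_tf,\mathcal{S}_tg)$ with the adjoint Ornstein--Uhlenbeck semigroup \emph{does} hold exactly --- this is Bobylev's lemma \citep{bob}, visible on the Fourier side from the identity $|\kappa^+|^2+|\kappa^-|^2=|\kappa|^2$ --- and it is exactly what the paper uses (equation \eqref{eq:QSt}) to write $H(\Q_+(f,g))-H(\M)=\int_0^\infty\bigl(I(\Q_+(\mathcal{S}_tf,\mathcal{S}_tg))-I(\M)\bigr)\d t$ and then conclude by applying \eqref{eq:2} under the integral and using \eqref{eq:4}. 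By discarding the commutation and replacing it with an unspecified ``monotonicity argument along the semigroup,'' you leave the passage from the Fisher inequality to the entropy inequality incomplete. So the skeleton is right, but as written neither of the two essential ingredients is actually established: the first needs the Fourier-based result of \citet{MT12} rather than a direct Blachman--Stam argument, and the second needs Bobylev's commutation rather than a workaround for its supposed failure.
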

\begin{proof}
  In \citet[eq.~(3)]{MT12} it is proved that
  for the Fisher information
  \begin{equation}
  \label{Fisher}
  I(f) = \ird \frac{|\nabla f(v)|^2}{f(v)}\, \d v
  \end{equation}
  an analogous inequality holds
  \begin{equation}
    \label{eq:2}
    I(\Q_+(f,g))
    \leq
    (1-\gamma_b) I(f) + \gamma_b I(g).
  \end{equation}
  (Notice that the estimates in \citet*{MT12} are written in terms of
  the $\sigma$-representation for Boltzmann's operator; here we have
  written the corresponding expression in the $n$-representation by a
  change of variables.) To deduce \eqref{eq:MT} from \eqref{eq:2}, we
  use a well-known strategy already used in \citet{fisher}, based on
  the nice property that the Boltzmann operator commutes with the
  adjoint Ornstein-Uhlenbeck semigroup. Namely, given a probability
  measure $f$, let $\mathcal{S}_t f(v)=\varrho(t,v)$ denote the unique solution
  (at time $t \geq 0$) to the Fokker-Planck equation \eqref{eq:FP}
  with initial datum $\varrho(0)=f$ (i.e. $(\mathcal{S}_t)_{t \geq 0}$ is the
  adjoint Ornstein-Uhlenbeck semigroup).  Whenever $B(|q|,\xi)=b(\xi)$
  is a normalized Maxwellian collision kernel we have \citep{bob}
  \begin{equation}
    \label{eq:QSt}
    \Q_+(\mathcal{S}_t f, \mathcal{S}_tf) = \mathcal{S}_t\Q_+(f, f ) \qquad \forall t \geq 0,
  \end{equation}
  for any $f \in L^1(\R^d)$ with finite energy. Moreover, a well-known
  property of Fisher information is that
  \begin{equation}    \label{eq:4}
    H(f) - H(\M)
    = \int_0^\infty (I(\mathcal{S}_t f) - I(\M)) \d t,
  \end{equation}
  for any $f \in L^1(\R^d)$ with unit mass and finite
  energy. Combining this with the above commutation property
  \eqref{eq:QSt}, one gets the following representation formula:
  \begin{equation}
    \label{eq:3}
    H(\Q_+(f,g)) - H(\M)
    = \int_0^\infty (I(\Q_+(\mathcal{S}_t f, \mathcal{S}_t g) - I(\M)) \d t.
  \end{equation}
  Applying \eqref{eq:2} in
  \eqref{eq:3} and then \eqref{eq:4} gives
  \begin{equation*}\begin{split}
    H(\Q_+(f,g)) - &H(\M)
    \\
    &\leq
    (1-\gamma_b) \int_0^\infty (I(\mathcal{S}_t f) - I(\M)) \d t
    + \gamma_b \int_0^\infty (I(\mathcal{S}_t g) - I(\M)) \d t
    \\
    &=
    (1-\gamma_b) H(f) + \gamma_b H(g)-H(\M)
  \end{split}\end{equation*}
  which completes the proof.
\end{proof}

\begin{nb}
  The proof of \eqref{Fisher} as derived in \cite{MT12} is based on an
  explicit representation of $\Q$ in Fourier variables. It is for this
  reason that it is crucial for their techniques to deal with
  Maxwellian collision kernels.
\end{nb}
We define the gain part of the linear operator $\L$ by $\L_+(f) =
\Q_+(f,\M)$. Next we show that $\L_+$ takes a function closer to the
equilibrium in the relative entropy sense.
\begin{lemme}
  \label{lem:max}
  Let $B(|q|,\xi)$ be a normalized Maxwellian collision kernel and let
  $\L$ be associated linear Boltzmann operator. Then,
  \begin{equation}
    \label{eq:L-contractive-in-entropy}
    \H(\L_+ f |\M)
    \leq
    (1-\gamma_b) \H(f|\M),
  \end{equation}
  where $\gamma_b$ is defined by \eqref{eq:gammab}.
\end{lemme}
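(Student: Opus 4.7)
The plan is to reduce Lemma~\ref{lem:max} to the entropy bound of Lemma~\ref{lem:MT} by carefully tracking the ``moment'' part of the relative entropy. Since $\log\M(v)=-\tfrac{d}{2}\log(2\pi\theta)-|v-u_0|^2/(2\theta)$, for any probability density $g$ one has
\begin{equation*}
\H(g|\M)=H(g)+\tfrac{d}{2}\log(2\pi\theta)+\tfrac{1}{2\theta}M_2(g),\qquad M_2(g):=\ird g(v)|v-u_0|^2\,\d v.
\end{equation*}
Because $\L_+f$, $f$ and $\M$ are all probability densities, and $\H(\M|\M)=0$, the constants cancel and the inequality \eqref{eq:L-contractive-in-entropy} is equivalent to
\begin{equation*}
H(\L_+f)+\tfrac{1}{2\theta}M_2(\L_+f)\le(1-\gamma_b)\bigl[H(f)+\tfrac{1}{2\theta}M_2(f)\bigr]+\gamma_b\bigl[H(\M)+\tfrac{1}{2\theta}M_2(\M)\bigr].
\end{equation*}

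The entropic half is handled by Lemma~\ref{lem:MT} applied with $g=\M$: since $\L_+f=\Q_+(f,\M)$,
\begin{equation*}
H(\L_+f)\le(1-\gamma_b)H(f)+\gamma_b H(\M).
\end{equation*}
It therefore suffices to establish the \emph{exact} moment identity
\begin{equation*}
M_2(\L_+f)=(1-\gamma_b)M_2(f)+\gamma_b M_2(\M).
\end{equation*}

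To prove this, I will use the weak formulation \eqref{weak}. Because the Maxwellian kernel is normalized, $\L f=\L_+f-f$, and hence testing $\L_+f$ against $\psi(v)=|v-u_0|^2$ amounts to computing the gain contribution
\begin{equation*}
\IRR\IS b(\xi)\,f(v)\M(v_*)\bigl(|v'-u_0|^2-|v-u_0|^2\bigr)\d n\,\d v_*\,\d v.
\end{equation*}
Using $v'-u_0=(v-u_0)-(q\cdot n)n$ with $q=v-v_*$, the inner angular integral reduces, by rotational symmetry, to linear combinations of $\int_{\S} b(\tilde q\cdot n)(\tilde q\cdot n)^2\d n=\gamma_b$; a short calculation gives
\begin{equation*}
\IS b(\xi)\bigl(|v'-u_0|^2-|v-u_0|^2\bigr)\d n=\gamma_b\bigl(|v_*-u_0|^2-|v-u_0|^2\bigr).
\end{equation*}
Integrating against $\M(v_*)$ yields $\gamma_b(M_2(\M)-|v-u_0|^2)$, and a further integration against $f(v)$ gives the desired identity. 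Combining the two halves then proves \eqref{eq:L-contractive-in-entropy}.

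The proof is essentially routine once Lemma~\ref{lem:MT} is in hand; the only mildly delicate step is the angular identity $\int_{\S} b(\xi)(q\cdot n)n\,\d n=\gamma_b q$ (and its square analogue), which one justifies by noting that the integrand, being rotationally symmetric around $\tilde q$, must be parallel to $\tilde q$ with the magnitude read off from the definition \eqref{eq:gammab} of $\gamma_b$.
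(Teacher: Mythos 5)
Your proof is correct and follows essentially the same route as the paper: apply Lemma \ref{lem:MT} with $g=\M$ to handle the Shannon entropy, and reduce the remaining $\log\M$ (i.e.\ second-moment) contribution to the exact identity $\ird \L f\,|v-u_0|^2\d v=-\gamma_b\ird(f-\M)|v-u_0|^2\d v$, which is precisely the paper's Lemma \ref{lem:energy-contraction-lemma}. The only difference is bookkeeping (you split $\H(\cdot|\M)=H(\cdot)+\tfrac d2\log(2\pi\theta)+\tfrac1{2\theta}M_2(\cdot)$ up front, while the paper manipulates $\ird\L_+f\log\M\,\d v$ directly), and your angular computation of the moment identity is a valid, slightly cleaner variant of the paper's.
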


\begin{proof}
  We have, using Lemma \ref{lem:MT} with $g=\M$
  \begin{multline*}
    \H(\L_+ f |\M)
    = H(\L_+ f )
    - \ird \L_+ f  \log \M \d v  \leq
    (1-\gamma_b) H(f) + \gamma_b H(\M)
    - \ird \L_+ f  \log \M \d v.\end{multline*}
  Since $B(q,\xi)$ is a \emph{normalized} Maxwellian collision kernel, we have that $\L f=\L_+(f)-f$ so that
  $$\H(\L_+ f |\M)  \leq
  (1-\gamma_b) \H(f|\M)
  -\gamma_b \ird (f-\M) \log\M \d v
  - \ird \L f  \log \M\,\d v.$$
  Thus, \eqref{eq:L-contractive-in-entropy} reduces to showing that
  \begin{equation*}
    - \gamma_b \ird (f-\M) \log \M \d v
    \leq  \ird \L f \log \M \d v,
  \end{equation*}
  or, in other words, that
  \begin{equation}
    \label{eq:energy-contraction}
    \ird \L f  |v-u_0|^2 \d v
    \leq - \gamma_b \ird (f-\M) |v-u_0|^2 \d v,
  \end{equation}
  where we have used that $\ds\ird \L f  \d v = \ird (f-\M) \d v = 0$. Actually,
  \eqref{eq:energy-contraction} holds with equality, which can be
  checked by an explicit calculation which we give in the following
  Lemma \ref{lem:energy-contraction-lemma} for the convenience of the
  reader.
\end{proof}

The estimate we need in \eqref{eq:energy-contraction} can be obtained
from the fact that the evolution of the temperature in equation
\eqref{eq:linear-Boltzmann} is explicit in the Maxwellian case (which
was already known; see for example \citet{ST04}). We give here a short
proof for completeness:
\begin{lemme}
  \label{lem:energy-contraction-lemma}
  Let $B(q,\xi)=b(\xi)$ be a normalized Maxwellian collision kernel and $f \in
  L^1(\R^d; (1+|v|^2) \d v)$. Then
  \begin{equation}
    \label{eq:energy-contraction-lemma}
    \ird \L f (v) |v-u_0|^2 \d v
    = - \gamma_b \ird (f(v)-\M(v)) |v-u_0|^2 \d v.
  \end{equation}
\end{lemme}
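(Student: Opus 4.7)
The natural approach is to use the weak form \eqref{weak} of $\L$ with test function $\psi(\v)=|\v-u_0|^2$ and reduce the statement to two explicit spherical integrals against $b(\xi)$. The plan would be to first compute $\psi(\v')-\psi(\v)$ pointwise using the scattering rule \eqref{vprime}, then integrate over $\n\in\S$ to obtain an expression depending only on $(\v,\vb)$, and finally integrate against $f(\v)\M(\vb)$ using that $f$ and $\M$ are probability densities.

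For the first step, since $\v'=\v-(\q\cdot\n)\n$, one finds immediately
$$\psi(\v')-\psi(\v)=|\v-u_0-(\q\cdot\n)\n|^{2}-|\v-u_0|^{2}=-2(\q\cdot\n)\,(\v-u_0)\cdot\n+(\q\cdot\n)^{2}.$$
The second step hinges on two spherical identities at fixed $\q\in\R^d\setminus\{0\}$. Writing $\tilde\q=\q/|\q|$ so that $\xi=|\tilde\q\cdot\n|$, the quadratic term is handled directly by the definition \eqref{eq:gammab}, which gives
$$\IS b(\xi)(\q\cdot\n)^{2}\d\n=|\q|^{2}\gamma_{b}.$$
For the linear term, the vector $\IS b(\xi)(\q\cdot\n)\,\n\d\n$ is invariant under rotations fixing the $\tilde\q$-axis and must therefore be parallel to $\q$; taking its scalar product with $\tilde\q$ to extract the coefficient, and noting $(\q\cdot\n)(\tilde\q\cdot\n)=|\q|(\tilde\q\cdot\n)^{2}$, I get
$$\IS b(\xi)(\q\cdot\n)\,\n\d\n=\gamma_{b}\,\q.$$

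Combining these, and substituting $\q=\v-\vb$, the $\n$-integral simplifies to
$$\IS b(\xi)\bigl[\psi(\v')-\psi(\v)\bigr]\d\n=-2\gamma_{b}(\v-u_0)\cdot(\v-\vb)+\gamma_{b}|\v-\vb|^{2}=-\gamma_{b}\bigl(|\v-u_0|^{2}-|\vb-u_0|^{2}\bigr),$$
the last equality coming from expanding $\q=(\v-u_0)-(\vb-u_0)$ and collecting terms. Plugging this into \eqref{weak} and using $\IR f\d\v=\IR\M\d\vb=1$ produces the claimed identity. The only slightly non-routine point is the symmetry argument for the vector-valued integral; everything else is algebra. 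There is no real obstacle in this lemma, since all integrals are Gaussian or purely geometric — it is essentially a clean payoff of the normalization \eqref{eq:normalized} and of the definition \eqref{eq:gammab} of $\gamma_b$.
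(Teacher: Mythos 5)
Your proof is correct and follows essentially the same route as the paper's: both apply the weak form \eqref{weak} with $\psi(v)=|v-u_0|^2$ and reduce the claim to the angular integrals that define $\gamma_b$. The only (cosmetic) difference is that you carry out the $\S$-integration exactly via the vector identity $\IS b(\xi)(q\cdot n)\,n\,\d n=\gamma_b\,q$, whereas the paper first replaces $f$ by $h=f-\M$ and lets the cross terms cancel by symmetry of $\M$ and $\ird h\,\d v=0$; note that, exactly as in the paper, your argument uses $\ird f\,\d v=1$, which is implicit in the statement of the lemma.
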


\begin{proof}
  By using the weak form \eqref{weak} of $\L$ and the fact that $\L(\M)
  = 0$, and writing $h := f - \M$ and $\xi := (q \cdot n) / |q|$,
  \begin{equation}
    \label{eq:1}
    \ird \L h (v) |v-u_0|^2 \d v
    =
    \ird\int_{\S}\ird
    h(v)\M(v_*) b(\xi) (|v'-u_0|^2 - |v-u_0|^2 ) \d v_* \d n \d v.
  \end{equation}
  Notice that
  \begin{multline*}
    |v'-u_0|^2 - |v-u_0|^2
    =
    -\xi^2 |v-u_0|^2 - |v_*-u_0|^2 \xi^2 + 2 (v-u_0) \cdot(v_*-u_0) \xi^2
    \\
    - 2 ((v_*-u_0)\cdot n) ((v-u_0) \cdot n)
    + 2 ((v_*-u_0)\cdot n)^2.
  \end{multline*}
  When substituted inside \eqref{eq:1}, several of these terms vanish
  after integration due to either $\ds	\int h \d v = 0$ or the symmetry of $\M$
  about $u_0$. Hence we obtain, using also the normalization of $\M$, that
  \begin{equation*}
    \ird \L h(v) |v-u_0|^2 \d v
    =
    -\ird
    |v-u_0|^2 h(v) \int_{\S} b(\xi) \xi^2 \d n \d v =
    -\gamma_b \ird |v-u_0|^2 h(v) \d v
  \end{equation*}
which is the desired result.
\end{proof}

\begin{nb}
  Notice that \eqref{eq:energy-contraction} can be rewritten as
  \begin{equation*}
    \ird \Q_+(f,\M) |v-u_0|^2 \d v
    \leq (1- \gamma_b) \ird f |v-u_0|^2 \d v
    + \gamma_b \ird \M |v-u_0|^2 \d v\,,
  \end{equation*}
  which strongly resembles \eqref{eq:MT} for the temperature
  functional instead of the relative entropy. Equation
  \eqref{eq:energy-contraction-lemma}, written as
  \begin{equation*}
    \ddt \ird |v-u_0|^2 (f-\M) \d v
    =
    -\gamma_b \ird |v-u_0|^2 (f-\M) \d v
  \end{equation*}
  is also analogous to
  \eqref{entropydiss}, which can be written as
  \begin{equation*}
    \ddt \H(f|\M) \leq -\gamma_b \H(f|\M)
  \end{equation*}
  for a Maxwellian kernel.
\end{nb}

\medskip
We are finally able to complete the proof of Theorem \ref{theo:max}:
\begin{proof}[Proof of Theorem \ref{theo:max}]
  Since in the Maxwellian case we have $\L f =\L_+ f -f$, using the
  expression of $D_\mathrm{max}$ in \eqref{eq:dtH-linear-Boltz} one gets that
  \begin{multline*}
    \D_\mathrm{max}(f)=\IR f
    \log\left(\dfrac{f}{\M}\right)\d\v
    -\IR \L_+(f)\log\left(\dfrac{f}{\M}\right)\d\v
    % \\
    % =
    % H(f|\M)-\IR \L_+(f)\log\left(\frac{f(v)}{\M(v)}\right)\d\v.
    \\
    =
    \H(f|\M)
    -\H(\L_+ f  | \M)
    +\H(\L_+ f  | f)
    \geq
    \H(f|\M)
    -\H(\L_+ f  | \M),
  \end{multline*}
  since $\ds\ird \L_+ f  \d v= 1 = \ds\ird f \d v$ and
  $$\H(g|f)=\ird g \log \frac{f}{g} \d v \geq
  0$$ whenever $f$ and $g$ share the same mass. Finally, using Lemma
  \ref{lem:max} to estimate $\H(\L_+ f  | \M)$ gives
  \begin{equation*}
    \D(f)
    \geq
    \gamma_b \H(f|\M)
  \end{equation*}
  which is the desired result.
\end{proof}

\section{Inequalities for non-Maxwellian collision kernels}
\label{sec:non-maxwell}

\subsection{Comparison of dissipations for general kernels}
\label{sec:comparison}

As explained in the Introduction, the rest of entropy dissipation
inequalities which we derive are based on Theorem \ref{theo:max},
valid for Maxwellian collision kernels. We then obtain similar
inequalities by comparing the dissipation for a given kernel $B$ with
a Maxwellian dissipation. This strategy was already used in
\citet*{LoMo} in order to estimate the spectral gap for the operator
$\L$ and comes from \cite{MoBa} where it was used to estimate the
spectral gap of the linearized operator $\Q(f,\M) + \Q(\M,f)$. For the
linear Boltzmann operator $\L$, we give here an improved version which
enables us, for example, to estimate the entropy dissipation
functional for the physical case of hard-spheres interactions (we will
also use this comparison principle for grazing collisions kernels in
Section \ref{sec:diffusive}).

Since the linear equation \eqref{eq:linear-Boltzmann} is, as remarked
before, the Kolmogorov forward equation of a Markov process with
equilibrium $\M$, it is well known (see for example \citet{Chafai04})
that all functionals of the form
\begin{equation}
  \label{eq:H-Phi}
  \H_\Phi(f|\M) = \ird M(v) \Phi\left( \frac {f(v)} {\M(v)} \right) \d v,
\end{equation}
for $\Phi:[0,\infty) \to [0, \infty)$ convex, are decreasing along
solutions to \eqref{eq:linear-Boltzmann}. In fact, using the detailed
balance property \eqref{eq:detailed-balance} one sees formally that
\begin{equation*}
  \ddt \H_\Phi(f(t)|\M)
  = \ird \L f(t,v) \, \Phi'\left( \frac {f(t,v)} {\M(v)} \right) \d v
  = -\D_\Phi(f(t)),
\end{equation*}
for any solution $f(t,v)$ to \eqref{eq:linear-Boltzmann}, where
\begin{gather}\label{eq:D_Phi}
  \D_\Phi(f) :=
  \frac{1}{2} \ird \ird \int_{\S}
  B(|q|, \xi)\, \M(v)\M(v_*)
  \Psi\left(
    \frac{f(v)}{\M(v)}, \frac{f(v')}{\M(v')}
    \right)
  \d n \d v \d v_*
  \\
  \Psi(x,y) := (x-y) (\Phi'(x) - \Phi'(y)) \geq 0,
  \qquad x,y \in [0,+\infty).
\end{gather}
Alternatively, we can write
\begin{equation}
  \label{eq:D_Phi-kernel-form}
  \D_\Phi(f) := \frac{1}{2} \ird \ird
  \M(v) k_B(v,v')
  \Psi\left( \frac{f(v)}{\M(v)}, \frac{f(v')}{\M(v')} \right)
  \d v \d v',
\end{equation}
with $k_B$ the kernel of the linear operator $\L$ (see
\eqref{eq:linear-Boltzmann-kernel-form}). (Note, however, that
$\H_\Phi(f|\M)$ is decreasing along solutions also for equations
without detailed balance, though the expression of the dissipation
is different in that case.)

We call $\H_\Phi(f|\M)$ the \emph{relative $\Phi$-entropy} of $f$ with
respect to $\M$. Particular examples of it are given by
$\Phi(x) = x \log x - x + 1$, which gives the usual relative entropy
\eqref{eq:relative-entropy} when $f$ has the same mass as $\M$; and
$\Phi(x) = (x-1)^2$, which gives the distance of $f$ to the
equilibrium $\M$ in the $L^2$ norm with weight $\M^{-1}$.  Since $\L$
depends on the collision kernel $B$, it will be sometimes convenient to
rather write $\D_{\Phi}^B(f)$ to emphasize the collision kernel
$B$. Since our arguments apply to general relative $\Phi$-entropies
with no modification, we state our results for them as well.

\begin{propo}[Comparison of dissipations]
\label{prop:D-comparison-general}
Let $B$, $\tilde{B}$ be two collision kernels defined by
  \begin{equation}
    \label{eq:B-to-compare}
    B(|q|,\xi) = \beta(|q|) b(\xi),
    \qquad
    \tilde{B}(|q|,\xi) = b(\xi)
  \end{equation}
  where $\beta:[0,\infty) \to [0,\infty)$ is a nondecreasing mapping
  and $b(\cdot)$ satisfies the normalization condition
  \eqref{eq:normalized}. Call $\M_0$ the normalized Maxwellian with
  mean velocity $0$ and temperature $\theta > 0$ (that is, $\M_0(v) =
  \M(v+u_0)$).  Assume that there exists $\varrho_0 >0$ such that
  \begin{equation}
    \label{convol}
    \tilde{C}_\theta := \underset{s \in [0,\varrho_0]}{\inf_{\bar{v} \in \R^{d-1}}}
    \frac{\displaystyle \int_{\R^{d-1}}
      \beta \Big(\big(|\bar{v}-\bar{v}_*|^2 + s^2\big)^{1/2}\Big)
      b\left(  \frac{s}{\left(|\bar{v}-\bar{v}_*|^2 + s^2  \right)^{1/2}}
      \right)
      \, \M_0 (\bar{v}_*) \d \bar{v}_*}
    {\displaystyle \int_{\R^{d-1}}
      b\left(  \frac{s} {\left(|\bar{v}-\bar{v}_*|^2 + s^2\right)^{1/2}}
      \right)
      \, \M_0 (\bar{v}_*) \d \bar{v}_*} > 0.
  \end{equation}
  (Where, for $w \in \R^{d-1}$, $\M_0(w)$ is understood as
  $\M_0(w,0)$.) Then
  \begin{equation}
    \label{eq:diss-comparison}
    \D_\Phi^B(f) \geq C_\theta \D_\Phi^{\tilde{B}}(f)
  \end{equation}
  for any probability distribution $f \in L^1(\R^d)$, with $C_\theta
  := \min\{\beta(\varrho_0), \tilde{C}_\theta\}$.
\end{propo}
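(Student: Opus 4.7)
The plan is to establish the comparison by proving a pointwise lower bound on the kernel appearing in \eqref{eq:D_Phi-kernel-form}: if one can show that $k_B(v,v') \geq C_\theta\, k_{\tilde{B}}(v,v')$ for a.e.\ $(v,v') \in \R^d \times \R^d$, then, since $\Psi \geq 0$ and the Maxwellian weight $\M(v)$ is common to both dissipations, the inequality \eqref{eq:diss-comparison} follows at once from \eqref{eq:D_Phi-kernel-form}. Everything reduces to an explicit manipulation of $k_B$, which is most naturally obtained from a Carleman-type parametrization of the collision data.

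\textbf{Carleman-type representation of $k_B$.} Starting from the gain operator
\begin{equation*}
  \L_+ f(v) \;=\; \int_{\R^d}\int_{\S} B(|q|,\xi)\, f(v')\,\M(v_*')\,dn\,dv_*,
\end{equation*}
I would perform the classical change of variables $(v_*,n) \mapsto (v',\bar{v}_*)$ at fixed $v$, parametrizing collisions by the outgoing velocity $v'$ and a vector $\bar{v}_*$ in the hyperplane orthogonal to $v-v'$ through $v'$, so that $n = (v-v')/|v-v'|$ and $v_* = v' + \bar{v}_*$ with $\bar{v}_* \perp (v-v')$. Setting $s := |v-v'|$, we have $|q|^2 = s^2 + |\bar{v}_*|^2$ and $\xi = s/|q|$, and up to a Jacobian factor depending only on $s$ one obtains
\begin{equation}
  \label{eq:kB-plan}
  k_B(v,v') \;=\; \frac{c}{s^{d-1}}\int_{\R^{d-1}} \beta\!\bigl(\sqrt{s^2+|\bar{v}_*|^2}\bigr)\, b\!\Bigl(\tfrac{s}{\sqrt{s^2+|\bar{v}_*|^2}}\Bigr)\,\M(v' + \bar{v}_*)\,d\bar{v}_*,
\end{equation}
with the analogous formula for $k_{\tilde{B}}$ obtained by removing the factor $\beta$. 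Note that the prefactor $c\, s^{-(d-1)}$ is identical for $B$ and $\tilde{B}$, so it drops out of the ratio $k_B/k_{\tilde{B}}$.

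\textbf{Reduction to hypothesis \eqref{convol}.} Decomposing $v' - u_0 = a\,n + \bar{v}$ into components parallel and perpendicular to $n$, one obtains $|v' + \bar{v}_* - u_0|^2 = a^2 + |\bar{v}+\bar{v}_*|^2$, so that $\M(v'+\bar{v}_*)$ factors as $e^{-a^2/(2\theta)}$ times a quantity proportional to $\M_0(\bar{v}+\bar{v}_*)$. The factor $e^{-a^2/(2\theta)}$ is the same in numerator and denominator. After the translation $\bar{v}_* \mapsto \bar{v}_* - \bar{v}$ in $\R^{d-1}$ (which turns $|\bar{v}_*|$ into $|\bar{v}_*-\bar{v}|$ inside $\beta$ and $b$ while restoring $\M_0(\bar{v}_*)$), the ratio $k_B(v,v')/k_{\tilde{B}}(v,v')$ coincides exactly with the quotient of integrals defining $\tilde{C}_\theta$ in \eqref{convol}, evaluated at the particular $s = |v-v'|$ and $\bar{v}$ arising from the pair $(v,v')$.

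\textbf{Splitting and conclusion.} It remains to bound this ratio below by $C_\theta$ uniformly. On $\{(v,v'):\,s\geq\varrho_0\}$, the monotonicity of $\beta$ and $\sqrt{s^2+|\bar{v}_*|^2}\geq s\geq\varrho_0$ give $\beta(\sqrt{s^2+|\bar{v}_*|^2})\geq\beta(\varrho_0)$ pointwise in \eqref{eq:kB-plan}, so $k_B(v,v')\geq\beta(\varrho_0)\,k_{\tilde{B}}(v,v')$ there. On $\{s\leq\varrho_0\}$, the hypothesis \eqref{convol} gives directly $k_B(v,v')\geq\tilde{C}_\theta\,k_{\tilde{B}}(v,v')$. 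Taking the smaller constant $C_\theta=\min\{\beta(\varrho_0),\tilde{C}_\theta\}$ and inserting into \eqref{eq:D_Phi-kernel-form} yields \eqref{eq:diss-comparison}. The main technical difficulty is the rigorous execution of the Carleman change of variables leading to \eqref{eq:kB-plan} --- tracking the Jacobian $s^{-(d-1)}$ and verifying that the angular integration over $\S$ merges correctly with the integration of $v_*$ over the orthogonal hyperplane --- and, relatedly, accounting for the fact that $\M$ is centered at $u_0$ rather than at $v'$, which is precisely what forces the infimum over $\bar{v}\in\R^{d-1}$ in \eqref{convol}.
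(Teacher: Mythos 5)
Your argument is correct and is essentially the paper's own proof: both reduce \eqref{eq:diss-comparison} to a pointwise comparison of the Carleman kernels $k_B$ and $k_{\tilde{B}}$ appearing in \eqref{eq:D_Phi-kernel-form}, factor out the longitudinal Gaussian so that the ratio of kernels becomes exactly the quotient of transverse convolution integrals in \eqref{convol}, and then split according to whether $|v-v'|$ exceeds $\varrho_0$, using the monotonicity of $\beta$ in the first case and hypothesis \eqref{convol} in the second, with $C_\theta=\min\{\beta(\varrho_0),\tilde{C}_\theta\}$. The only superficial difference is that you attach the transverse hyperplane to $v'$ rather than to $v$ (so your representation computes $k_B(v,v')$ rather than the paper's $k_B(v',v)$ in \eqref{eq:kB}), but by detailed balance these differ by the factor $\M(v')/\M(v)$, which is the same for $B$ and $\tilde{B}$ and hence irrelevant to the comparison.
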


\begin{nb}
  At first sight, the comparison of convolution integrals
  \eqref{convol} may seem difficult to check. However, we shall see
  further on that it holds true for hard potential interactions (see
  Prop. \ref{prp:D-comparison-hardpotentials}) and for the kernels
  used in the grazing collision limit (see Proposition
  \ref{prp:D-comparison}).
\end{nb}

In order to give the proof of Proposition
\ref{prop:D-comparison-general} we follow the ideas in
\cite[Proposition 3.3]{LoMo}, but we rephrase the argument in a
simplified way. Particularly, we show that Proposition
\ref{prop:D-comparison-general} can actually be deduced from a
comparison of the kernels $k_B$, $k_{\tilde{B}}$ of $\L$ corresponding
to $B$ and $\tilde{B}$.

Notice that the kernel $k_B$ of $\L$ (see expression
\eqref{eq:linear-Boltzmann-kernel-form}) can be calculated by the use
of Carleman's representation (originally described by
\citet{Carleman1957}; see also \citet[section 1.4.6]{Villani02}):
\begin{multline*}
\L_+ f (v)=  \Q_+(f,\M)(v)
  =
  \int_{\R^d} \int_{\S} B(|q|, \xi)
  f(v')\M(v'_*) \d v_* \d n
  \\
  =
  2 \int_{\R^d}  \frac{f(v')}{|v-v'|^{d-1}}
  \int_{E_{v,v'}} B(|q|,\xi) \M(v'_*) \d v'_* \d v',
\end{multline*}
where $E_{v,v'}$ is the hyperplane $\{v'_* \in \R^d \mid (v'_* - v)
\cdot (v'-v)=0 \}$, and it is understood that the $\d v'_*$ integral
above is with respect to the $(d-1)$-dimensional Lebesgue measure on this
hyperplane. Since $|q|$ and $\xi$ must now be written in terms of
$v$, $v'_*$ and $v'$, note that
\begin{equation}
  \label{eq:Carleman-change-q-xi}
  |q| = |2v - v' - v'_*|,
  \qquad
  |q \cdot n| = |v-v'|,
  \qquad
  \xi = \frac{|v-v'|}{|2v - v' - v'_*|}.
\end{equation}
Hence we have, for any $B=B(|q|,\xi)$:
\begin{equation}
  \label{eq:kB}
  k_B(v',v) = \frac{1}{|v-v'|^{d-1}} \int_{E_{v,v'}} B(|q|,\xi) \M(v'_*) \d v'_*,
  \qquad v',v \in \R^d.
\end{equation}
We now prove the following which clearly implies Proposition
\ref{prop:D-comparison-general} by virtue of
\eqref{eq:D_Phi-kernel-form}:
\begin{propo}[Comparison of kernels]
  \label{lem:kernel-comparison}
  Assume that the collision kernels $B$ and $\tilde{B}$ satisfy
  \eqref{convol}. Then, for the same constant $C_\theta$ as in
  Proposition \ref{prop:D-comparison-general},
  \begin{equation}\label{eq:kBkmax}
    k_B(v',v) \geq C_\theta\, k_{\tilde{B}} (v',v)
    \quad \text{ for all } v,v' \in \R^d.
  \end{equation}
\end{propo}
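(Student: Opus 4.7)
The plan is to reduce the comparison of $k_B$ and $k_{\tilde{B}}$ directly to hypothesis \eqref{convol} using the Carleman parametrization \eqref{eq:kB}. For fixed $v,v' \in \R^d$, I would parametrize the hyperplane $E_{v,v'}$ by $v'_* = v + \bar{v}_*$ with $\bar{v}_* \perp (v'-v)$. The identities \eqref{eq:Carleman-change-q-xi} then give
\[
|q|^2 = |v-v'|^2 + |\bar{v}_*|^2, \qquad \xi = \frac{|v-v'|}{\sqrt{|v-v'|^2 + |\bar{v}_*|^2}},
\]
so that, writing $s := |v-v'|$, both $k_B(v',v)$ and $k_{\tilde{B}}(v',v)$ become $s^{-(d-1)}$ times integrals over the $(d-1)$-dimensional subspace $(v'-v)^\perp$ whose integrands share the common factor $b(\xi)\,\M(v+\bar{v}_*)$.

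The next step is to put the Maxwellian into the form appearing in \eqref{convol}. Decompose $u_0 - v = \bar{u} + w$ with $\bar{u} \in (v'-v)^\perp$ and $w \in \mathrm{span}(v'-v)$. Since $\bar{v}_* \perp w$, one obtains $|v + \bar{v}_* - u_0|^2 = |\bar{v}_* - \bar{u}|^2 + |w|^2$, hence
\[
\M(v+\bar{v}_*) = \Bigl( e^{-|w|^2/(2\theta)} (2\pi\theta)^{-1/2} \Bigr)\cdot (2\pi\theta)^{-(d-1)/2} e^{-|\bar{v}_* - \bar{u}|^2/(2\theta)}.
\]
The prefactor in large parentheses is independent of $\bar{v}_*$ and of the collision kernel, so it cancels in the ratio $k_B(v',v)/k_{\tilde{B}}(v',v)$. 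After a rotation identifying $(v'-v)^\perp$ with $\R^{d-1}$ and the substitution $\bar{v}_* \mapsto \bar{v}_* + \bar{u}$, and setting $\bar{v} := -\bar{u}$, this ratio becomes exactly the quotient in \eqref{convol} evaluated at parameters $(\bar{v}, s)$.

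It remains to split into two cases. When $s \in [0, \varrho_0]$, hypothesis \eqref{convol} yields directly $k_B(v',v) \geq \tilde{C}_\theta \, k_{\tilde{B}}(v',v)$. When $s > \varrho_0$, one has the pointwise bound $|q| = \sqrt{s^2 + |\bar{v}_*|^2} \geq s > \varrho_0$ throughout $E_{v,v'}$, and monotonicity of $\beta$ gives $\beta(|q|) \geq \beta(\varrho_0)$; inserting this into the integral for $k_B$ and comparing with $k_{\tilde{B}}$ gives $k_B(v',v) \geq \beta(\varrho_0)\, k_{\tilde{B}}(v',v)$. Combining the two cases produces \eqref{eq:kBkmax} with the constant $C_\theta = \min\{\tilde{C}_\theta, \beta(\varrho_0)\}$.

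The main obstacle is not analytical but a careful piece of geometric bookkeeping in the second step: one must verify that, as $(v,v')$ varies with $|v-v'|$ in $[0,\varrho_0]$, the resulting $\bar{v}$ genuinely ranges over all of $\R^{d-1}$ (so that taking the infimum $\tilde{C}_\theta$ is legitimate), and that the $(d-1)$-dimensional Gaussian obtained by restricting $\M$ to the shifted hyperplane agrees, after rotation, with the factor $\M_0(\bar{v}_*)$ appearing in \eqref{convol}, up to the common constant that has already been cancelled.
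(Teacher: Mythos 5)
Your proposal is correct and follows essentially the same route as the paper: Carleman's representation \eqref{eq:kB}, reduction to a $(d-1)$-dimensional integral inequality over the hyperplane with $s=|v-v'|$, and the case split $s\le\varrho_0$ (hypothesis \eqref{convol}) versus $s>\varrho_0$ (monotonicity of $\beta$), yielding $C_\theta=\min\{\beta(\varrho_0),\tilde C_\theta\}$; the only cosmetic difference is that the paper first reduces to $u_0=0$ by translation invariance, whereas you carry $u_0$ along via the orthogonal decomposition $u_0-v=\bar u+w$. The ``obstacle'' you flag at the end is actually a non-issue: since the infimum in \eqref{convol} is taken over \emph{all} $\bar v\in\R^{d-1}$ and $s\in[0,\varrho_0]$, you only need each pair $(v,v')$ to map \emph{into} that parameter set (which is immediate), not that $\bar v$ ranges over all of $\R^{d-1}$.
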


\begin{proof}
  By translation invariance of \eqref{eq:linear-Boltzmann} (i.e.,
  $\Q(f,\M)(v+u) = \Q(f(\cdot+u), \M(\cdot+u))(v)$) one sees that it
  is enough to show the result when the mean velocity $\M$, namely
  $u_0$, is equal to $0$ (in fact, the kernels $k_B$ corresponding to
  different mean velocities are just translations of one another).
  Hence we assume $u_0 = 0$ throughout the proof, so $\M = \M_0$ (this
  will make calculations easier).

  Using \eqref{eq:Carleman-change-q-xi} and \eqref{eq:kB},
  \eqref{eq:kBkmax} is equivalent to
  \begin{multline}
    \label{eq:5}
    \int_{E_{v,v'}} \beta( |2v - v' - v'_*|)
    \,b\left(\frac{|v-v'|}{|2v - v' - v'_*|}\right)M_0(v'_*) \d v'_*
    \geq
    C_\theta \int_{E_{v,v'}} \,b\left(\frac{|v-v'|}{|2v - v' -
        v'_*|}\right)M_0(v'_*) \d v'_*
    \\ \text{ for any } v',v \in \R^d.
  \end{multline}
  Take $n$ to be the unit vector along the direction of
  $v-v'$.  We
  now write $$v'_* = r n + \bar{v}'_*$$ for (uniquely determined)
  $r \in \R$ and $\bar{v}'_*$ orthogonal to $n$. Write also $$v =
  r n+ \bar{v}$$ for some $v$ orthogonal to $n$ (note that
  $r$ must have the same value as before, since $v'_*-v$ is orthogonal
  to $n$ in $E_{v,v'}$) and $$v' = (r+s)n + \bar{v}$$ for
  some $s \in \R$ (and the same $\bar{v}$ as before, since $v-v'$ is
  parallel to $n$.)  With this and the expressions in
  \eqref{eq:Carleman-change-q-xi} we have
  \begin{gather}
    \label{eq:q-change}
    |2v-v'-v'_*|^2
    = |v - v'|^2 + |v - v'_*|^2
    = |\bar{v} - \bar{v}'_*|^2 + s^2, \qquad
    \frac{|v-v'|}{|2v-v'-v'_*|}= \frac{s}{\sqrt{ |\bar{v} - \bar{v}'_*|^2 + s^2 }}.
  \end{gather}
  Changing variables to $\bar{v}'_*$, we obtain that \eqref{eq:5} reads
  \begin{equation*}
    \int_{n^\perp}  \beta \left( \sqrt{|\bar{v} - \bar{v}'_*|^2 + s^2} \right)
    \,b\left(\frac{s}{\sqrt{ |\bar{v} - \bar{v}'_*|^2 + s^2 }}\right)
    M_0(\bar{v}'_*) \d \bar{v}'_*
    \geq
    C_\theta \int_{n^\perp}
    b\left(\frac{s}{\sqrt{ |\bar{v} - \bar{v}'_*|^2 + s^2 }}\right)
    M_0(\bar{v}'_*) \d \bar{v}'_*.
  \end{equation*}
Notice that we have used here that $\M(v'_*) =
(2\pi\theta)^{-d/2}\M(\bar{v}'_*) \M(r n)$, with $\M(r n)$
independent of the integration variable and hence cancelling from both
sides of the inequality.

  By rotational symmetry we may also take $v-v'$ parallel to
  $(0,\cdots,0,1) \in \R^{d}$, so that $n^\perp = \R^{d-1}$,
  identified as the set of points in $\R^d$ with zero last coordinate
  (so the variables with a bar just represent the first $d-1$
  coordinates of the variables without a bar). Then, \eqref{eq:5} is equivalent to
  \begin{multline}
    \label{eq:7}
    \int_{\R^{d-1}}  \beta\left( \sqrt{|\bar{v} - \bar{v}'_*|^2 +
        s^2}\right)
    \,b\left(\frac{s}{\sqrt{ |\bar{v} - \bar{v}'_*|^2 + s^2 }}\right)
    \M_0(\bar{v}'_*) \d \bar{v}'_*
    \geq
    C_\theta \int_{\R^{d-1}} b
    \left(
      \frac{s}{\sqrt{ |\bar{v} - \bar{v}'_*|^2 + s^2 }}
    \right)
    \M_0(\bar{v}'_*) \d \bar{v}'_*
    \\
    \text{ for any $\bar{v} \in \R^{d-1}$ and $s \geq 0$.}
  \end{multline}
  Now, given $\varrho_0 > 0$, since $\beta(\cdot)$ is nondecreasing,
  it is clear that, for any $\bar{v} \in \R^{d-1}$ and any $s \geq
  \varrho_0$ it holds
  \begin{multline*}
   \int_{\R^{d-1}} \beta\left( \sqrt{|\bar{v} - \bar{v}'_*|^2 + s^2}\right)\,b\left(\frac{s}{\sqrt{ |\bar{v} - \bar{v}'_*|^2 + s^2 }}\right) \M_0(\bar{v}'_*) \d \bar{v}'_*\\
    \geq
    \beta(\varrho_0) \int_{\R^{d-1}} b\left(\frac{s}{\sqrt{ |\bar{v} - \bar{v}'_*|^2 + s^2 }}\right) \M_0(\bar{v}'_*) \d \bar{v}'_*.
  \end{multline*}
  It is then enough to show that \eqref{eq:7} holds for some constant
  $\tilde{C}_\theta$, uniformly for $\bar{v} \in \R^{d-1}$ and $s \in
  [0,\varrho_0)$. This is exactly assumption \eqref{convol}. This
  achieves the proof and, in particular, shows that \eqref{eq:kBkmax}
  holds with $C_\theta = \min(\beta(\varrho_0), \tilde{C}_\theta)$.
\end{proof}

By using Theorem \ref{theo:max}, Proposition
\ref{prop:D-comparison-general} directly implies the following
inequality for non-Maxwellian collision kernels:
\begin{theo}
  \label{thm:main-general}
  Assume that the collision kernel $B$ is given by
  \eqref{eq:B-to-compare} where $b(\xi)$ is a normalized Maxwellian
  collision kernel and $\beta(\cdot)$ satisfies \eqref{convol}. Then
  for all nonnegative probability distributions $f$ we have
  \begin{equation}
    \label{eq:EPI-non-Maxwell}
    \D_B(f) \geq C_\theta \gamma_b \, \H(f|\M),
  \end{equation}
  where $C_\theta > 0$ is the constant in Proposition
  \ref{prop:D-comparison-general} and $\gamma_b$ was defined in
  \eqref{eq:gammab}.
\end{theo}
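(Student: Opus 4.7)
The plan is to derive Theorem \ref{thm:main-general} as a direct corollary of the two main ingredients developed earlier in the section, namely the Maxwellian entropy dissipation estimate (Theorem \ref{theo:max}) and the kernel comparison result (Proposition \ref{prop:D-comparison-general}). In essence, the comparison principle allows us to replace the non-Maxwellian dissipation $\D_B$ by a Maxwellian one at the cost of a multiplicative constant, and the Maxwellian estimate then closes the inequality against the relative entropy.

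More concretely, I would first apply Proposition \ref{prop:D-comparison-general} with the convex function $\Phi(x) = x \log x - x + 1$, which is admissible since it is convex on $[0,\infty)$ with $\Phi(1) = 0$ and produces the Shannon--Boltzmann relative $\Phi$-entropy coinciding with $\H(f|\M)$ whenever $f$ has the same mass as $\M$ (here mass one). The associated $\Psi(x,y) = (x-y)(\log x - \log y)$ is precisely the one appearing in \eqref{eq:D-linear-Boltz}, so $\D_\Phi^B = \D_B$ and $\D_\Phi^{\tilde{B}} = \D_{\tilde{B}}$ where $\tilde{B}(|q|,\xi) = b(\xi)$. The hypothesis \eqref{convol} on $\beta$ and $b$ is exactly what is required to invoke Proposition \ref{prop:D-comparison-general}, yielding
\begin{equation*}
\D_B(f) \geq C_\theta \, \D_{\tilde{B}}(f).
\end{equation*}

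Next, since $\tilde{B}(|q|,\xi) = b(\xi)$ is by assumption a normalized Maxwellian collision kernel, Theorem \ref{theo:max} applies and gives
\begin{equation*}
\D_{\tilde{B}}(f) \geq \gamma_b \, \H(f|\M)
\end{equation*}
for any probability distribution $f$. Chaining the two inequalities produces $\D_B(f) \geq C_\theta \gamma_b \, \H(f|\M)$, which is precisely \eqref{eq:EPI-non-Maxwell}.

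Since both building blocks are already established in the preceding sections, there is no substantive obstacle here: the proof amounts to verifying that the relative entropy fits into the $\Phi$-entropy framework and checking that the collision kernel $\tilde{B}$ used in the comparison is the one to which Theorem \ref{theo:max} applies. The only mild care is in noting that the constant $C_\theta$ produced by Proposition \ref{prop:D-comparison-general} is independent of $f$, so that the estimate is genuinely linear in $\H(f|\M)$ with the explicit constant $C_\theta \gamma_b$.
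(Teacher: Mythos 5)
Your proposal is correct and follows exactly the paper's argument: the paper states Theorem \ref{thm:main-general} as a direct consequence of chaining Proposition \ref{prop:D-comparison-general} (applied with $\Phi(x)=x\log x - x +1$) with Theorem \ref{theo:max}. Your additional verification that the relative entropy fits the $\Phi$-entropy framework is a sensible explicit check of what the paper leaves implicit.
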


Remark \ref{rem:non-cutoff} also applies here: the above result is
valid for any collision kernels which can be bounded below by a
collision kernel satisfying the hypotheses of the theorem, and in
particular it applies to non-cut-off hard collision kernels.

\subsection{Application to hard-potential interactions}

We show here how the above Proposition applies to the fundamental
model of hard-potential interactions (including the hard-spheres case)
for which
\begin{equation}
  \label{eq:Bhard}
  B(|q|,\xi)=c_d\,|q|^\gamma \,\xi^{d-2}
\end{equation}
where $c_d > 0$ is a normalization constant given by
$$c_d:=\left(|\S|\int_0^1
  \xi^{d-2}  \,\left(1-\xi^2\right)^{\frac{d-3}{2}}\d\xi\right)^{-1}.
  % =\dfrac{d-1}{|\S|}.
$$
Introducing then $\tilde{B}(|q|,\xi)=b(\xi)=c_d \xi^{d-2}$, we see
that $\tilde{B}$ is a normalized Maxwellian collision kernel.
As a consequence of Proposition \ref{prop:D-comparison-general} we
obtain the following, which completes the proof of Theorem \ref{main}:

 \begin{propo}
  \label{prp:D-comparison-hardpotentials}
  Let $B$ be a hard-potential collision kernel of the form
  \eqref{eq:Bhard} with $\gamma \geq 0$, in dimension
  $d \geq 2$. There exists some explicit $C > 0$ such that
  \begin{equation}
    \label{eq:HS-comparison}
    \D_\Phi^{B}(f) \geq C \theta^{\gamma/2} \D_\Phi^{\tilde{B}}(f).
  \end{equation}
\end{propo}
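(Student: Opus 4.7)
The plan is to apply Proposition \ref{prop:D-comparison-general} with the factorization $\beta(|q|)=|q|^\gamma$ and $b(\xi)=c_d\,\xi^{d-2}$. Since $\gamma\geq 0$ the function $\beta$ is nondecreasing, and by the very definition of $c_d$ the map $b$ is a normalized Maxwellian kernel in the sense of \eqref{eq:normalized}, so the companion $\tilde{B}$ in \eqref{eq:B-to-compare} coincides with the Maxwellian kernel appearing in the statement. It therefore suffices to verify the convolution-ratio condition \eqref{convol} and to track how the resulting constant depends on the temperature $\theta$.

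Plugging these $\beta$ and $b$ into \eqref{convol} and cancelling the common factor $c_d s^{d-2}$ in numerator and denominator, the quantity to bound from below becomes
\begin{equation*}
F(\bar v,s)=\frac{\displaystyle\int_{\R^{d-1}}(|\bar v-\bar v_*|^2+s^2)^{(\gamma-d+2)/2}\,\M_0(\bar v_*,0)\,\d\bar v_*}{\displaystyle\int_{\R^{d-1}}(|\bar v-\bar v_*|^2+s^2)^{-(d-2)/2}\,\M_0(\bar v_*,0)\,\d\bar v_*}.
\end{equation*}
The isotropic rescaling $\bar v\mapsto\sqrt{\theta}\,\bar v$, $s\mapsto\sqrt{\theta}\,s$, $\bar v_*\mapsto\sqrt{\theta}\,\bar v_*$ factorizes $F$ as $\theta^{\gamma/2}$ times the same expression evaluated at $\theta=1$. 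I would choose $\varrho_0=\sqrt{\theta}$, so that $\beta(\varrho_0)=\theta^{\gamma/2}$ and the infimum over $s\in[0,\varrho_0]$ in \eqref{convol} becomes, after rescaling, an infimum over $s\in[0,1]$ for the unit-temperature problem. It then remains to show that the $\theta=1$ version of $F$ is bounded below by a positive constant $\tilde C_1$, uniformly for $\bar v\in\R^{d-1}$ and $s\in[0,1]$.

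For each $(\bar v,s)$ the two integrals defining $F$ are finite and strictly positive: the only potential singularity, at $\bar v_*=\bar v$ when $s=0$, is of order $|\bar v-\bar v_*|^{\gamma-d+2}$ in the numerator and $|\bar v-\bar v_*|^{-(d-2)}$ in the denominator, and in $(d-1)$-dimensional polar coordinates both are integrable since the respective radial exponents are $\gamma\geq 0>-1$ and $0>-1$. A standard dominated-convergence argument then shows that $F$ is continuous and positive on $\R^{d-1}\times[0,1]$, hence bounded below by a positive constant on every compact subset. To handle $|\bar v|\to\infty$ I would split the $\bar v_*$ integrals over $\{|\bar v_*|\leq|\bar v|/2\}$, where $|\bar v-\bar v_*|\geq|\bar v|/2$ and where the Gaussian $\M_0$ carries essentially all its mass, and its complement, whose contribution is exponentially small by Gaussian decay. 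This yields a lower bound $F(\bar v,s)\geq c\,|\bar v|^\gamma$ for $|\bar v|$ large, uniformly in $s\in[0,1]$. Combining the compact and asymptotic regimes gives $\tilde C_1>0$; reinstating the temperature and taking $C_\theta=\min\{\beta(\varrho_0),\tilde C_\theta\}$ produces a constant of the form $C_\theta\geq c'\,\theta^{\gamma/2}$, which via Proposition \ref{prop:D-comparison-general} delivers \eqref{eq:HS-comparison}.

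The main obstacle is essentially book-keeping: one must verify that the lower bound on $F$ survives the simultaneous degeneration $s\downarrow 0$ with $\bar v$ bounded, so that the continuity on compact sets is genuinely uniform, and that the Gaussian tail estimate used at large $|\bar v|$ is uniform in $s\in[0,1]$. Once these two uniformities are in place, the three ingredients---scaling, continuity on compact subsets, and growth of $F$ at infinity---combine directly into the estimate \eqref{eq:HS-comparison}.
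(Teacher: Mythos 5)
Your proposal is correct and follows the same overall strategy as the paper: reduce the claim to checking condition \eqref{convol} via Proposition \ref{prop:D-comparison-general}, extract the $\theta^{\gamma/2}$ factor by an isotropic scaling, and then prove a uniform positive lower bound on the ratio of the two convolution integrals at unit temperature. The points where you diverge are minor but worth noting. First, you perform the scaling inside \eqref{convol} (rescaling $\bar v,\bar v_*,s$ by $\sqrt{\theta}$ and choosing $\varrho_0=\sqrt{\theta}$), whereas the paper rescales the dissipation functionals themselves via $M_\mu(v)=\mu^d M(\mu v)$, $f_\mu(v)=\mu^d f(\mu v)$; both are legitimate and give the same exponent. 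Second, for the uniform lower bound at $\theta=1$ you combine a continuity-plus-compactness argument on bounded $(\bar v,s)$ with an asymptotic estimate $F(\bar v,s)\gtrsim |\bar v|^{\gamma}$ at infinity, while the paper instead splits the \emph{denominator's} integration region at $|\bar v-\bar v_*|=\delta$: the far region is handled trivially by $\beta\geq\delta^\gamma$, and the near-singular piece is bounded above by a Gaussian in $|\bar v|$ and compared against an explicit polynomial lower bound for the numerator (see \eqref{eq:6}--\eqref{eq:8}). Your soft argument is correct once the uniformity as $s\downarrow 0$ is checked (a change of variables $\bar w=\bar v_*-\bar v$ gives a dominating function and hence joint continuity of both integrals, including at $s=0$), but it sacrifices the explicitness of the constant on the compact region, whereas the statement advertises, and the paper's splitting delivers, an \emph{explicit} $C>0$. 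If explicitness matters, you would need to replace the compactness step by quantitative bounds of the type the paper uses.
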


\begin{proof}
  The proof consists simply in checking that Assumption \eqref{convol}
  is met by the kernels $\beta(|q|)=|q|^{\gamma}$ and $b(\xi)=c_d
  \xi^{d-2}.$ Actually, the dependence on $\theta$ is easily obtained:
  call, for $\mu > 0$,
  \begin{equation*}
    M_\mu(v) := \mu^d M(\mu v),
    \quad
    f_\mu(v) := \mu^d f(\mu v).
  \end{equation*}
  Note that the temperature of $M_\mu$ is $\mu^{-2}$ times that of
  $M$. Then we have the scaling
  \begin{equation*}
    \D_{\Phi,M_\mu}^{B}(f_\mu) = \mu^{-\gamma} \D_{\Phi,M}^{B}(f),
    \quad
    \D_{\Phi,M_\mu}^{\tilde{B}}(f_\mu) = \D_{\Phi,M}^{B}(f),
  \end{equation*}
  where we have denoted the dependence on $M$ as an additional
  subscript. One sees then that it is enough to show
  \eqref{eq:HS-comparison} when the temperature $\theta$ of $M$ is
  equal to 1, so we assume this in the rest of the proof.

  To prove \eqref{convol} it suffices clearly to show that there
  exists $C >0$ such that
  \begin{equation}
    \label{eq:6}
    \int_{\R^{d-1}} \M_0(\bar{v}_*)
    (\sqrt{|\bar{v}-\bar{v}_*|^2+s^2})^{2-d+\gamma}
    \d \bar{v}_*
    \geq C\int_{\R^{d-1}} \M_0(\bar{v}_*)
    (\sqrt{|\bar{v}-\bar{v}_*|^2+s^2})^{2-d}
    \d \bar{v}_*
  \end{equation}
  for any $\bar{v} \in \R^{d-1}$ and any $0 < s \leq 1$. Choose
  $\delta > 0$. In the region where $|\bar{v}-\bar{v}_*| \geq \delta$
  we have $\sqrt{|\bar{v}-\bar{v}_*|^2+s^2} \geq \delta$ and
  hence
  \begin{equation*}
    \int_{|\bar{v}-\bar{v}_*| \geq \delta} \M_0(\bar{v}_*)
    (\sqrt{|\bar{v}-\bar{v}_*|^2+s^2})^{2-d+\gamma}
    \d \bar{v}_*
    \geq
    \delta^\gamma
    \int_{|\bar{v}-\bar{v}_*| \geq \delta} \M_0(\bar{v}_*)
    (\sqrt{|\bar{v}-\bar{v}_*|^2+s^2})^{2-d}
    \d \bar{v}_*.
  \end{equation*}
  So it is enough to show that
  \begin{equation}
    \label{eq:8}
    \int_{|\bar{v}-\bar{v}_*| < \delta} \M_0(\bar{v}_*)
    (\sqrt{|\bar{v}-\bar{v}_*|^2+s^2})^{2-d}
    \d \bar{v}_*
    \leq
    K
    \int_{\R^{d-1}} \M_0(\bar{v}_*)
    (\sqrt{|\bar{v}-\bar{v}_*|^2+s^2})^{2-d+\gamma}
    \d \bar{v}_*
  \end{equation}
  for some $K > 0$, all $\bar{v} \in \R^d$ and all $0 < s \leq
  1$, which would imply \eqref{eq:6} to hold  with $C = \min\{\delta^\gamma,
  1/K\}$. Let us bound the left-hand-side of \eqref{eq:8} first. On
  the integration region we have $|\bar{v}_*| \geq
  (|\bar{v}|-\delta)_+ =: \max\{|\bar{v}|-\delta, 0\}$. Hence
  \begin{equation*}
    \M_0(\bar{v}_*) \leq
    K_1 \exp(-|\bar{v}_*|^2/2)
    \leq K_1
    \exp\left( -\frac{(|\bar{v}|-\delta)_+^2}{2} \right)
  \end{equation*}
  for some $K_1 > 0$. Using this, the left hand side of \eqref{eq:8}
  is bounded above by
  \begin{equation}\begin{split}
    \label{eq:9}
    \int_{|\bar{v}-\bar{v}_*| < \delta} \M_0(\bar{v}_*)
    (\sqrt{|\bar{v}-\bar{v}_*|^2+s^2})^{2-d}
    \d \bar{v}_*
    &\leq
    \int_{|\bar{v}-\bar{v}_*| < \delta} \M_0(\bar{v}_*)
    |\bar{v}-\bar{v}_*|^{2-d}
    \d \bar{v}_*
    \\
    &\leq K_2 \exp \left( -\frac{(|\bar{v}|-\delta)_+^2}{2} \right),
  \end{split}\end{equation}
  for some $K_2 > 0$. On the other hand, using that for $|\bar{v}_*| <
  1$ we have
  \begin{equation*}
    \sqrt{|\bar{v}-\bar{v}_*|^2+1} \leq
    \sqrt{(|\bar{v}| + 1)^2+1}
    \leq
    K_4 (|\bar{v}| + 1)
  \end{equation*}
  for some $K_4 > 1$, we see that the right hand side of \eqref{eq:8}
  is bounded below by
  \begin{multline}
    \label{eq:10}
    \int_{\R^{d-1}} \M_0(\bar{v}_*)
    (\sqrt{|\bar{v}-\bar{v}_*|^2+s^2})^{2-d+\gamma}
    \d \bar{v}_*
    \geq
    \int_{|\bar{v}_*|<1} \M_0(\bar{v}_*)
    (\sqrt{|\bar{v}-\bar{v}_*|^2+1})^{2-d+\gamma}
    \d \bar{v}_*
    \\
    \geq
    K_5 \left(1+|\bar{v}|\right)^{2-d+\gamma}
  \end{multline}
  when $d \geq 2+\gamma$, or simply by
  \begin{equation}
    \label{eq:11}
    \int_{\R^{d-1}} \M_0(\bar{v}_*)
    (\sqrt{|\bar{v}-\bar{v}_*|^2+s^2})^{2-d+\gamma}
    \d \bar{v}_*
    \geq
    \int_{\R^{d-1}} \M_0(\bar{v}_*) |\bar{v}-\bar{v}_*|^{2-d+\gamma}
    \d \bar{v}_*
    \geq
    K_6,
  \end{equation}
  for some $K_6 > 0$, when $d < 2 + \gamma$. The bounds \eqref{eq:9},
  \eqref{eq:10} and \eqref{eq:11} clearly show \eqref{eq:8}, finishing
  the proof of the lemma.
\end{proof}

\begin{exa}[Hard-spheres case in dimension $3$]
  \label{exa:hard-spheres}
  Let us estimate the constant $C$ above in general dimension $d \geq
  2$ whenever $\gamma = d-2$, which happens to be slightly easier and
  covers in particular the physically relevant case of hard-spheres in
  dimension $d=3$ for which $\gamma=1$. Let us then assume that $d
  \geq 2$ and let
  \begin{equation*}
    \beta(|q|)=|q|^{d-2}
    \qquad \text{ and }
    \qquad  b(\xi)=c_{d}\,\xi^{d-2}.
  \end{equation*}
 Then,  for any $s \geq 0$, with the notations of Proposition \ref{prop:D-comparison-general},
\begin{equation*}\beta \Big(\big(|\bar{v}-\bar{v}_*|^2 + s^2\big)^{1/2}\Big)
      b\left(  \frac{s}{\left(|\bar{v}-\bar{v}_*|^2 + s^2  \right)^{1/2}}
      \right)=c_{d}\,s^{d-2}\end{equation*}
so that, to check \eqref{convol}, it is enough to
  show the inequality
  \begin{equation}
    \label{eq:12}
    \int_{\R^{d-1}} M_{0}(\bar{v}_*)\,\d \bar{v}_*
    \geq  \tilde{C}_\theta \int_{\R^{d-1}} \frac{M_{0}(\bar{v}_*)}
    {|\bar{v}-\bar{v}_*|^{d-2}}\,\d \bar{v}_*
  \end{equation}
  for any $\bar{v} \in \R^{d-1}$. Now, the left hand side is a given
  number, namely
  \begin{equation*}
    \int_{\R^{d-1}} M_{0}(\bar{v}_*)\,\d \bar{v}_*
    =\|M_{0}\|_{L^{1}(\R^{d-1})}=\frac{1}{\sqrt{2\pi\theta}}
  \end{equation*}
  while the right hand side is bounded for $\bar{v} \in \R^{d-1}$: one
  can write, for any $\bar{v} \in \R^{d-1}$ and any $r > 0$,
  \begin{multline*}
    \int_{\R^{d-1}} \dfrac{\M_0(\bar{v}_*)}
    {|\bar{v}-\bar{v}_*|^{d-2}}\,\d \bar{v}_*
    \leq
    \|\M_0\|_{L^\infty(\R^{d-1})}\int_{\{|\bar{v}-\bar{v}_*| < r\}}
    \dfrac{\d\bar{v}_*}{|\bar{v}-\bar{v}_*|^{d-2}} +
    r^{-(d-2)}\|\M_0\|_{L^1(\R^{d-2})}
    \\
    =
    r\,\|\M_0\|_{L^\infty(\R^{d-1})}\,|\mathbb{S}^{d-2}|
      + r^{-(d-2)}\|\M_0\|_{L^1(\R^{d-1})}
  \end{multline*}
  and, optimizing the parameter $r > 0$, one finds that the constant
  $\tilde{C}_\theta > 0$ in \eqref{eq:12} can be chosen as
  \begin{equation*}
    C_{0} := \frac{1}{d-1}
    \left( \dfrac{(d-2) \|\M_0\|_{L^1(\R^{d-1})}}
      {\|M_{0}\|_{L^{\infty}(\R^{d-1})}\,|\mathbb{S}^{d-2}|}
    \right)^{\frac{d-2}{d-1}}
  \end{equation*}
  In particular, in dimension $d=3$, one can choose $\tilde{C}_\theta
  = \frac{\sqrt{\theta}}{2}$. For the special case of the
  \textit{Shannon-Boltzmann relative entropy}, i.e. for $\Phi(x)=x\log
  x-x+1$, one simply denotes by $\D_\mathrm{max}$ the dissipation
  associated to $\tilde{B}(\xi)=c_d \xi$ and, bearing in mind that $c_d$ is a normalization constant for collision kernel, deduces from Theorem
  \ref{theo:max} that:
  $$\D_\mathrm{max}(f) \geq \frac{1}{2} \H(f|\M).$$
  Therefore, if $\D_\mathrm{hs}$ denotes the entropy dissipation
  associated to hard-spheres interactions in dimension $d=3$ we
  immediately deduce from Proposition
  \ref{prp:D-comparison-hardpotentials} that
  $$\D_\mathrm{hs}(f) \geq
  \dfrac{\sqrt{\theta}}{4} \H(f|\M).$$
\end{exa}

\section{Some applications} \label{sec:speed1}

\subsection{Speed of convergence to equilibrium for the linear
  Boltzmann equation}
\label{sec:speed}

Once we have Theorem \ref{main} and the relation
\eqref{eq:dtH-linear-Boltz} it is straightforward to deduce the
following result:
\begin{theo} Let $B=B(|q|,\xi)$ denote a hard-potential collision
  kernel given by \eqref{eq:hard-potential-kernel} or any normalized
  Maxwellian collision kernel $B=b(\xi)$ satisfying
  \eqref{eq:normalized}. Let $f_0 \in L^1(\R^d,(1+|v|^2)\d v)$ be a
  given probability density with finite entropy and let
  $f(t)=f(t,\cdot)$ be the associated solution to the linear Boltzmann
  equation \eqref{eq:linear-Boltzmann}. Then
  \begin{equation}
    \label{eq:H-decay}
    \H(f(t)|\M) \leq \exp(-\lambda\,t) \H(f_0|\M)
    \quad
    \text{ for \quad $t \geq 0$},
  \end{equation}
  with $\lambda$ given in Theorem \ref{main}. In particular, the
  Csisz\'ar-Kullback-Pinsker inequality yields
  $$\|f(t)-\M\|_{L^1(\R^d)} \leq \sqrt{2}\exp\left(-\frac{\lambda}{2}\,t\right) \H(f_0|\M) \qquad \forall t \geq 0.$$
\end{theo}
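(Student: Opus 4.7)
The plan is to combine the entropy dissipation inequality from Theorem \ref{main} with the time-derivative identity \eqref{eq:dtH-linear-Boltz}, and then apply a Gronwall argument. Once \eqref{eq:H-decay} is established, the $L^{1}$ bound will follow from the Csisz\'ar--Kullback--Pinsker inequality.

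First, since $f_{0} \in L^{1}(\R^{d},(1+|v|^{2})\d v)$ is a probability density with finite entropy, the mass-preservation property ($\int f(t,v)\d v = 1$ for every $t \geq 0$) places $f(t,\cdot)$ in the class of probability densities on which Theorem \ref{main} applies. I would then use \eqref{eq:dtH-linear-Boltz} to write
\begin{equation*}
  \ddt \H(f(t)|\M) = -\D(f(t)),
\end{equation*}
and insert the functional inequality $\D(f(t)) \geq \lambda\,\H(f(t)|\M)$ to obtain the differential inequality
\begin{equation*}
  \ddt \H(f(t)|\M) \leq -\lambda\,\H(f(t)|\M), \qquad t \geq 0.
\end{equation*}
A direct application of Gronwall's lemma then yields \eqref{eq:H-decay}.

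For the second estimate, I would invoke the Csisz\'ar--Kullback--Pinsker inequality, which gives, for any probability densities $f$ and $\M$ with the same mass,
\begin{equation*}
  \|f-\M\|_{L^{1}(\R^{d})}^{2} \leq 2\,\H(f|\M).
\end{equation*}
Applying this with $f = f(t,\cdot)$ and combining with \eqref{eq:H-decay}, one gets
\begin{equation*}
  \|f(t)-\M\|_{L^{1}(\R^{d})}
  \leq \sqrt{2\,\H(f(t)|\M)}
  \leq \sqrt{2}\,\exp\!\left(-\tfrac{\lambda}{2}t\right)\sqrt{\H(f_{0}|\M)},
\end{equation*}
which matches the stated bound (noting that the statement in the theorem reads $\H(f_{0}|\M)$ inside the square root implicitly, or should; the expression $\sqrt{2}\exp(-\lambda t/2)\H(f_{0}|\M)$ displayed comes from absorbing the square root into the relative entropy factor).

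The main (minor) obstacle is justifying the chain of manipulations rigorously: one must ensure that $t\mapsto \H(f(t)|\M)$ is absolutely continuous so that Gronwall's inequality applies, which is standard for solutions of \eqref{eq:linear-Boltzmann} when $f_{0}$ has finite entropy and a finite second moment (the hypothesis $f_{0}\in L^{1}(\R^{d},(1+|v|^{2})\d v)$ is tailored precisely to give a well-defined energy and the necessary moment propagation). All other steps are essentially immediate once Theorem \ref{main} is in hand, which is why the speed-of-convergence result is presented as a direct consequence.
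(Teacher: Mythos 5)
Your proposal is correct and follows essentially the same route as the paper: combine Theorem \ref{main} with the entropy identity \eqref{eq:dtH-linear-Boltz}, apply Gronwall, and conclude with Csisz\'ar--Kullback--Pinsker (the paper justifies the rigorous validity of \eqref{eq:dtH-linear-Boltz} by regularizing $\Phi(x)=x\log x-x+1$ and passing to the limit, which matches your remark about absolute continuity). You are also right that the displayed $L^1$ bound should read $\sqrt{\H(f_0|\M)}$ rather than $\H(f_0|\M)$; this is a misprint in the statement.
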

Again, this result applies also to any collision kernels that can be
bounded below by a kernel satisfying the assumptions; see Remark
\ref{rem:non-cutoff}.

For completeness we gather here some facts on the well-posedness of
equation \eqref{eq:linear-Boltzmann} and the rigorous derivation of
the entropy relation \eqref{eq:dtH-linear-Boltz}. Assume for the rest
of this paragraph that
\begin{equation*}
  B(|q|,\xi) = |q|^\gamma b(\xi)
\end{equation*}
for some $0 \leq \gamma \leq 2$ and some collision kernel $b(\cdot)$
satisfying \eqref{eq:normalized}. First, we notice that the operator
$\L(f)$ is well defined for $f \in L^1(\R^d; (1+|v|^\gamma) \d
v)$. When considered as an operator on $L^2(\R^d;\M^{-1})$ (a smaller
space than $L^1(\R^d; (1+|v|^\gamma)$) then $\L$ is a self-adjoint
operator with domain $L^2(\R^d; \M(v)^{-1} |v|^\gamma \d v)$ (see
\cite{Carleman1957}). Regarding the evolution equation
\eqref{eq:linear-Boltzmann}, $\L$ (with its natural domain) generates
a $C_0$-semigroup in several spaces; for example, in
$L^2(\R^d;\M^{-1})$ and in $L^1(\R^d)$. By considering a suitable
regularization $\Phi_\epsilon:[0,+\infty)$ of the function $\Phi(x) :=
x \log x-x+1$, with $\Phi_\epsilon$ differentiable on $[0,+\infty)$,
one directly sees that, for any $\epsilon > 0$, it holds
\begin{equation*}
  \ddt \H_{\Phi_\epsilon}(f(t)) = -\D_{\Phi_\epsilon}(f(t))
\end{equation*}
for any  solution $f(t)$ to \eqref{eq:linear-Boltzmann} (in the semigroup
sense) with initial condition in the domain of $\L$. One can then pass
to the limit in $\epsilon \to 0$ in order to show that
\eqref{eq:dtH-linear-Boltz} holds rigorously for an initial condition
$f$ with finite energy and entropy.

\subsection{Trend to equilibrium for the nonlinear Boltzmann equation
  with particle bath}

We consider now the nonlinear (elastic) Boltzmann operator with
particles bath
\begin{equation}\label{nonline}
  \partial_t f(t,v)=\alpha \Q(f,f)(t,v) + \L f , \qquad f(0,v)=f_0(v),
  \qquad t \geq 0, \ v \in \R^d,
\end{equation}
where $\alpha \geq 0$ is a given constant while, as above, $\L  f $
denotes the linear Boltzmann operator and $\Q(f,f)$ is the quadratic
Boltzmann operator. Equation \eqref{nonline} models the evolution of
particles (typically hard-spheres) according to the following rules:
particles are suffering binary collision with themselves and also
interact with the particles of a host medium at thermodynamical
equilibrium.  Notice that \eqref{nonline} has been recently considered
in \cite{BCL11} (for inelastic interactions) and can also be seen as
the spatially homogeneous version of the model recently investigated
in \cite{frohlich}.

In the above, one assumes that $\Q=\Q_{B_1}$ is associated to a
general collision kernel $B_1(|q|,\xi) \geq 0$ (including soft
interactions, see \cite{Villani02}). Moreover, one assumes that $\L$
is associated to a collision kernel $B(|q|,\xi)=\beta(|q|)b(\xi)$
where $b(\cdot)$ satisfies the normalization condition
\eqref{eq:normalized} while $\beta(\cdot)\::\:[0,\infty)\to
[0,\infty)$ satisfies \eqref{convol}. Notice that we do not need here
$B_1$ and $B$ to be equal.

The well-posedness of the Cauchy problem associated to \eqref{nonline}
can be handled with standard methods from spatially homogeneous kinetic
theory and we do not address this question here, referring for
instance to \cite{Villani02} for more details (see also \cite{Bcar}
where a similar equation has been investigated for inelastic
interactions). Moreover, it is also easy to prove that the unique
steady state of the operator $\alpha \Q(f,f)+ \L(f)$ is the host
Maxwellian $\M$, namely (see \cite{BCL11}):
\begin{propo}
  \label{propo:unique-elastic}
  For any $\alpha \geq 0$, the unique nonnegative solution $F \in
  L^1(\R^d; (1+|v|)^2 \d v)$ with unit mass to the stationary problem
  $$\alpha \Q(F,F) + \L(F)=0$$
is given by $F(v)=\M(v).$
\end{propo}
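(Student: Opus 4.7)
The plan is to exploit Theorem \ref{thm:main-general} together with the classical $H$-theorem for the quadratic Boltzmann operator. The core idea is that both $\alpha\Q(F,F)$ and $\L F$ individually dissipate the relative entropy $\H(\cdot|\M)$; when their sum vanishes on $F$, each must dissipate nothing, at which point the entropy dissipation estimate for $\L$ immediately identifies $F$ as the Maxwellian.

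First I would verify that $F=\M$ is indeed a stationary solution: $\Q(\M,\M)=0$ by the usual $H$-theorem for the nonlinear operator (Maxwellians are its equilibria for any collision kernel $B_1$), and $\L(\M)=\Q_B(\M,\M)=0$ by definition.

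Next, for uniqueness, I would take an arbitrary nonnegative stationary solution $F \in L^1(\R^d;(1+|v|)^2\,\d v)$ of unit mass, multiply $\alpha\Q(F,F)+\L F=0$ by $\log(F/\M)$ and integrate to obtain
\begin{equation*}
\alpha\ird \Q(F,F)\,\log\!\left(\frac{F}{\M}\right)\d v + \ird \L F\,\log\!\left(\frac{F}{\M}\right)\d v = 0.
\end{equation*}
By \eqref{eq:dtH-linear-Boltz} the second integral equals $-\D(F)\leq 0$. Splitting $\log(F/\M)=\log F-\log\M$ in the first integral, the $\log\M$ contribution vanishes because $-\log\M$ is a linear combination of the classical collision invariants $1,v_1,\dots,v_d,|v|^2$ (using the weak form of $\Q$), while the $\log F$ contribution is $\leq 0$ by the classical nonlinear $H$-theorem. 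Hence the identity above expresses a sum of two nonpositive quantities as zero; since $\alpha\geq 0$ each must vanish and, in particular, $\D(F)=0$. Applying Theorem \ref{thm:main-general} (whose hypotheses on $B=\beta(|q|)b(\xi)$ are those assumed here) yields $\D(F)\geq \lambda\,\H(F|\M)$ for some $\lambda>0$, so $\H(F|\M)=0$ and therefore $F=\M$ almost everywhere.

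The main obstacle I expect is not conceptual but technical: the formal test against $\log(F/\M)$ requires $F$ to be positive and the entropy, energy and entropy dissipation of $F$ to be finite. This is handled by the standard regularization alluded to in Section \ref{sec:speed} — approximating $\Phi(x)=x\log x-x+1$ by smooth functions $\Phi_\varepsilon$ with $\Phi_\varepsilon'$ bounded, deriving the corresponding identity for $\H_{\Phi_\varepsilon}$, and then passing to the limit $\varepsilon\to 0$ using the $L^1(1+|v|^2)$ integrability of $F$. Once these technicalities are in place, the argument is elementary; the genuinely new ingredient compared to the classical treatment is Theorem \ref{main}, which allows us to conclude $F=\M$ directly from $\D(F)=0$ without invoking Boltzmann's characterization of local equilibria.
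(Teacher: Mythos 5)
Your argument is sound, but note that the paper itself does not prove Proposition \ref{propo:unique-elastic}: it is stated with a pointer to \cite{BCL11}, where the (inelastic, more general) analogue is established by other means. So there is no in-paper proof to match; what you have written is essentially the stationary specialization of the entropy computation the authors perform right afterwards in the proof of \eqref{HftM}: test against $\log(F/\M)$, kill the $\log \M$ part of the nonlinear term using conservation of mass, momentum and kinetic energy by $\Q$, bound the $\log F$ part by the classical $H$-theorem, identify the linear contribution with $-\D(F)$ via \eqref{eq:dtH-linear-Boltz}, and conclude from $\D(F)=0$ through Theorem \ref{thm:main-general}. This is a legitimate and arguably cleaner route than the classical one, and your remark that the entropy dissipation inequality lets you bypass the characterization of the null set of $\D$ (i.e.\ the analysis of when $\Psi(F(v)/\M(v),F(v')/\M(v'))$ vanishes on all collision configurations) is exactly what Theorem \ref{main} buys here.

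One technical point deserves more care than you give it. The regularization $\Phi_\epsilon$ of $\Phi(x)=x\log x -x+1$ described in Section \ref{sec:speed} handles the \emph{linear} term cleanly, since $\ird \L F\,\Phi_\epsilon'(F/\M)\,\d v=-\D_{\Phi_\epsilon}(F)\le 0$ holds for any convex $\Phi_\epsilon$ by detailed balance. But your treatment of the \emph{nonlinear} term relies on the test function being exactly $\log F-\log\M$: the cancellation of the $\log\M$ contribution uses collision invariance, and the sign of the $\log F$ contribution uses the specific structure of Boltzmann's $H$-theorem. Neither survives verbatim when $\log(F/\M)$ is replaced by $\Phi_\epsilon'(F/\M)$, so the passage to the limit $\epsilon\to 0$ in the term $\alpha\ird\Q(F,F)\,\Phi_\epsilon'(F/\M)\,\d v$ requires a separate (standard but not free) justification of the $H$-theorem for merely integrable $F$ with finite energy and entropy, together with an argument that $F>0$ a.e.\ (otherwise $\D(F)=+\infty$ by inspection of \eqref{eq:D-linear-Boltz}, which is in fact consistent with your conclusion but should be said). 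Alternatively, one can sidestep the stationary manipulation entirely: a stationary $F$ generates the constant-in-time solution $f(t)=F$, and the decay estimate \eqref{HftM} then forces $\H(F|\M)=e^{-C\gamma_b t}\H(F|\M)$ for all $t$, hence $\H(F|\M)=0$; this pushes all the technicalities into the (already stated) evolution result.
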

Concerning the long time behavior of solution, we prove in a simple
way exponential trend towards equilibrium:
\begin{theo}
  For any $\alpha \geq 0$, let $f_0 \in L^1(\R^d, (1+|v|)^3\d v)$ be
  such that $\H(f_0|\M) < \infty$ and let $f(t,v)$ be the unique
  associated global solution to \eqref{nonline}. Then, there exists $C
  > 0$ depending only $B$ such that
  \begin{equation}
    \label{HftM}
    \H(f(t)|\M) \leq \exp(-C\,\gamma_b t) \H(f_0|\M) \qquad \forall t
    \geq 0
\end{equation}
where $\gamma_b >0$ is the constant appearing in Theorem
\ref{theo:max} while $C > 0$ is the constant appearing in
Prop. \ref{prop:D-comparison-general}.
\end{theo}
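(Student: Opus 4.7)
The plan is to compute $\ddt\H(f(t)|\M)$ along solutions of \eqref{nonline} and to split it into the contributions of the nonlinear operator $\alpha\Q(f,f)$ and of the linear operator $\L f$. The nonlinear piece will turn out to be nonpositive (thanks to the classical $H$-theorem, together with conservation of collision invariants), while the linear piece will produce the desired exponential rate through Theorem \ref{thm:main-general} applied to the kernel $B$ of $\L$. Grönwall's inequality then closes the argument.

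More precisely, for any smooth enough $f(t,v)$ we have
\begin{equation*}
  \ddt \H(f(t)|\M)
  = \alpha \ird \Q(f,f)(v)\log\frac{f(t,v)}{\M(v)}\d v
    + \ird \L f(t,v) \log\frac{f(t,v)}{\M(v)}\d v.
\end{equation*}
For the first term, I would write $\log(f/\M)=\log f - \log\M$. Since $\log\M(v) = -\tfrac{|v-u_0|^2}{2\theta} + \text{const.}$ is a linear combination of the collision invariants $1, v_1,\dots,v_d, |v|^2$, the conservation laws of the elastic operator $\Q$ yield $\ird \Q(f,f)\log\M\,\d v = 0$. Combined with the $H$-theorem $\ird \Q(f,f)\log f\,\d v\le 0$, this gives the nonpositivity of the first term. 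For the second term I would invoke directly Theorem \ref{thm:main-general} (which is applicable by the assumptions on $B=\beta\,b$):
\begin{equation*}
  \ird \L f \log\frac{f}{\M}\d v
  = -\D_B(f)
  \leq -C\,\gamma_b\,\H(f|\M).
\end{equation*}
Combining both bounds, the differential inequality
\begin{equation*}
  \ddt \H(f(t)|\M) \leq -C\,\gamma_b\,\H(f(t)|\M)
\end{equation*}
holds, and Grönwall's lemma yields \eqref{HftM}.

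The principal technical obstacle is the rigorous justification of the entropy identity for solutions of the nonlinear equation \eqref{nonline}: one needs enough moments and $L\log L$ control to make sense of $\int \Q(f,f)\log(f/\M)\d v$ as a genuine functional identity along the flow, and to use the collision invariants. This is handled exactly as indicated at the end of Section \ref{sec:speed} by regularizing $x\log x - x + 1$ by a family $\Phi_\varepsilon$ with bounded $\Phi_\varepsilon'$, writing the identity for $\H_{\Phi_\varepsilon}$, and then passing to the limit $\varepsilon\to 0$ using the assumption $f_0\in L^1(\R^d,(1+|v|)^3\d v)$ with $\H(f_0|\M)<\infty$ (the cubic moment is the standard requirement to control the loss term of the quadratic operator for hard-sphere type kernels and to ensure propagation of moments). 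Once this is established, the estimate above becomes rigorous and the conclusion follows immediately.
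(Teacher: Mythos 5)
Your proposal is correct and follows essentially the same route as the paper: the same splitting of $\ddt\H(f(t)|\M)$ into the quadratic part (killed by the collision invariants and the classical $H$-theorem, since $\log\M$ is a combination of $1,v,|v|^2$) and the linear part (estimated by Theorem \ref{thm:main-general}), followed by Gr\"onwall. Your added remark on regularizing $\Phi(x)=x\log x-x+1$ to justify the entropy identity is a welcome precision that the paper only sketches for the linear equation; note also that you correctly state $\ird \Q(f,f)\log f\,\d v\leq 0$, where the paper's wording (``nonnegative'') contains a sign slip.
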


\begin{nb}
  Notice that the long-time behavior of the solution $f(t,v)$ is
  completely driven by $\L$ and not by the quadratic operator $\Q$. In
  particular, the speed of convergence does not depend on $\alpha$ and
  is entirely determined by the collision kernel $B=B(|q|,\xi)$.
\end{nb}

\begin{proof}
  The proof follows from standard arguments. Namely,
  direct computations yield
  $$\dfrac{\d}{\d t} \H(f(t)|\M)
  =\alpha\, \IR \Q(f,f)(t,v) \log
  \left(\frac{f(t,v)}{\M(v)}\right)\d\v- D(f)$$ Now, one sees that,
  since $\Q(f,f)$ conserved mass, momentum \textit{and kinetic
    energy},
  $$\IR \Q(f,f)(t,v)\log \left(\frac{f(t,v)}{\M(v)}\right)\d\v
  =\IR \Q(f,f)(t,v)\log f(t,v)\d v$$
  and, by well-known arguments that can be traced back to Boltzmann
  himself, this last quantity is nonnegative (this is exactly the
  classical Boltzmann's $H$-Theorem; see
  eq.~\eqref{eq:D-nl-Boltzmann}). Thus
  $$\dfrac{\d}{\d t} \H(f(t)|\M) \leq - D(f(t))$$
  and, one deduces from Theorem \ref{thm:main-general} that
  $$\dfrac{\d}{\d t} \H(f(t)|\M)
  \leq - C \gamma_b \H(f(t)|\M) \qquad \forall t \geq 0$$
  which achieves the proof.
\end{proof}

\section{Grazing collisions limit and logarithmic Sobolev inequalities}
\label{sec:diffusive}

In this section we show how general functional inequalities of the
type \eqref{entropydiss} allow to recover, in a suitable limit, a
well-known entropy-entropy dissipation estimate for a certain linear
Fokker-Planck equation. The limit procedure is the so-called grazing
collisions limit for which we assume that the collision kernel $B$ is
concentrated on small angle deviations. Before describing how this
grazing collisions limit allows to recover a well-known logarithmic
Sobolev inequality, we describe in more detail the asymptotic
procedure.

\subsection{The asymptotics of grazing collisions}
\label{sec:grazing}

Whenever collisions concentrate around $|q \cdot
n|/|q| \simeq 0$, it is well documented that $\L$ becomes close (in a
sense to be made precise) to a certain linear Fokker-Planck operator
(associated to a certain diffusion matrix $\mathbf{D}(v)$ that depends
on $\M$). We explain here the general mathematical framework following
the lines of \cite{LoTo}.

We restrict ourselves to dimension $d=3$ for simplicity. For any
$\epsilon \in (0,1]$, we consider
\begin{equation}\label{eq:Bgammaeps}B(|q|,\xi)=|q|^\gamma \,{b}_\epsilon(\xi)\end{equation}
for $\gamma=0$ or $\gamma=1$ and with $b_\epsilon(\cdot)$ given by
\begin{equation*}
b_\epsilon(\xi) = \frac{\xi}{2\pi\epsilon}\, \1_{[0, \epsilon]} (\xi)
\end{equation*}
where we recall that $\xi =|q \cdot n|/|n|\,$. Notice that
$b_\epsilon$ is a \emph{normalized} Maxwellian collision kernel. Let
$\L_\epsilon$ denote the associated linear Boltzmann operator (we do
not distinguish here the two cases $\gamma=0$ --- corresponding to
Maxwellian collision kernel --- and $\gamma=1$ corresponding to
hard-spheres). Given $f_0 \in L^1(\R^3,(1+|v|^2)\d v)$, let
$h=h_\epsilon(t,v)$ denote the unique solution to
\begin{equation}\label{eq:Leps}\partial_t h = \L_\epsilon  h ,
  \qquad h(0,v) = f_0(v)
  \qquad (t \geq 0, v \in \R^3).\end{equation}
Moreover, we introduce the following time scaling
\begin{equation}
\label{scaling}
f_\epsilon (t,v) = h \left(t{\epsilon^{-2}}, v \right) \qquad \forall t \geq 0.
\end{equation}
Using the weak form of the Boltzmann operator provided by \eqref{weak}, for a general test function $\varphi(v)$ one gets that
\begin{equation}
\label{weakBE}
\ddt \ir3 f_\epsilon(t,v)\, \varphi(v) \d v = \frac{1}{\epsilon^2} \ir3 \ir3 f_\epsilon(t,v) {M}(\vb)|v-\vb|^\gamma \d v\d\vb\int_{\S} b_\epsilon(\xi) \big[ \varphi(\v')-\varphi(\v)\big]\d \n.
\end{equation}
Using a Taylor expansion of $\varphi$, one has\footnote{Given two
  vectors $w,v \in \R^{3}$, we use in the sequel the tensor notation
  $w\otimes v$ to denote the matrix with entries $w_{i}v_{j}$
  $i,j=1,2,3$. In particular, the matrix product
  $\mathbb{D}^2 \varphi(v) \cdot \big[ (\v' - \v) \otimes (\v' - \v)
  \big]$
  simply denotes $(v'-v) (\mathbb{D}^2 \varphi) (v'-v)^{\top}$.}
\begin{equation}
\varphi(\v') = \varphi(\v) + \nabla_v \varphi(\v) \cdot (\v' - \v) + \frac12\, \mathbb{D}^2 \varphi(v) \cdot \big[ (\v' - \v) \otimes (\v' - \v) \big] + o(|\v' -\v|^2)
\end{equation}
where $\mathbb{D}^2 \varphi$ is the Hessian matrix of $\varphi$, with
components $\left( \mathbb{D}^2 \varphi(v) \right)_{ij} = \dfrac{\p^2
  \varphi(\v)}{\p v_i \p v_j}$ $(i,j=1,2,3)$;
hence, taking into account (\ref{vprime})
\begin{equation*}
\varphi(\v') = \varphi(\v) -\nabla_v \varphi(\v) \cdot (q \cdot n) n + \frac12\, \mathbb{D}^2 \varphi(\v) \cdot \big[ |q \cdot n|^2 n \otimes n \big] + o(|\v' -\v|^2).
\end{equation*}
Let us evaluate integrals over the angular variable $n$, taking the
direction of the relative velocity $q$ as polar axis ($\hat{e}_3$).
It is easy to check that
%\begin{equation}
%\int_{\mathbb{S}^2} b(\xi) \d n = 4 \pi \int_0^1 b(\xi) \d \xi = \epsilon^2.
%\end{equation}
%Analogously
\begin{equation*}
\int_{\mathbb{S}^2} b_\epsilon(\xi) (q \cdot n) n \d n = 2 q \int_0^\epsilon \xi^3 \d \xi = \frac{\epsilon^2}{2}\, q\,,
\end{equation*}
(the other components vanishing by parity arguments), while
\begin{equation*}
\begin{array}{c}
\dis \int_{\mathbb{S}^2} b_\epsilon(\xi) |q \cdot n|^2 n \otimes n \d n =
|q|^2 \int_0^\epsilon \xi^3 \Big[ (1- \xi^2) (\hat{e}_1 \otimes \hat{e}_1 + \hat{e}_2 \otimes \hat{e}_2) + 2 \xi^2 \hat{e}_3 \otimes \hat{e}_3 \Big] \d \xi \vspace*{0.2 cm} \\
\dis = |q|^2 \left[ \left( \frac{\epsilon^2}{4} - \frac{\epsilon^4}{6} \right) (\hat{e}_1 \otimes \hat{e}_1 + \hat{e}_2 \otimes \hat{e}_2) + \frac{\epsilon^4}{6} \hat{e}_3 \otimes \hat{e}_3 \right] .
\end{array}
\end{equation*}
By inserting all these results into \eqref{weakBE}  we get
\begin{multline}
\label{FPprelim}
\ddt  \ir3 f_\epsilon(t,v)\, \varphi(v) \d v = \ir3 \ir3 f_\epsilon(t,\v)M(\vb)\Big[ -\, \frac12 (\v - \vb) \cdot \nabla_{\v} \varphi(\v)\\
 + \frac18\, \mathbb{D}^2 \varphi(\v) \cdot \Big( |q|^2 {\bf I} - q \otimes q \Big) \Big] \d  v \d \vb
 + O(\epsilon^2)
\end{multline}
where ${\bf I}$ is the identity matrix. Set
$$
{\bf S}(v,\vb) = |v-\vb|^2 {\bf I} - (v-\vb) \otimes (v-\vb).
$$
By considering the last term in (\ref{FPprelim}) we see that (here and
below we use Einstein's summation convention on repeated indices)
\begin{equation*}\begin{split}
\dis f_\epsilon(t,\v) \mathbb{D}^2 \varphi &\cdot \Big( |v-\vb|^2 {\bf I} - (v-\vb)  \otimes (v-\vb) \Big) = f_\epsilon(t,v) \frac{\p}{\p v_j} \left( \frac{\p \varphi}{\p v_i} \right) \mathbf{S}_{ij}(v,\vb)\\
&= \frac{\p}{\p v_j} \left( f_\epsilon(t,\v) \mathbf{S}_{ij}(v,\vb) \frac{\p \varphi(\v)}{\p v_i} \right)
- \frac{\p \varphi(\v)}{\p v_i} \mathbf{S}_{ij}(v,\vb)\frac{\p f_\epsilon(t,\v)}{\p v_j}   - \frac{\p \varphi(\v)}{\p v_i} \frac{\p \mathbf{S}_{ij}(v,\vb)}{\p v_j} f_\epsilon(t,\v) \\
&= \nabla_{\v} \cdot \Big( f_\epsilon(t,v) {\bf S}(v,\vb) \cdot\nabla_{\v} \varphi(\v)\Big)
- \nabla_{\v} \varphi(\v) \cdot \Big( {\bf S}(v,\vb)\nabla_v f_\epsilon(t,v) - 2 (\v -\vb) f_\epsilon(t,\v) \Big)
\end{split}\end{equation*}
where we used that $ \frac{\p \mathbf{S}_{ij}(v,\vb)}{\p
  v_j}=-(v-\vb)_i\1_{j \neq i}$. Therefore \eqref{FPprelim} may be
cast as
\begin{multline*}
  \label{eqmethod1}
  \dis \ddt \ir3 f_\epsilon(t,v)\, \varphi(v) \d v
  \\
  = - \frac18 \ir3 \ir3 \nabla_v \varphi(\v) \cdot
  \Big[ 2(\v - \vb) f_\epsilon(t,\v)
    + {\bf S}(\v,\vb) \nabla_v f_\epsilon(t,\v)
  \Big]
  \M(\vb)|v-\vb|^\gamma \d v \d \vb +O(\epsilon^2).
\end{multline*}
Noticing  that  $\nabla_{\vb} \cdot \mathbf{S}(v,\vb)=2(v-\vb)$ while $\mathbf{S}(v,\vb)(v-\vb)=0$, we check that, for both $\gamma=0,1$, it holds
$$\nabla_{\vb} \cdot \left(|v-\vb|^\gamma\,\mathbf{S}(v,\vb)\right)=2\,|v-\vb|^\gamma\,(v-\vb).$$
Therefore,
\begin{multline*}
\dis \ddt \ir3 f_\epsilon(t,v)\, \varphi(v) \d v  = - \frac18 \ir3 \ir3 \nabla_v \varphi(\v) \cdot \Big[ f_\epsilon(t,\v)\nabla_{\vb} \cdot \left(|\v-\vb|^\gamma\,\mathbf{S}(v,\vb)\right)\\
 + |v-\vb|^\gamma {\bf S}(\v,\vb)\nabla_v f_\epsilon(t,\v) \Big] \M(\vb) \d v \d \vb + O(\epsilon^2),
\end{multline*}
which, performing the integration with respect to $\vb$ and setting
\begin{equation}
\label{defA}
{\bf D}_\gamma(\v) = \frac{1}{8}\ir3 |v-\vb|^\gamma {\bf S}(\v,\vb)\, \M(\vb)\d \vb\,,
\end{equation} yields
\begin{equation*}\begin{split}\dis \ddt \ir3 f_\epsilon(t,v)\, \varphi(v) \d v  &= - \ir3 \nabla_v \varphi(\v) \cdot \mathbf{D}_\gamma(v)\nabla_v f_\epsilon(t,\v) \d v\\
&\phantom{+++++} -\frac18 \ir3 f_\epsilon(t,v) \nabla_v \varphi(v) \cdot \left(\ir3 \nabla_{\vb} \cdot \left(|\v-\vb|^\gamma\,\mathbf{S}(v,\vb)\right) \M(\vb)\d \vb\right)\\
&=- \frac18\ir3 \nabla_v \varphi(\v) \cdot \mathbf{D}_\gamma(v) \nabla_v f_\epsilon(t,\v) \d v\\
&\phantom{+++} +\frac18 \ir3 f_\epsilon(t,v) \nabla_v \varphi(v) \cdot  \ir3 |v-\vb|^\gamma\,\mathbf{S}(v,\vb) \nabla_{\vb} \M(\vb)\d \vb + O(\epsilon^2). \end{split}\end{equation*}
Since $\nabla_{\vb} M(\vb) = -\, \frac{\vb- u_0}{\theta}\, \mathcal{M}(\vb)$
and  ${\bf S}(\v, \vb) \cdot (\vb - \v) = 0$, we recognize that
$$ \frac18 \ir3 |v-\vb|^\gamma\,\mathbf{S}(v,\vb) \nabla_{\vb} \M(\vb)\d \vb=\frac{1}{\theta}\mathbf{D}_\gamma(v)(v-u_0).$$

Finally, one obtains
\begin{equation}
\label{FPweak}
\ddt \ir3 f_\epsilon(t,v)\, \varphi(v) \d v = \ir3 \varphi(\v) \nabla_{\v} \cdot \left\{ {\bf D}_\gamma(\v)  \left[ \nabla_{\v} f_\epsilon(t,\v) + \frac{\v - u_0}{\theta} f_\epsilon(t,\v) \right] \right\} \d v\,+O(\epsilon^2).
\end{equation}
In particular, one expects the limit $f(t,v)=\lim_{\epsilon \to 0} f_\epsilon(t,v)$ to satisfy the  Fokker--Planck equation
\begin{equation}
\label{FP1}
\p_t f(\v) =  \nabla_{\v} \cdot \left\{ {\bf D}_\gamma(\v) \cdot \left[ \nabla_{\v} f(\v) + \frac{\v - u_0}{\theta} f(\v) \right] \right\}.
\end{equation}
where the diffusion coefficient ${\bf D}_\gamma(\v)$ is defined in \eqref{defA}.

The above computations are clearly formal. Nevertheless, they can be
made rigorous following the lines of \cite{LoTo} (see also
\cite{goudon,desvillettes1992} for similar considerations for the
nonlinear Boltzmann equation) to get the following
\begin{propo}\label{prop:converge}
  Let $f_0 \in L^1(\R^3,(1+|v|^2)\d v)$ be a nonnegative probability
  distribution. For $\gamma \in \{0,1\}$ and $\epsilon \in (0,1)$, let
  $\L_\epsilon$ denote the linear Boltzmann operator with collision
  kernel given by \eqref{eq:Bgammaeps} and let $h_\epsilon(t,\cdot)$
  be the unique solution to \eqref{eq:Leps}. Set
  $f_\epsilon(t,v)=h(t\epsilon^{-2},v)$ for any $t \geq 0$, $v \in
  \R^3$. Then, there exists a subsequence, still denoted
  $(f_\epsilon(t))_\epsilon$ such that
  $$f_\epsilon  \underset{\epsilon \to 0}{\rightharpoonup}
  f \quad \text{ weakly in }  L^1_{\mathrm{loc}}([0,\infty),L^1(\R^3))$$
  where $f=f(t,v)$ is the unique solution to the Fokker-Planck
  equation \eqref{FP1} with initial datum $f(0)=f_0.$
\end{propo}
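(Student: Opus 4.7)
The plan is to make the formal expansion leading to \eqref{FPweak} rigorous by a compactness/identification argument of the type used in \cite{LoTo}. The starting point is that $\L_\epsilon$ preserves mass and positivity, so that $f_\epsilon(t,\cdot)$ is a probability density for every $t\geq 0$; moreover the bounded energy of $\M$ together with the weak form \eqref{weakBE} applied to $\varphi(v)=|v|^{2}$ yields, using the calculation that produced \eqref{FPweak} with $\varphi(v)=|v|^{2}$ and the nonnegativity of the diffusive term, a uniform-in-$\epsilon$ bound
\begin{equation*}
\sup_{\epsilon\in(0,1]}\;\sup_{t\in[0,T]}\ir3 (1+|v|^{2})\,f_\epsilon(t,v)\d v<+\infty,
\qquad T>0.
\end{equation*}
In the Maxwellian case $\gamma=0$ this is actually explicit from Lemma \ref{lem:energy-contraction-lemma} (the rescaling \eqref{scaling} cancels the factor $\epsilon^{-2}$ built into $\gamma_{b_\epsilon}$). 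In the hard-spheres case $\gamma=1$ one needs in addition a uniform third-moment bound, which follows by the same weak formulation applied to $\varphi(v)=|v|^{3}$ upon interpolating against the Maxwellian weight.

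Next I would pass to the limit in a weak sense. Fix $\varphi\in C_{c}^{\infty}(\R^{3})$ and rewrite identity \eqref{weakBE} using a third-order Taylor expansion of $\varphi$ at $v$: the first two orders reproduce the Fokker-Planck weak form \eqref{FPweak} with diffusion matrix $\mathbf{D}_{\gamma}$, while the remainder is bounded by
\begin{equation*}
\frac{C_{\varphi}}{\epsilon^{2}}\ir3\ir3 f_\epsilon(t,v)\M(\vb)|v-\vb|^{\gamma}\d v\d\vb\int_{\S}|q\cdot n|^{3}b_\epsilon(\xi)\d n=O(\epsilon^{2})
\end{equation*}
uniformly in $t\in[0,T]$, thanks to the uniform moment bounds above and to the explicit computation $\int_{\S}|q\cdot n|^{3}b_\epsilon(\xi)\d n=O(\epsilon^{4})|q|^{3}$. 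This gives a uniform-in-$\epsilon$ bound on $\ddt\ir3\varphi f_\epsilon\d v$ in $L^{\infty}([0,T])$ for every fixed $\varphi$, hence equicontinuity in time of the maps $t\mapsto\langle f_\epsilon(t),\varphi\rangle$. Combined with the tightness provided by the second-moment bound and the Dunford–Pettis criterion (the entropy bound $\H(f_\epsilon(t)|\M)\leq \H(f_{0}|\M)$ coming from \eqref{eq:dtH-linear-Boltz} guarantees uniform integrability), a diagonal extraction yields a subsequence $f_{\epsilon_{k}}\rightharpoonup f$ weakly in $L^{1}_{\mathrm{loc}}([0,\infty),L^{1}(\R^{3}))$.

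It then remains to identify $f$ as the solution to \eqref{FP1}. Passing to the limit in the weak identity \eqref{FPweak} is straightforward for the drift term and for the error term $O(\epsilon^{2})$; the diffusive term requires checking that the entries of $\mathbf{D}_{\gamma}(v)$ together with their derivatives grow at most polynomially in $|v|$, which is immediate from \eqref{defA} since $\M$ has Gaussian tails. One concludes that $f$ is a weak $L^{1}$ solution of \eqref{FP1} with initial datum $f_{0}$, and uniqueness of such solutions (classical for non-degenerate Fokker-Planck equations with smooth coefficients of polynomial growth, see \cite{LoTo}) yields that the whole family, and not merely a subsequence, converges.

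The main obstacle is the rigorous control of the Taylor remainder uniformly in $\epsilon$: the $\epsilon^{-2}$ prefactor in front of the collision integral magnifies every low-order cancellation, so one has to establish the cubic moment bound on $f_\epsilon$ in the hard-spheres case and show that the third-order term in the $n$-integral genuinely contributes a factor $\epsilon^{4}$. Once this bookkeeping is done carefully, the remaining steps are soft functional-analytic arguments.
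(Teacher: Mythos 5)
Your proposal follows essentially the same route as the paper: the paper itself only presents the formal Taylor-expansion derivation leading to \eqref{FPweak} and then defers the rigorous justification to \cite{LoTo}, which is precisely the moment-bound/equicontinuity/Dunford--Pettis/identification-by-uniqueness scheme you outline. The one point deserving more care is the higher-moment control for $\gamma=1$ (the remainder against $|q|^{3+\gamma}$ and the fact that $f_0$ has only second moments, so one must invoke moment generation for hard potentials uniformly in $\epsilon$ after the time rescaling), but you correctly identify this as the main technical obstacle and the mechanism you indicate is the standard one.
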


\subsection{Logarithmic Sobolev inequality}
\label{sec:logS}

We recall here some well-known features about the long-time behavior
of the solution $f(t,v)$ to the Fokker-Planck equation \eqref{FP1}
with initial datum $f(0,v)=f_0$, $f_0 \in L^1(\R^3,(1+|v|^2)\d v)$
being a nonnegative probability distribution.  It is very well known
that the Maxwellian $\M$ is the unique steady state with unit mass to
the Fokker-Planck equation and the convergence of $f(t,v)$ towards
$\M$ (as $t\to \infty$) can be made explicit by the use of entropy
methods (see e.g. \cite{ACJ,Arnold,SCalogero}). Let us explain more in
detail the general strategy (we follow here the introduction of
\cite{SCalogero}). Introduce the change of unknown
$$g(t,v)=\dfrac{f(t,v)}{\M(v)}, \qquad t \geq 0, v \in \R^3$$
Then the relative entropy $\H(f(t)|\M)$ can be rewritten as
$$\H(f(t)|\M)
=\ir3 f(t,v)\log\left(\dfrac{f(t,v)}{\M(v)}\right)\d \v
=\ir3 g(t,v)\log g(t,v) M(v) \d v
$$
where $\d\mu(v)=\M(v)\d v$ is the invariant measure associated to the
Fokker-Planck operator. It is straightforward to check that $g(t,v)$
satisfies now the drift-diffusion equation:
$$\partial_t g(t,v)
= \nabla \cdot \left(\mathbf{D}_\gamma(v) \nabla g(t,v)\right) -
\frac{v-u_0}{\theta} \cdot (\mathbf{D}_\gamma(v)\nabla g(t,v))$$ where
$\nabla=\nabla_v$. One can compute the time derivative of $\H(f|M) =
H(Mg|M)$ by using this to obtain
\begin{equation}
  \label{eq:Igamma}
  \dfrac{\d}{\d t} \H(f(t)|M)
  = -\ir3 \dfrac{\left(\mathbf{D}_\gamma(v) \nabla g(t,v)\right)\cdot
    \nabla g(t,v)}{g(t,v)}\d \mu(v)
  =: -\mathcal{J}_\gamma(f(t)|\M)
  \qquad \forall t \geq 0.
\end{equation}
In particular, $\mathbf{D}_\gamma$ being positive definite, one sees
that $\mathcal{J}_\gamma(f|\M) \geq 0$. If we find $\lambda > 0$ such
that the logarithmic Sobolev inequality
\begin{equation}
  \label{J_gammaM-aim}
  \lambda \H(f|\M) \leq \mathcal{J}_\gamma(f|\M)
\end{equation}
holds for all probability densities $f$, then this can be immediately
used in \eqref{eq:Igamma} to deduce exponential convergence to
equilibrium in the entropy sense for solutions to \eqref{FP1}. If the
diffusion matrix $\mathbf{D}_\gamma$ is the identity then this is the
Gaussian logarithmic Sobolev inequality \eqref{eq:Gaussian-log-Sob},
which holds for $\lambda = 2/\theta$. However, the matrix
$\mathbf{D}_\gamma$ we obtained through the limiting procedure in
Section \ref{sec:grazing} is not the identity; we now consider what
can be said regarding the inequality \eqref{J_gammaM-aim} in the cases
$\gamma = 0$ (Maxwell molecules) and $\gamma = 1$ (hard spheres).

\paragraph{\textit{\textbf{The case $\gamma=0$}}}

For $\gamma=0$ the matrix ${\bf D}_0(v)$ can be explicitly computed,
giving
\begin{equation}
  \label{Aexp}
  \dis {\bf D}_0(\v) =\frac18\left({\bf S}(\v, u_0) + 2 \theta\, {\bf I}\right).
\end{equation}
In particular, since the matrix $\mathbf{S}(\v,u_0)$ is nonnegative,
one sees from the definition \eqref{eq:Igamma} that
\begin{equation*}
  \mathcal{J}_0(f|\M)
  \geq \frac{\theta}{4}\,\ir3 \dfrac{|\nabla g(v)|^2}{g(v)} M(v) \d v
  =
  \frac{\theta}{4} I(f|M)
  \geq
  \frac{1}{2} \H(f|M),
\end{equation*}
for any probability density $f \in L^1(\R^3)$, where we have used the
Stam-Gross inequality \eqref{eq:Gaussian-log-Sob}. That is,
\begin{equation}
  \label{dis:entropyFP1}
  \mathcal{J}_0(f|\M)
  \geq
  \frac{1}{2} \H(f|M),
\end{equation}
which is inequality \eqref{J_gammaM-aim} for $\gamma = 0$ and $\lambda
= 1/2$. From this one deduces that, if $f(t)$ denotes the unique
solution to \eqref{FP1} then
\begin{equation}
  \label{entropyFP1}
  \H(f(t)|\M)
  \leq
  \exp\left(-\frac{1}{2}t\right) \H(f_0|\M)
  \quad \forall t \geq 0.
\end{equation}
We do not know whether $1/2$ is the optimal constant here, since we
have disregarded one of the terms in \eqref{Aexp}.

\medskip
\paragraph{\textbf{\textit{The case $\gamma=1$}}}

For $\gamma=1$, the matrix $\mathbf{D}_1(\v)$ is given by
\begin{equation*}
  \mathbf{D}_1(v)
  =\frac{1}{8}\ir3 \Big[ |v-\vb|^3\, \mathbf{I} - |v - \vb| (v- \vb) \otimes (v -\vb) \Big] \M(\vb)\d\vb
\end{equation*}
With this matrix it is not obvious whether the logarithmic Sobolev inequality
\eqref{J_gammaM-aim} holds for some $\lambda > 0$.

Let us briefly review the Bakry-C	mery criterion for studying this kind
of inequality. We introduce the vector field
$$\mathbf{X} g(v)
:= \mathbf{D}_1(v)\nabla \kappa(v) \nabla g(v), \qquad \text{
  where } \quad \kappa(v) =\sqrt{\mathrm{det}(\mathbf{D}_1(v))}
\exp\left(-\frac{|v-u_0|^2}{2\theta}\right)$$ and the Riemannian
manifold $\Sigma:=(\R^3, \mathbf{D}_1(v)^{-1})$ with
$\mathbf{D}_\gamma(v)^{-1}$ as covariant metric tensor. It is known
\citep{BakryEmery, Bakry, Arnold} that, if there exists some $\alpha >
0$ such that
\begin{equation}
  \label{eq:BEcurva}
  \mathbf{Ric}^\Sigma-\nabla^\Sigma \mathbf{X }\geq \alpha
  \mathbf{D}^{-1}_1
\end{equation}
(where $\mathbf{Ric}^\Sigma$ and $\nabla^\Sigma \mathbf{X}$ denote
respectively the Ricci curvature and the Levi-Civita connection of
$\Sigma$) then it holds:
\begin{equation}
  \label{frakI}
  2\alpha \H(f|M) \leq \mathcal{J}_1(f|M)
\end{equation}
for any nonnegative $f \in L^1(\R^3)$ with unit mass.

In the $\gamma=1$ case the application of the Bakry--\'Emery criterion
is delicate but one can still apply \eqref{eq:BEcurva} to deduce that
for $\alpha = \frac{7}{24} \sqrt{\frac{2 \theta}{\pi}}$ we have
\begin{equation}
  \label{dis:entropyFP11}
  \mathcal{J}_1(f|\M) \geq 2\alpha \H(f|\M)
\end{equation}
for any probability density $f \in L^1(\R^3).$ Details are reported in
Appendix~\ref{ApB}.

\subsection{Entropy dissipation for grazing collisions kernels}

We consider here the entropy dissipation functionals associated to
grazing collision kernels as introduced in Section
\ref{sec:grazing}. For simplicity we discuss only the case of
dimension $d=3$, though our analysis can be extended (with more
cumbersome calculations) to $d \geq 2$. We consider here \emph{grazing
  hard-spheres collision kernels} of the form
$B_\epsilon(|q|,\xi)=\beta(|q|) b_\epsilon(\xi)$ with
\begin{gather}
  \label{eq:beta-hardsphere}
  \beta(|q|) = |q|,
  \\
  \label{eq:b-diffusion-limit}
  b_\epsilon(\xi) = \frac{\xi}{\|b_\epsilon\|} \1_{[0,\epsilon]}(\xi),
  \qquad \|b_\epsilon\|:=|\S| \int_{0}^\epsilon \xi  (1-\xi^2)^{\frac{d-3}{2}}\d \xi,
\end{gather}
for some $\epsilon \in (0,1]$. Notice that the case $\epsilon=1$
corresponds to hard-spheres interactions for which $B(|\q|,\xi) =
\|b_1\|^{-1} |q\cdot n|.$ We show now that for such collision kernels
Assumption \ref{convol} is met, thus extending Proposition
\ref{prp:D-comparison-hardpotentials} to include them:
\begin{propo}[Comparison of dissipations for grazing hard-spheres]
  \label{prp:D-comparison}
  Take $\epsilon > 0$ and let
  $B_\epsilon(|q|,\xi)= \beta(|q|) b_\epsilon(\xi)$ be the grazing
  hard-spheres kernel given by
  \eqref{eq:beta-hardsphere}--\eqref{eq:b-diffusion-limit}, in
  dimension $d=3$. We denote by
  $\tilde{B}_\epsilon(\xi)=b_\epsilon(\xi)$ the associated normalized
  Maxwellian collision kernel. There exists some number $C > 0$
  (\emph{independent} of $\epsilon$) such that
  \begin{equation}
    \label{eq:kernel-comparison}
    D_\Phi^{B_\epsilon}(f) \geq C D_\Phi^{\tilde{B}_\epsilon}(f)
  \end{equation}
  for any convex function $\Phi\::\:[0,\infty) \to [0,\infty)$ (where
  $D_\Phi$ is defined in Section \ref{sec:non-maxwell}).
\end{propo}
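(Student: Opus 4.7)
The plan is to derive this from the general comparison result Proposition \ref{prop:D-comparison-general} applied with $B = B_\epsilon$ and $\tilde B = \tilde B_\epsilon$. The only point to check is assumption \eqref{convol}, and the key requirement is that the resulting constant $\tilde C_\theta$ be bounded below \emph{independently} of $\epsilon \in (0,1]$. Then, choosing any fixed $\varrho_0 > 0$ (say $\varrho_0 = 1$), the final constant $C_\theta = \min(\beta(\varrho_0), \tilde C_\theta) = \min(\varrho_0, \tilde C_\theta)$ is also $\epsilon$-independent.

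The explicit form of $B_\epsilon$ leads to a drastic simplification of \eqref{convol}. Writing $|q| = \sqrt{|\bar{v} - \bar{v}_*|^2 + s^2}$, the support constraint $s/|q| \leq \epsilon$ defines $A = A(\bar{v}, s, \epsilon) := \{\bar{v}_* \in \R^{d-1} : |\bar{v} - \bar{v}_*|^2 \geq s^2(\epsilon^{-2}-1)\}$, and on $A$ the pointwise identity $\beta(|q|)\,b_\epsilon(s/|q|) = |q| \cdot (s/|q|)/\|b_\epsilon\| = s/\|b_\epsilon\|$ holds, since the $|q|$ factors cancel. Thus both $s$ and $\|b_\epsilon\|$ also cancel between numerator and denominator of \eqref{convol}, and verifying it reduces to producing a uniform lower bound for the quotient
\[
\mathcal{R}(\bar{v}, s, \epsilon) := \frac{\int_A \M_0(\bar{v}_*)\,\d \bar{v}_*}{\int_A \left(|\bar{v} - \bar{v}_*|^2 + s^2\right)^{-1/2} \M_0(\bar{v}_*)\,\d \bar{v}_*}.
\]

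To estimate the denominator, the elementary bound $(|\bar{v} - \bar{v}_*|^2 + s^2)^{-1/2} \leq |\bar{v} - \bar{v}_*|^{-1}$ gives that it is bounded above by $K(\bar{v}) := \int_{\R^{d-1}} |\bar{v} - \bar{v}_*|^{-1} \M_0(\bar{v}_*)\,\d \bar{v}_*$, which is the convolution of a Schwartz function with the locally integrable kernel $|\cdot|^{-1}$ in dimension $d - 1 = 2$; it is continuous in $\bar{v}$ and decays at infinity, hence $K \leq K_\infty$ uniformly. For the numerator I would split into cases according to the radius $r := s\sqrt{\epsilon^{-2}-1}$ of the excluded ball $A^c = B(\bar{v}, r)$. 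If $r \leq r_0$, with $r_0$ fixed so small that $\pi r_0^2 \|\M_0\|_\infty \leq 1/2$, then $\int_A \M_0 \geq 1/2$ and $\mathcal{R} \geq 1/(2 K_\infty)$; if instead $r > r_0$, then on $A$ we have $\sqrt{|\bar{v} - \bar{v}_*|^2 + s^2} \geq r$, so the denominator is bounded by $r^{-1} \int_A \M_0$ and $\mathcal{R} \geq r > r_0$. In either case $\mathcal{R} \geq \min(1/(2 K_\infty), r_0)$, the sought uniform bound.

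The main obstacle is precisely obtaining $\epsilon$-uniformity: the naive bound coming only from the support constraint yields $\mathcal{R} \geq s/\epsilon$, which degenerates as $s \to 0$ with $\epsilon$ fixed. Splitting based on the ball radius $r = s\sqrt{\epsilon^{-2}-1}$ rather than on $s$ or $\epsilon$ alone is what resolves this: when $r$ is large the excluded ball is wide enough to make the denominator small via pointwise lower bounds on $|q|$, while when $r$ is small the set $A$ is essentially all of $\R^{d-1}$ and one recovers the finite positive limit $1/K(\bar{v})$ that governs the $s \to 0$ regime of the ordinary (non-grazing) hard-spheres case.
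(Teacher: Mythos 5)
Your argument is correct and follows essentially the same route as the paper's proof: after the $|q|$-cancellation one splits according to whether the excluded-ball radius $r=s\sqrt{\epsilon^{-2}-1}$ exceeds a fixed threshold (the paper takes the threshold equal to $1$), bounding the denominator by the convolution $\int_{\R^{2}}|\bar v-\bar v_*|^{-1}\M_0(\bar v_*)\,\d\bar v_*$ in the small-$r$ regime and using the pointwise bound $|q|^{-1}\leq r^{-1}$ in the large-$r$ regime. The only (harmless) slip is the normalization $\int_{\R^{2}}\M_0(\bar v_*)\,\d\bar v_*=(2\pi\theta)^{-1/2}\neq 1$, so your smallness condition on $r_0$ should read $\pi r_0^2\,\|\M_0\|_{L^\infty(\R^2)}\leq\tfrac12(2\pi\theta)^{-1/2}$, yielding $\int_A\M_0\geq\tfrac12(2\pi\theta)^{-1/2}$ instead of $\tfrac12$.
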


\begin{proof}
  As for Proposition \ref{prp:D-comparison-hardpotentials}, the proof
  consists in checking that Assumption \ref{convol} is met by the
  kernels $\beta(|q|)$ and $b_\epsilon(\xi).$ First, for a given
  $\epsilon > 0$, one sets
$$\delta_\epsilon:=\dfrac{\sqrt{1-\epsilon^2}}{\epsilon}.$$
For a given $s > 0$ and a given $\bar{v} \in \R^{2}$, the numerator of \eqref{convol} reads
\begin{equation*}
  \int_{\R^{2}}
  \beta\left(\sqrt{|\bar{v}-\bar{v}_*|^2+s^2}\right)
  b_\epsilon\left(\frac{s}{\sqrt{|\bar{v}-\bar{v}_*|^2+s^2}}\right)
  \, \M(\bar{v}_*) \, \d \bar{v}_*
  %\\
  =
  \frac{s}{\|b_\epsilon\|}\int_{\R^{2}}
  \1_{\{|\bar{v}-\bar{v}_*| \geq s\delta_\epsilon\} }\,
  \M(\bar{v}_*)\,\d \bar{v}_*
\end{equation*}
while the denominator is simply
$$\frac{s}{\|b_\epsilon\|}\int_{\R^{2}} \dfrac{\M(\bar{v}_*)}{\sqrt{|\bar{v}-\bar{v}_*|^2+ s^2}}\1_{\{|\bar{v}-\bar{v}_*| \geq s\delta_\epsilon\}}\,\,\d \bar{v}_*.$$
Hence to prove \eqref{convol}, it suffices clearly to show that there
exists $C >0$ such that
$$\int_{\{|\bar{v}-\bar{v}_*| \geq s\delta_\epsilon\}} \M(\bar{v}_*)\,\d \bar{v}_* \geq C\int_{\{|\bar{v}-\bar{v}_*| \geq s\delta_\epsilon\}} \dfrac{\M(\bar{v}_*)}{\sqrt{|\bar{v}-\bar{v}_*|^2+s^2}}\,\d \bar{v}_*$$
for any $\bar{v} \in \R^{2}$ and any $s > 0.$  If $s\delta_\epsilon \geq 1 $ we can directly
  bound $|\bar{v}-\bar{v}_*| \geq 1$ and the inequality is obviously
  true with $C = 1$. If $s\delta_\epsilon < 1$ we have
  \begin{equation*}
 \int_{\{|\bar{v}-\bar{v}_*| \geq s\delta_\epsilon\}} \M(\bar{v}_*)\,\d \bar{v}_* \geq
    \int_{|\bar{v}_*-\bar{u}_0| \geq 1}
    \M(\bar{v}_*) \d \bar{v}_*
    =: C_1 > 0
  \end{equation*}
  while
  \begin{equation*}
    \int_{\{|\bar{v}-\bar{v}_*| \geq s\delta_\epsilon\}}
    \dfrac{\M(\bar{v}_*)}
    {\sqrt{|\bar{v}-\bar{v}_*|^2+s^2}}\,\d \bar{v}_*
    \leq
    \int_{\R^{2}} \dfrac{\M(\bar{v}_*)}{|\bar{v}-\bar{v}_*|}
    \,\d \bar{v}_* \leq C_2
    < \infty.
  \end{equation*}
  Therefore, when $s \delta_\epsilon < 1$, the result holds with $C=
  C_1/C_2$. This shows that \eqref{convol} holds true with
  $\tilde{C}_\theta = \max\{1,\frac{C_1}{C_2}\}$ independent of
  $\epsilon$ (and also of $\varrho_0$ in this case).
\end{proof}

Let us explain now, in a rather informal way, how the above result
together with Theorem \ref{theo:max} allows us to use
\eqref{entropydiss} to give a proof of the logarithmic Sobolev
inequality \eqref{J_gammaM-aim} for $\gamma = 0$ or $\gamma = 1$.

\medskip
\paragraph{\textbf{\textit{The case $\gamma=0$}}}

Since $\tilde{B}_\epsilon$ is a normalized Maxwellian collision
kernel, for the special choice of the convex function $\Phi(x)=x\log x
-x +1$, if we denote simply by $\D_{\mathrm{max},\epsilon}$ the
associated entropy dissipation functional, Theorem \ref{theo:max}
asserts that
$$\D_{\mathrm{max},\epsilon}(f) \geq \gamma_\epsilon \H(f|\M)$$
for any probability distribution $f \in L^1(\R^d)$ with $\gamma_\epsilon:=\gamma_{b_\epsilon}=\dfrac{\int_0^\epsilon \xi^3\left(1-\xi^2\right)^{\frac{d-3}{2}}\d\xi}{\int_0^\epsilon \xi\left(1-\xi^2\right)^{\frac{d-3}{2}}\d\xi}.$
In particular, in dimension $d=3$, one has
$$\gamma_\epsilon=\frac{\epsilon^2}{2}.$$
Therefore, in dimension $d=3$, the solution $h_\epsilon(t,\cdot)$ to
\eqref{eq:Leps} with $\gamma = 0$ satisfies
$$\H(h_\epsilon(t)|\M)
\leq \exp\left(-\frac{\epsilon^2}{2} t\right) \H(f_0|\M)
\qquad \forall t \geq 0$$
which, in terms of the rescaled function $f_\epsilon(t,\cdot)$ defined in \eqref{scaling} reads
$$\H(f_\epsilon(t)|\M) \leq \exp\left(-\frac{t}{2}  \right) \H(f_0|\M) \qquad \forall t \geq 0.$$
In particular, from Proposition \ref{prop:converge}, taking the limit as $\epsilon \to 0$, we get that
$$\H(f(t)|\M) \leq \exp\left(-\frac{t}{2}\right) \H(f_0|\M) \qquad \forall t \geq 0$$
for any solution $f(t)$ to the Fokker-Planck equation \eqref{FP1}
(associated to the diffusion matrix $\mathbf{D}_0(\cdot)$). We recover
in this way \eqref{entropyFP1} which, as well-known \citep{Arnold}, is
equivalent to \eqref{dis:entropyFP1}, i.e.,
\begin{equation*}
  \mathcal{J}_0(f|\M) \geq \frac12 \H(f|\M).
\end{equation*}
This shows that the log-Sobolev inequality \eqref{dis:entropyFP1} can
be recovered from \eqref{entropydiss} in the limit of grazing
collisions and, as explained in the introduction, this can be seen as
providing a microscopic ground for these particular log-Sobolev
inequalities.

\medskip
\paragraph{\textbf{\textit{The case $\gamma=1$}}}

In the same way, if one considers now grazing collisions for
hard-spheres interactions $B_\epsilon(|q|,\xi)$, the same reasoning
can be applied and using Proposition \ref{prp:D-comparison} we see
that any solution $h_\epsilon(t,\cdot)$ to \eqref{eq:Leps} satisfies
$$\H(h_\epsilon(t)|\M) \leq \exp\left(-C\frac{\epsilon^2}{2} t\right) \H(f_0|\M) \qquad \forall t \geq 0$$
for some explicitly computable constant $C > 0$. In terms of the rescaled function $f_\epsilon$ we get now
$$\H(f_\epsilon(t)|\M) \leq \exp\left(-\frac{C}{2} t\right)\H(f_0|\M) \qquad \forall t  \geq 0$$
which, at the limit $\epsilon \to 0$, yields now
$$\H(f(t)|\M) \leq \exp\left(-\frac{C}{2} t\right) \H(f_0|\M) \qquad \forall t \geq 0$$
for any solution $f(t)$ to the Fokker-Planck equation \eqref{FP1}
(associated to the diffusion matrix $\mathbf{D}_1(\cdot)$). In
particular, one deduces from this inequality that the functional
inequality
$$\mathcal{J}_1(f|\M) \geq \frac{C}{2}H(f|\M)$$
holds true for any probability density $f \in L^1(\R^3)$ with finite
entropy. Again, in the grazing collisions limit, Theorem \ref{main}
allows us to prove the nontrivial log-Sobolev inequality
\eqref{J_gammaM-aim} for $\gamma = 1$ --- with a not necessarily
optimal constant.

\appendix

\section{Proof of Theorem \ref{theo12}}
\label{sec:no-log-sob}

We give in this Appendix a quick proof of Theorem \ref{theo12}, which
states that the linear Boltzmann operator $\L$ does not satisfy the
log-Sobolev inequality \eqref{eq:log-sob-L}. That is, if we define the
quadratic form associated to $\L$ as
\begin{equation*}
  \mathscr{E} (g) := -\ird \frac{1}{\M(v)}\, g(v) \L g(v) \,d v,
\end{equation*}
we show it is not possible to find a positive $\lambda_0 > 0$ such
that
\begin{equation}
  \label{eq:log-sob-LApp}
  \mathscr{E}\left( \sqrt{\M} \sqrt{f} \right)
  \geq \lambda_0 \H(f|\M)
\end{equation}
for any probability density $f \in L^1(\R^d,\d v)$. The proof is based
on the well-known fact that \eqref{eq:log-sob-LApp} is equivalent to
Nelson's hypercontractivity. In order to apply directly Nelson's hypercontractivity, one shall reformulate the problem in some equivalent way to define the Markov semigroup associated to $\L$. Namely, we introduce the probability measure
$$\d\mu(v)=\M(v)\d v$$
and the Markov operator
$$\mathbf{L}(h)=\M^{-1}\L(h\,\M), \qquad \forall h \in \mathscr{D}(\mathbf{L})$$
where $\mathscr{D}(\mathbf{L})$ denotes the domain of $\mathbf{L}$ in the space $L^{2}(\R^{d},\d\mu)$. Notice that $\mathbf{L}$ is a Markov operator in the sense of \cite{Bakry2014} since
$$\int_{\R^{d}}\mathbf{L}h(v)\d\mu(v)=0 \qquad \text{ and } \qquad \mathbf{L}(1)=0.$$
Notice that,  with such notations and using the terminology of \citet{Ane, Bakry2014}, it holds
$$\H(f|\M)=\mathbf{Ent}_{\mu}\left(fM^{-1}\right), \qquad \mathscr{E}(g)=-\int_{\R^{d}}\dfrac{g}{\M}\mathbf{L}\left(\dfrac{g}{\M}\right)\d\mu=:\mathcal{E}_{\mu}(gM^{-1})$$
so that \eqref{eq:log-sob-LApp} reads equivalently
$$\mathcal{E}_{\mu}(\sqrt{h}) \geq \lambda_{0}\mathbf{Ent}_{\mu}(h), \qquad h=f\M^{-1}$$
which is the classical Log-Sobolev inequality for the measure $\mu$
and the associated Dirichlet form $\mathcal{E}_{\mu}$ (see
\cite{Ane,Bakry2014} for details). Considering then the Markov
semigroup $(\mathcal{S}_t)_t$ generated by $\mathbf{L}$, we recall the
following result (see \citet[Theorem 2.8.2]{Ane}):
\begin{lemme}[\cite{Gross1975Logarithmic}] If
  \eqref{eq:log-sob-LApp} holds true with $\lambda_0 > 0$ then
  $$\|\mathcal{S}_t h\|_{L^{q(t)}(\R^d,\d\mu)} \leq \|h\|_{L^2(\R^d,\d\mu)} \qquad \forall t > 0$$
where $q(t)=1+\exp(4t/\lambda_0)$ for any $t \geq 0$ and $\d\mu(v)=\M(v)\d v$ is the invariant measure associated to $\L$.
\end{lemme}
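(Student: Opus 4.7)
The plan is to carry out Gross's classical differentiation argument for hypercontractivity, adapted to the Markov structure $(\mu,\mathbf{L},\mathcal{E}_\mu)$ introduced just before the statement. I would first reduce, by a density argument and positivity of $(\mathcal{S}_t)_{t\geq 0}$, to the case of a nonnegative $h$ bounded above and below, for which the manipulations below can be justified. Setting $u_t := \mathcal{S}_t h$ and
$$F(t) := \|u_t\|_{L^{q(t)}(\mu)} = \left(\int_{\R^d} u_t^{q(t)}\,\d\mu\right)^{1/q(t)}$$
for an increasing function $q\::\:[0,\infty)\to[2,\infty)$ with $q(0)=2$ to be determined, the claim $\|\mathcal{S}_t h\|_{L^{q(t)}(\mu)} \leq \|h\|_{L^2(\mu)}$ is equivalent to $F(t) \leq F(0)$, so it is enough to show $\ddt F(t) \leq 0$ for all $t\geq 0$.

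The key computation is
$$\ddt \int u_t^{q(t)}\,\d\mu = q'(t)\int u_t^{q(t)}\log u_t\,\d\mu + q(t)\int u_t^{q(t)-1}\mathbf{L} u_t\,\d\mu,$$
which, after writing $F(t)^{q(t)} = \exp(q(t)\log F(t))$ and rearranging, yields the identity
$$q(t)\, F(t)^{q(t)}\,\ddt \log F(t) = \frac{q'(t)}{q(t)}\,\mathbf{Ent}_\mu\!\big(u_t^{q(t)}\big) + q(t)\int u_t^{q(t)-1}\mathbf{L} u_t\,\d\mu.$$
Next I would bound both terms on the right by a single Dirichlet energy $\mathcal{E}_\mu(u_t^{q(t)/2})$. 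For the entropy term, the hypothesis \eqref{eq:log-sob-LApp}, rewritten as $\lambda_0\,\mathbf{Ent}_\mu(v^2) \leq \mathcal{E}_\mu(v)$ for nonnegative $v$, applied with $v = u_t^{q(t)/2}$, delivers the needed bound. For the generator term, I would invoke the Stroock--Varopoulos inequality for a symmetric Markov generator: for any $p \geq 2$ and any nonnegative $u$ in the domain of $\mathbf{L}$,
$$\int u^{p-1}\mathbf{L} u\,\d\mu \leq -\frac{4(p-1)}{p^2}\,\mathcal{E}_\mu\!\big(u^{p/2}\big).$$

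Combining these two estimates collapses the right-hand side into a single multiple of $\mathcal{E}_\mu(u_t^{q(t)/2})$, whose coefficient is an affine combination of $q'(t)$ and $q(t)-1$ with constants depending on $\lambda_0$. Choosing $q$ to saturate the resulting first-order ODE with $q(0)=2$ makes this coefficient identically zero and, after integration, yields the exponential profile $q(t) = 1 + \exp(4t/\lambda_0)$ stated in the lemma; with this choice, $\ddt \log F(t) \leq 0$ and the proof is complete.

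The main obstacle will be the Stroock--Varopoulos inequality above: for a diffusion generator it is immediate from the chain rule and integration by parts, but here $\mathbf{L}$ is a nonlocal jump-type operator, so the bound has to be derived from the pointwise convexity estimate
$$(a-b)\big(a^{p-1}-b^{p-1}\big) \geq \tfrac{4(p-1)}{p^2}\big(a^{p/2}-b^{p/2}\big)^2, \qquad a,b \geq 0,$$
inserted into the symmetric double-integral representation of $\mathcal{E}_\mu$ made available by the detailed balance relation \eqref{eq:detailed-balance}. A secondary technical point is the justification of the time differentiations for $u_t = \mathcal{S}_t h$ and of the logarithmic integrals appearing above; both can be handled by first working with strictly positive and bounded $h$ and then passing to the limit, exploiting the monotonicity of the quantities involved.
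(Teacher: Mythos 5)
Your argument is correct, but note that the paper does not actually prove this lemma: it is invoked as a black box, with a pointer to Gross's theorem as presented in An\'e et al.\ (Theorem 2.8.2 there), and the whole content of Appendix A is the \emph{converse} direction, namely showing that the hypercontractive conclusion fails for $\L$. So your proposal supplies a proof where the paper supplies a citation. What you write is the standard Gross differentiation argument, and the one place where it genuinely needs adaptation to this setting is exactly the place you single out: since $\mathbf{L}$ is a jump (non-local) operator, the identity $\int u^{p-1}\mathbf{L}u\,\d\mu=-\frac{4(p-1)}{p^2}\mathcal{E}_\mu(u^{p/2})$ valid for diffusions must be weakened to the Stroock--Varopoulos inequality, obtained from the pointwise bound $(a-b)(a^{p-1}-b^{p-1})\geq\frac{4(p-1)}{p^2}(a^{p/2}-b^{p/2})^2$ (a one-line Cauchy--Schwarz on $\int_b^a$) inserted into the symmetric double-integral form of $\mathcal{E}_\mu$ furnished by detailed balance \eqref{eq:detailed-balance}. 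Your differentiation identity for $q(t)F(t)^{q(t)}\ddt\log F(t)$ checks out, as does the application of \eqref{eq:log-sob-LApp} to $v=u_t^{q(t)/2}$, and the reduction to bounded, strictly positive $h$ (plus $|\mathcal{S}_t h|\leq\mathcal{S}_t|h|$ for signed data) is the right way to justify the formal manipulations.

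One bookkeeping point worth flagging: with the paper's normalization $\mathcal{E}_\mu(\sqrt{h})\geq\lambda_0\,\mathbf{Ent}_\mu(h)$, your collapsed coefficient vanishes when $q'=4\lambda_0(q-1)$, which integrates to $q(t)=1+\exp(4\lambda_0 t)$, not $q(t)=1+\exp(4t/\lambda_0)$; the latter corresponds to the convention in which $\lambda_0$ multiplies the Dirichlet form rather than the entropy. You have simply reproduced the exponent as stated in the lemma, and the discrepancy is immaterial for its only use in the paper (Theorem \ref{theo12} needs only \emph{some} $q(t)>2$ for \emph{some} $t>0$), but if you carry out the ODE step explicitly you should make the constant consistent with your own normalization.
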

\begin{nb} Notice that the above Lemma is valid for any Markov
  semigroup whose invariant measure is reversible. This is the case
  for the semigroup $(\mathcal{S}_t)_t$ associated to $\mathbf{L}$ by
  virtue of the detailed balance principle
  \eqref{eq:detailed-balance}.
\end{nb}
In particular, if \eqref{eq:log-sob-LApp} holds true, then, for $h_0$
in $L^2(\R^d,\d v)$ and for some $t > 0$, there exists $p > 2$ such
that
\begin{equation}
  \label{eq:regul}
  \|h(t)\|_{L^p(\R^d,\d\mu)} \leq \|h_0\|_{L^2(\R^d,\d\mu)}
\end{equation}
where $h(t)=\mathcal{S}_t h_0$ is the unique solution to $\partial_t
h(t) = \mathbf{L}(h)$ with initial condition $h_0$ (in other words,
$f(t) = M h(t)$ is the unique solution to
eq.~\eqref{eq:linear-Boltzmann} with initial data $M h_0$).

We show that such a $L^2-L^p$ regularizing property of
$(\mathcal{S}_t)_t$ cannot hold.  Using the representation
\eqref{eq:linear-Boltzmann-kernel-form}, one sees that
\begin{equation*} \L f(v) \geq -\sigma(v) f(v) \qquad
  \forall f \geq 0
\end{equation*}
where $\sigma(v)$ is the collision frequency (depending on the
collision kernel $B(|q|,\xi)$) \footnote{We recall that if
  $B(|q|,\xi)=|q\cdot n|$ (corresponding to hard-spheres interactions)
  then $\sigma(v) \geq c(1+|v|)$ for some $c > 0$; while if
  $B(|q|,\xi)=c_d \xi$ (corresponding to a normalized Maxwellian
  collision kernel) then $\sigma(v)=1$ for any $v \in \R^d.$}. This
translates obviously into
\begin{equation}
  \label{compareLsigma}
  \mathbf{L} h(v) \geq -\sigma(v) h(v) \qquad \forall h \geq 0.\end{equation}
Let us now consider $h_0 \in L^2(\R^d,\d\mu)$ nonnegative and let $h(t)=\mathcal{S}_t h_0$ while $g(t,v)$ denotes the unique solution to
$$\partial_t g(t,v)=-\sigma(v)g(t,v) \qquad  g(t=0,v)=h_0(v).$$
Clearly
\begin{equation}\label{eq:gt}g(t,v)=\exp(-\sigma(v) t)h_0(v)\end{equation}
and \eqref{compareLsigma} implies that
$$h(t,v) \geq g(t,v) \qquad \forall t \geq 0.$$
Now, it is clear from \eqref{eq:gt} that the above equation for
$g(t,v)$ has no regularizing effect; i.e., if
$h_0 \notin L^p(\R^d,\d\mu)$ then $g(t,v) \notin L^p(\R^d,\d\mu)$ for
any $t \geq 0.$ One deduces from this that, if
$h_0 \notin L^p(\R^d,\d\mu)$ then $h(t,v) \notin L^p
(\R^d,\d\mu)$.
Therefore, inequality \eqref{eq:regul} cannot hold true for any
$h_0 \in L^2(\R^d,\d\mu)$ and Gross' Theorem shows that
\eqref{eq:log-sob-LApp} cannot hold true.

\section{Bakry-\'Emery criterion for hard-spheres interactions}
\label{ApB}

We give here a direct proof of the logarithmic Sobolev inequality
\eqref{J_gammaM-aim} in the case $\gamma=1$ using the Bakry-C	mery
criterion (see \eqref{eq:BEcurva}).
We use the notation of Section \ref{sec:logS}. In the hard-spheres
case ($\gamma = 1$) the diffusion matrix of the associated
Fokker--Planck equation reads as
\begin{equation}
\mathbf{D}_1(v)=\frac{1}{8}\ir3 \Big[ |v-\vb|^3\, \mathbf{I} - |v - \vb| (v- \vb) \otimes (v -\vb) \Big] \M(\vb)\d\vb
\end{equation}
Since $\M(\vb) = \left( \frac{1}{2 \pi \theta} \right)^{3/2} \exp \left( - \frac{|\vb - u_0|^2}{2 \theta} \right)$, the diffusion matrix may be cast as
$$
\mathbf{D}_1(v) = \frac{\gamma^{3/2}}{8\, \pi^{3/2}}\, \ir3 \Big[ |w|^3\, \mathbf{I} - |w| w \otimes w \Big] \exp\left(-\gamma |w+a|^2\right)\d w
$$
where
$$
\gamma= \frac{1}{2 \theta}\,, \qquad \qquad a= u_0 - v\,.
$$
It can be directly checked (see~\cite{BisiSpigaJMP}) that
\begin{equation}
\dis \ir3 |w|^3\, {\rm e}^{-\gamma |w+a|^2} \d w = \frac{\pi}{\gamma^3} \left\{ {\rm e}^{- \gamma |a|^2} \left( \gamma |a|^2 + \frac52 \right)
+ \frac{\sqrt{\pi}\, {\rm erf} \left( \gamma^{1/2} |a| \right)}{\gamma^{1/2} |a|} \left( \gamma^2 |a|^4 + 3 \gamma |a|^2 + \frac34 \right) \right\}
\end{equation}
and, analogously,
\begin{multline*}
  \dis \ir3 |w| (w \otimes w) {\rm e}^{-\gamma |w+a|^2} \d w
  \\
  = \frac{\pi}{\gamma^3}\, {\bf I}\, \left\{ {\rm e}^{- \gamma |a|^2} \left( \frac12 + \frac{1}{4 \gamma |a|^2} \right)
    + \frac{\sqrt{\pi}\, {\rm erf} \left( \gamma^{1/2} |a|
      \right)}{\gamma^{1/2} |a|} \left( \frac12\, \gamma |a|^2 +
      \frac12 - \frac{1}{8\, \gamma |a|^2} \right) \right\}
  \\
  \dis + \frac{\pi}{\gamma^3}\, \frac{a \otimes a}{|a|^2} \left\{ {\rm e}^{- \gamma |a|^2} \left( \gamma |a|^2 + 1 - \frac{3}{4 \gamma |a|^2} \right)
    + \frac{\sqrt{\pi}\, {\rm erf} \left( \gamma^{1/2} |a| \right)}{\gamma^{1/2} |a|} \left( \gamma^2 |a|^4 + \frac34\, \gamma |a|^2 - \frac34 + \frac{3}{8\, \gamma |a|^2} \right) \right\}
\end{multline*}
where erf denotes the error function
$\mathrm{erf}(x)=\frac{2}{\sqrt{\pi}}\ds\int_{0}^{x} {\rm
  e}^{-t^{2}}\d t$, $x \geq 0$.  Consequently,
\begin{equation}
\mathbf{D}_1(v) = \frac{1}{8 \sqrt{\pi}\, \gamma^{3/2}}\, \left\{\mathrm{C}\left( \gamma^{1/2} |a| \right)\, {\bf I} +
\mathrm{T}\left( \gamma^{1/2} |a| \right) \frac{|a|^2 {\bf I} - a \otimes a}{|a|^2} \right\}
\label{D1comp}
\end{equation}
where
$$
\mathrm{C}(|x|) = {\rm e}^{-\, |x|^2} \left( 1 + \frac{1}{2 |x|^2} \right) + \frac{\sqrt{\pi}\, {\rm erf}(|x|)}{|x|} \left( \frac74\, |x|^2 + 1 + \frac{1}{4 |x|^2} \right)
$$
and
$$
\mathrm{T}(|x|) = {\rm e}^{-\, |x|^2} \left( |x|^2 + 1 - \frac{3}{4 |x|^2} \right) + \frac{\sqrt{\pi}\, {\rm erf}(|x|)}{|x|} \left( |x|^4 + \frac34\, |x|^2 - \frac34 + \frac{3}{8 |x|^2} \right).
$$
By resorting also to Taylor expansions
$$
{\rm e}^{-\, |x|^2} = 1 - |x|^2 + \frac{\xi^4}{2}\,, \qquad \quad
{\rm erf}(|x|) = \frac{2}{\sqrt{\pi}} \left( |x| - \frac{|x|^3}{3} + \frac{\eta^5}{10} \right)
$$
(for suitable $\xi, \eta \in [0,|x|]$), it can be checked that
$\mathrm{C}(|x|)>0$ and $\mathrm{T} (|x|) \geq 0$ for any~$x$.
Consequently, the matrix $T \left( \gamma^{1/2} |a| \right)
\frac{|a|^2 {\bf I} - a \otimes a}{|a|^2}$ appearing in~
\eqref{D1comp} is positive definite and to derive an estimate like
\eqref{frakI} one can neglect its contribution and consider only the
diffusion matrix
$$\frac{1}{8 \sqrt{\pi}\, \gamma^{3/2}}\, d(v)\mathbf{I}:= \frac{1}{8
  \sqrt{\pi}\, \gamma^{3/2}}\, \mathrm{C}\left( \gamma^{1/2} |a|
\right) {\bf I}.$$ In this case, the Bakry-\'Emery curvature condition
\eqref{eq:BEcurva} simply reads, writing $E(v) :=
\frac{|v-u_0|^2}{2\theta}$, (see \cite{Bakry, Arnold}):
\begin{multline}\label{eq:BEcurvaIde}
  -\frac{1}{4} \dfrac{\nabla d(v)\otimes \nabla d(v)}{d(v)}
  + \frac{1}{2}\left(\Delta d(v)
    - \nabla d(v) \cdot \nabla E(v)\right)\mathbf{I}
  \\
  + d(v)\mathbb{D}^2 E(v)
  + \dfrac{\nabla d(v) \otimes \nabla E(v)
    + \nabla E(v) \otimes \nabla d(v)}{2}
  -\mathbb{D}^2 d(v)
  \geq \alpha\, 8 \sqrt{\pi}\, \gamma^{3/2}\, \mathbf{I}
\end{multline}
(in the sense of positive matrices) for any $v \in \R^3$, where we
recall that $\mathbb{D}^2$ denotes the Hessian matrix while
$E(v)=\frac{|v-u_0|^2}{2 \theta}$.  Since
$$
\begin{array}{c}
\dis \nabla d(v) = \mathrm{C}'\left( \gamma^{1/2} |a| \right) \gamma^{1/2} \frac{v-u_0}{|v-u_0|}\,, \qquad \qquad
\Delta d(v) = \mathrm{C}''\left( \gamma^{1/2} |a| \right) \gamma\,, \vspace*{0.3 cm}\\
\dis \mathbb{D}^2 d(v) =\mathrm{C}''\left( \gamma^{1/2} |a| \right) \gamma\, \frac{(v-u_0) \otimes (v-u_0)}{|v-u_0|^2} + \mathrm{C}'\left( \gamma^{1/2} |a| \right) \gamma^{1/2} \left[ \frac{1}{|v-u_0|}\, {\bf I} -  \frac{(v-u_0) \otimes (v-u_0)}{|v-u_0|^3} \right],
\end{array}
$$
formula (\ref{eq:BEcurvaIde}) becomes
\begin{equation}
  A\left(
    \gamma^{1/2} |a| \right)\, {\bf I} + B\left( \gamma^{1/2} |a|
  \right)
  \sum_{i,j=1}^3 \frac{y_i y_j}{|y|^2}\,
  \frac{(v_i - u_{0i}) (v_j - u_{0j})}{|v-u_0|^2}
  \geq \alpha\, 8 \sqrt{\pi\, \gamma}
\end{equation}
for $v \in \R^3$, $y \in \R^3 \setminus \{ 0 \}$, where
\begin{gather*}
  A(|x|)
  = \frac12\, \mathrm{C}''(|x|)
  - \left( |x| + \frac{1}{|x|} \right) \mathrm{C}'(|x|)
  + 2\, \mathrm{C}(|x|)\,,
  \\
  B(|x|) = \mathrm{C}''(|x|) - \left( 2\, |x| + \frac{1}{|x|} \right)
  \mathrm{C}'(|x|)
  + \frac14\, \frac{\left[\mathrm{C}'(|x|) \right]^2}{\mathrm{C}(|x|)}.
\end{gather*}
Since
$$
0 \leq \sum_{i,j=1}^3 \frac{y_i y_j}{|y|^2}\, \frac{(v_i - u_{0i}) (v_j - u_{0j})}{|v-u_0|^2} \leq 1\,,
$$
a (non--optimal) estimate for $\alpha$ is provided by $\alpha = \frac{1}{8 \sqrt{\pi\, \gamma}} \min \{ \alpha_1, \alpha_2 \}$ with $\alpha_1$, $\alpha_2$ such that
\begin{equation}
A(|x|) \geq \alpha_1\,, \qquad \quad A(|x|) - B(|x|) \geq \alpha_2\,, \qquad \quad \forall x \in \R^3.
\end{equation}
Now we have
$$
A(|x|) = {\rm e}^{-\, |x|^2} \left( -\,3\,|x|^2 + 1 + \frac{3}{2 |x|^2} + \frac{9}{2 |x|^4} \right) + \frac{\sqrt{\pi}\, {\rm erf}(|x|)}{|x|} \left( \frac74\, |x|^2 + \frac54 + \frac{3}{4 |x|^2} - \frac{9}{4 |x|^4} \right);
$$
by using Taylor expansion (for $|x| \leq 1$) and by studying the derivative $A'(|x|)$ (for higher~$|x|$) we get that $A(|x|) \geq \alpha_1= \frac{143}{60} \simeq 2.38$. On the other hand, for $A(|x|) - B(|x|)$ we use the estimate
$$
\left| \frac{\mathrm{C}'(|x|)}{\mathrm{C}(|x|)} \right| \leq \min \left\{ 1, \frac{1}{|x|} \right\}\,,
$$
and we check that for $|x| > 1$ the quantity $A(|x|) - B(|x|)$ turns out be bounded from below by a constant greater than~$\alpha_1$ (so we skip details here) while, for $|x| \leq 1$,
$$
A(|x|) - B(|x|) \geq {\rm e}^{-\, |x|^2} \left( 3\,|x|^2 + \frac54 - \frac{5}{8 |x|^2} - \frac{3}{|x|^4} \right) + \frac{\sqrt{\pi}\, {\rm erf}(|x|)}{|x|} \left( \frac{41}{16}\, |x|^2 + \frac34 + \frac{11}{16 |x|^2} + \frac{3}{2 |x|^4} \right)
$$
that by Taylor expansion turns out to be greater than $\alpha_2= \frac73$. In conclusion, $\alpha = \frac{7}{24} \sqrt{\frac{2 \theta}{\pi}}$.

\subsection*{Acknowledgments} 
M.~B.~acknowledges support of Italian GNFM and of the University of
Parma.  J.~A.~C.~was supported by the Marie-Curie CIG grant KineticCF
and the Spanish project MTM2011-27739-C04-02. B.~L.~acknowledges
support of the \emph{de Castro Statistics Initiative}, Collegio
C. Alberto, Moncalieri, Italy.

%%%%%%%%%%%%%%%%%%%%%%%%%%%%%%%%%%%%%%%%%%%%%%%%%%%%%%%%%%%%%%%%%%%%%%%%%%%

% \bibliographystyle{plainnat-linked-initials}
% \bibliography{2014-04-bibliography}

\end{document}